\documentclass[a4paper,reqno]{amsart}

\parindent=15pt
\parskip=3pt
\setlength{\textwidth}{7in}
\setlength{\oddsidemargin}{-24pt}
\setlength{\evensidemargin}{-24pt}
\setlength{\textheight}{9.2in}
\setlength{\topmargin}{-5pt}

\usepackage{mathrsfs}
\usepackage{amsfonts}
\usepackage{amssymb}
\usepackage{amsmath}
\usepackage{graphicx}

\usepackage{amsthm}
\usepackage{amscd}
\usepackage[all,2cell]{xy}
\usepackage[usenames]{color}
\usepackage{hyperref}\usepackage{comment}
\UseAllTwocells \SilentMatrices


\numberwithin{equation}{section}

\theoremstyle{plain}
\newtheorem{thm}{Theorem}[section]
\newtheorem{prop}[thm]{Proposition}
\newtheorem{cor}[thm]{Corollary}
\newtheorem{lem}[thm]{Lemma}

\theoremstyle{definition}
\newtheorem{defi}[thm]{Definition}
\newtheorem{exm}[thm]{Example}
\theoremstyle{remark}
\newtheorem{rmk}[thm]{\bf Remark}
\newcommand{\D}{\mathbf{D}}
\newcommand{\K}{\mathbf{K}}
\newcommand{\ac}{\mathrm{ac}}

\def\Z{\mathbb{Z}}

\def\p{\partial}
\def\Ke{{\rm Ker}}
\def\Im{{\rm Im}}

\def\La{\Lambda}
\def\I{\mathcal{I}}
\def\Bb{\mathbf{B}}

\def\s{\sharp}

\def\c{\circ}
\def\d{\delta}

\def\aa{\alpha}
\def\b{\beta}
\def\z{\zeta}
\def\o{\omega}
\def\g{\gamma}
\def\bu{\bullet}
\def\lra{\longrightarrow}
\def\xra{\xrightarrow[]{}}

\newcommand{\G}{\Gamma}
\def\E{\mathcal{E}}
\renewcommand{\-}{\mbox{-}}
\newcommand{\End}{\mathrm{End}}
\newcommand{\Mod}{\mathrm{Mod}}
\newcommand{\Hom}{\mathrm{Hom}}
\newcommand{\Gr}{\mathrm{Gr}}
\newcommand{\Inj}{\mathrm{Inj}}
\newcommand{\rad}{\mathrm{rad}}

\newcommand{\shift}{\mathrm{shift}}
\renewcommand{\mod}{\mathrm{mod}}
\begin{document}

\title{The injective Leavitt complex}

\author{Huanhuan Li}

\subjclass[2010]{16G20, 16E45, 18E30, 18G35.}

\keywords{injective Leavitt complex, compact generator, Leavitt path algebra, dg quasi-balanced module}

\date{\today}

\begin{abstract}For a finite quiver $Q$ without sinks, we consider the
corresponding finite dimensional algebra $A$ with radical square zero. We construct
an explicit compact generator for the homotopy category of acyclic complexes
of injective $A$-modules. We call such a generator the injective Leavitt complex of $Q$.
This terminology is justified by the following result: the differential graded endomorphism
algebra of the injective Leavitt complex of $Q$ is quasi-isomorphic to the Leavitt path algebra
of $Q$. Here, the Leavitt path algebra is naturally $\Z$-graded and viewed as a differential graded algebra with trivial
differential.
\end{abstract}

\maketitle

Let $A$ be a finite dimensional algebra over a field $k$. The homotopy category
${\mathbf{K}_{\rm ac}}(A\-\Inj)$ of acyclic complexes of injective
$A$-modules is called the stable derived category of $A$ in \cite{kr}. This category is
a compactly generated triangulated category such that its subcategory of compact objects is triangle equivalent to the singularity category \cite{bu,o}
of $A$.

In general, it seems very difficult to give an explicit compact generator for the stable derived category of an algebra. In this paper, we construct an explicit compact generator for
the homotopy category $\K_{\rm ac}(A\-\Inj)$, in the case that the algebra $A$ is with radical square zero.
The compact generator is called the \emph{injective Leavitt complex}. This terminology is justified by the following result: the differential graded endomorphism
algebra of the injective Leavitt complex is quasi-isomorphic to the Leavitt path algebra, which was introduced in \cite{ap, amp} as an algebraisation of graph $C^*$-algebras \cite{kprr,raeburn} and in particular Cuntz--Krieger algebras \cite{cuntzkrieger}. Here, the Leavitt path algebra is naturally $\Z$-graded and viewed as a differential graded algebra with trivial
differential.

Let $Q$ be a finite quiver without sinks. Set
$A=kQ/J^{2}$ to be the corresponding finite dimensional algebra
with radical square zero.
We introduce the injective Leavitt complex $\I^{\bullet}$ in Definition \ref{definj}, which is an acyclic complex of injective $A$-modules; see Proposition \ref{propacy}.

We denote by $L_{k}(Q)$ the Leavitt path algebra of $Q$ over $k$.
Recall that $L_{k}(Q)$
is a naturally $\Z$-graded algebra, which is viewed as a differential graded
algebra with trivial differential.

The main result of this paper is as follows, which combines Theorem \ref{tc} with
Theorem \ref{rightq}.

\vskip 5pt

\noindent $\mathbf{Theorem ~I.}$ \emph{Let $Q$ be a finite quiver without sinks and $A=kQ/J^{2}$ the corresponding algebra with radical square zero.}

\begin{enumerate}
\item[(1)] \emph{The injective Leavitt complex $\mathcal{I}^{\bullet}$ of $Q$ is a
compact generator for the homotopy category $\K_{\rm ac}(A\-\Inj)$.}

\item[(2)] \emph{The differential graded endomorphism algebra of the injective Leavitt complex $\I^{\bu}$ is quasi-isomorphic to the Leavitt path algebra $L_k(Q)$.\hfill $\square$}\end{enumerate}

\vskip 5pt

This result is inspired by \cite[Theorem 6.1]{cy}, which describes the homotopy category
$\K_{\rm ac}(A\-\Inj)$ in terms of Leavitt path algebras. We mention that \cite[Theorem 6.1]{cy} extends \cite[Theorem 7.2]{s}; compare \cite[Theorem 3.8]{c1}.
The construction of the injective Leavitt complex is inspired by the basis of the Leavitt path algebra given by \cite[Theorem 1]{aajz}.

For the proof of $(1)$ in Theorem~I, we construct an explicit filtration of subcomplexes of the injective Leavitt complex $\I^{\bu}$. For $(2)$, we actually prove that $\I^{\bu}$ has the structure of a
differential graded $A$-$L_k(Q)^{\rm op}$-bimodule, which is right quasi-balanced.
Here, we consider $A$ as a differential graded algebra concentrated on degree zero, and $L_k(Q)^{\rm op}$
is the opposite algebra of $L_k(Q)$, which is naturally $\Z$-graded and viewed as a differential graded algebra with trivial differential.

The paper is structured as follows.
In section \ref{stwo}, we introduce the
injective Leavitt complex $\I^{\bu}$ of $Q$ and prove that it is an acyclic complex of injective $A$-modules. In section \ref{sthree}, we
prove that $\I^{\bu}$ is a compact generator for
the homotopy category ${\K_{\ac}}(A\-\Inj)$.
In section \ref{sfour}, we endow $\I^{\bu}$ with a differential graded module structure over the corresponding Leavitt path algebra $L_k(Q)$.
In section \ref{sfifth}, we prove that the differential graded endomorphism algebra of $\I^{\bu}$, as a complex of $A$-modules, is quasi-isomorphic to the Leavitt path algebra.

\section{The injective Leavitt complex of a finite quiver without sinks}
\label{stwo}

In this section, we introduce the injective Leavitt complex of a finite quiver without sinks, which is an acyclic complex of injective modules over the corresponding finite dimensional algebra with radical square zero.

\subsection{The injective Leavitt complex}

Recall that a quiver $Q=(Q_{0}, Q_{1}; s, t)$
consists of a set $Q_{0}$ of vertices, a set $Q_{1}$ of arrows and
two maps $s, t: Q_{1}\xrightarrow []{}Q_{0}$,
which associate to each arrow $\alpha$ its starting vertex $s(\alpha)$ and its terminating
vertex $t(\alpha)$, respectively. A quiver $Q$
is finite if both the sets $Q_{0}$ and $Q_{1}$ are finite.

A path in the quiver $Q$
is a sequence $p=\alpha_{n}\cdots\alpha_{2}\alpha_{1}$ of arrows with
$t(\alpha_{j})=s(\alpha_{j+1})$ for $1\leq j\leq n-1$.
The length of $p$, denoted by $l(p)$, is $n$. The starting vertex of $p$, denoted by $s(p)$,
is $s(\alpha_{1})$.
The terminating vertex of $p$, denoted by $t(p)$, is $t(\alpha_{n})$.
We identify an
arrow with a path of length one. We associate to each
vertex $i\in Q_{0}$ a trivial path $e_{i}$ of length zero. Set $s(e_i)=i=t(e_i)$.
Denote by $Q_{n}$ the set of all paths in $Q$ of length $n$ for each $n\geq 0$.

Recall that a vertex of $Q$ is a sink if there is no
arrow starting at it.
For any vertex $i$ which is not a sink, fix an arrow $\gamma$
with $s(\gamma)=i$. We call the fixed arrow the \emph{special arrow} starting at $i$.
For a special arrow $\alpha$,
we set \begin{equation} \label{eq:vv}
S(\alpha)=\{\beta\in Q_{1}\;| \; s(\beta)=s(\alpha), \beta\neq\alpha\}.
\end{equation}
We mention that the terminology ``special arrow" is taken from \cite{aajz}.

The following notion is inspired by \cite[Theorem 1]{aajz}, which describes a basis for Leavitt path algebra; also see Lemma $\ref{lbasis}$.

\begin{defi}\label{da}
For two paths $p=\alpha_{m}\cdots\alpha_{1}$ and $q=\beta_{n}\cdots\beta_{1}$ in $Q$
with $m,n\geq 1$,
we call the pair $(p, q)$ an \emph{admissible pair} in $Q$ if $t(p)=t(q)$, and either
$\alpha_{m}\neq \beta_{n}$, or $\alpha_{m}=\beta_{n}$ is not special.
For each path $r$ in $Q$, we define two additional \emph{admissible pairs} $(r, e_{t(r)})$ and $(e_{t(r)}, r)$ in $Q$. \hfill $\square$
\end{defi}

For each vertex $i\in Q_0$ and $l\in \Z$, set
\begin{equation}
\mathbf{B}^{l}_{i}
=\{(p, q)\;|\;(p, q) \text{~is an admissible pair~} \text{with~} l(q)-l(p)=l\text{~and~}s(q)=i\}.\label{p}
\end{equation}

We recall that an arrow $\alpha$ in $Q$ is a loop if it satisfies $s(\alpha)=t(\alpha)$.
For a rational number $x$, denote by $\lfloor x \rfloor$ the maximal integer which is not greater than $x$.

\begin{lem} Let $Q$ be a finite quiver without sinks. The above set $\mathbf{B}^{l}_{i}$
is not empty for each vertex $i$ and each integer $l$.
\end{lem}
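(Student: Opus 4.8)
The plan is to treat the cases $l \geq 0$ and $l < 0$ separately, after recording two elementary consequences of $Q$ being finite and without sinks. First, starting from any vertex one can always prolong a path by one more arrow, so there is a path of every length $n \geq 0$ beginning at any prescribed vertex. Second, any forward path must eventually repeat a vertex, so every vertex admits a path into some oriented cycle. These are the only structural inputs I expect to need.

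For $l \geq 0$ the statement is immediate from the additional admissible pairs. First I would pick a path $r$ of length $l$ starting at $i$ (taking $r = e_i$ when $l = 0$); such an $r$ exists by the first fact. Then $(e_{t(r)}, r)$ is an admissible pair with $s(r) = i$ and $l(r) - l(e_{t(r)}) = l$, so it lies in $\mathbf{B}^{l}_{i}$.

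The case $l < 0$, with $n := -l \geq 1$, is the substantive one, and here I would split according to whether $i$ lies on an oriented cycle. If $i$ lies on a cycle, then walking backwards along that cycle produces a path $p$ of length $n$ ending at $i$; the additional pair $(p, e_i) = (p, e_{t(p)})$ is then admissible, has $s(e_i) = i$, and satisfies $l(e_i) - l(p) = -n = l$, so it lies in $\mathbf{B}^{l}_{i}$. If $i$ lies on no cycle, I would choose a forward path $\pi$ from $i$ reaching a cycle $C$ and let $w$ be the first vertex of $C$ met by $\pi$; write $\mu$ for the last arrow of $\pi$, which enters $w$, and $\epsilon$ for the arrow of $C$ entering $w$. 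Setting $q := \pi$, of length $a := l(\pi) \geq 1$, and taking $p$ to be the length-$(a+n)$ path into $w$ obtained by walking backwards along $C$, whose terminating arrow is $\epsilon$, one obtains $t(p) = w = t(q)$, $s(q) = i$, and $l(q) - l(p) = -n = l$.

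The hard part is verifying that this last pair $(p,q)$ is genuinely admissible, that is, that the terminating-arrow condition of Definition \ref{da} holds. The key point I would stress is that $\mu \neq \epsilon$: the source of $\mu$ is the vertex of $\pi$ immediately preceding $w$, which does not lie on $C$ because $w$ is the first cycle vertex on $\pi$, whereas the source of $\epsilon$ lies on $C$. Hence $p$ and $q$ terminate in distinct arrows and $(p,q)$ is admissible by the first alternative in Definition \ref{da}, with no need to examine whether the arrows are special. In this way the potentially delicate special-arrow clause is sidestepped entirely: when $i$ lies on a cycle it is avoided by using an additional pair, and in the remaining case it is avoided by arranging that the two terminating arrows differ.
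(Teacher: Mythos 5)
Your proof is correct, and for the substantive case $l<0$ it takes a genuinely different route from the paper's. The case $l\geq 0$ is identical (the pair $(e_{t(q)},q)$ for a length-$l$ path $q$ out of $i$). For $l<0$ the paper splits according to whether some path from $i$ reaches a vertex carrying a \emph{loop}: if so it pairs a power of that loop with the access path, and if not it extracts an oriented cycle $c$ from a walk with pairwise distinct arrows $\aa_1,\dots,\aa_{n-1}$ and builds $p=c^{s}\aa_{n-1}\cdots\aa_{n-t}$ with $s=\lfloor\frac{m-l-1}{n-m}\rfloor$, verifying admissibility from the distinctness of $\aa_{m-1}$ and $\aa_{n-1}$. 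You instead split on whether $i$ itself lies on an oriented cycle. When it does, the additional admissible pair $(p,e_i)$ with $p$ a backward walk of length $-l$ around the cycle settles the matter with no admissibility check at all; when it does not, truncating an access path $\pi$ at the first cycle vertex $w$ and winding backwards along the cycle gives a pair whose two terminating arrows $\mu$ and $\epsilon$ have different sources ($s(\mu)\notin C$ while $s(\epsilon)\in C$), so admissibility follows from the first alternative of Definition \ref{da}. Both arguments sidestep the special-arrow clause, but yours does so more transparently and avoids the floor-function bookkeeping; the paper's version, on the other hand, produces the pair in a form closer to the basis elements of \cite{aajz} that motivate the construction. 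The only point worth making fully explicit in your write-up is that in the second subcase $i\notin C$ forces $l(\pi)\geq 1$, so the vertex of $\pi$ immediately preceding $w$ really exists --- but you have in effect already said this.
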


\begin{proof} Assume first that $l\geq 0$. Since $Q$ has no sinks,
there exists a path $q$ of length $l$
starting at $i$. Then we have $(e_{t(q)}, q)\in \mathbf{B}^{l}_{i}$.

We now consider the case $l<0$. For each vertex $i\in Q_0$, there are two subcases. The first subcase is that there exists a path $\alpha q$ such that
$s(q)=i$ and that $\alpha$ is a loop. We may assume that $q$ does not end with the loop $\aa$. Set $p$ to be the $(l(q)-l)$-th power of $\aa$. Then we have $(p, q)\in\Bb^l_i$.

The second subcase is that for the vertex $i$ there exists no path $\alpha q$ such that
$s(q)=i$ and that $\alpha$ is a loop. There exists a path
$\alpha_{n}\cdots\aa_2\alpha_{1}$ starting at $i$ such that the arrows $\aa_1,\aa_2, \cdots, \aa_{n-1}$
are pairwise distinct and that $\aa_n=\aa_m$ for some $1\leq m<n$.
We observe that $m<n-1$, since $\aa_n$ is not a loop.
Set $c=\alpha_{n-1}\cdots\aa_{m+1}\alpha_{m}$, $s=\lfloor\frac{m-l-1}{n-m} \rfloor$ and $t=m-l-1-s(n-m)$. Observe that $0\leq t<n-m$. Then we have $(c^{s}\alpha_{n-1}\cdots\alpha_{n-t}, \alpha_{m-1}\cdots\aa_2\alpha_{1})\in\Bb^l_i$. Here, if $t=0$, we understand $\alpha_{n-1}\cdots\alpha_{n-t}$ as $e_{t(\aa_{n-1})}$. If $m=1$, we understand $\alpha_{m-1}\cdots\aa_2\alpha_{1}$ as $e_i$.
\end{proof}

Let $k$ be a field and $Q$ be a finite quiver. For each $n\geq 0$, denote by $kQ_{n}$ the $k$-vector space with basis $Q_{n}$. The path algebra $kQ$ of the quiver $Q$ is defined as $kQ=\bigoplus_{n\geq 0}kQ_{n}$, whose multiplication is given as follows: for two paths $p$ and $q$, if $s(p)=t(q)$, then the product $pq$ is their concatenation;
otherwise, we set the product $pq$ to be zero. Here, we write the concatenation of paths
from right to left.

We observe that for any path $p$ and vertex $i$, $pe_{i}=\delta_{i, s(p)}p$ and
$e_ip=\delta_{i, t(p)}p$. Here, $\delta$ denotes the Kronecker symbol. It follows that the unit of
$kQ$ equals $\sum_{i\in Q_{0}}e_{i}$. Denote by $J$ the two-sided ideal of $kQ$ generated by arrows.

Consider the quotient algebra $A=kQ/J^{2}$;
it is a finite dimensional algebra with radical square zero. Indeed, $A=kQ_{0}\oplus kQ_{1}$
as a $k$-vector space and its Jacobson radical $\rad A=kQ_{1}$ satisfying $(\rad A)^{2}=0$.
For each vertex $i$ and arrow $\aa$, we still use $e_i$ and $\aa$ to denote their canonical images in $A$.

Denote by $I_{i}=D(e_{i}A)$ the injective left $A$-module for each $i\in Q_{0}$, where $(e_{i}A)_{A}$ is the
indecomposable projective right $A$-module and $D={\rm Hom}_{k}(-, k)$ denotes
the standard $k$-duality.
Denote by $\{e_{i}^{\s}\}\cup\{ \alpha^{\s}| \alpha\in Q_{1}, t(\alpha)=i\}$ the basis of
$I_i$, which is dual to the basis $\{e_{i}\}\cup\{
\alpha|\alpha\in Q_{1}, t(\alpha)=i\}$ of $e_iA$. We recall from \cite[I.2.9]{ass} that the left $A$-action on $I_i$ is as follows: for $j\in Q_{0}$ and $\beta\in Q_{1}$,
\begin{equation*}
{e_{j}}_{\cdot} e_{i}^{\s}=\delta_{i, j}e_{i}^{\s}; \;\beta_{\cdot} e_{i}^{\s}=0; \;{e_{j}}_{\cdot}\alpha^{\s}=\delta_{j, s(\alpha)}\alpha^{\s};\;\beta_{\cdot} \alpha^{\s}=\delta_{\alpha, \beta} e_{i}^{\s}.
\end{equation*}

We have the following direct observation.

\begin{lem} \label{a} Let $i, j$ be two vertices in $Q$, and $f: I_i\xrightarrow[]{} I_j$ be a $k$-linear map. Then $f$ is a left $A$-module morphism if and only if
$f(e_i^{\s})=\delta_{i, j}\lambda e_j^{\s}$ and $f(\aa^{\s})=\delta_{i, j}\lambda\aa^{\s}+
\delta_{j, s(\aa)}\mu(\aa) e_{j}^{\s}$
with $\lambda$ and $\mu(\aa)$ scalars for all $\alpha\in Q_1$
with $t(\aa)=i$.  \hfill $\square$
\end{lem}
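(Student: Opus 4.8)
The plan is to translate the module-morphism condition $f(a\cdot x)=a\cdot f(x)$ into explicit constraints on the scalars describing $f$ on the given basis of $I_i$, and then verify the stated formula accounts for all such $f$. First I would note that since $\{e_i^{\s}\}\cup\{\aa^{\s}\mid \aa\in Q_1, t(\aa)=i\}$ is a $k$-basis of $I_i$, the $k$-linear map $f$ is determined by its values on these basis elements, and each value can be written in the basis of $I_j$. The task is thus to determine exactly which choices of coefficients are compatible with $A$-linearity, the $A$-action being completely spelled out by the four displayed formulas for the action of $e_j$ and $\beta$.

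The key computations come in two families. For the ``if'' direction, I would simply take the proposed $f$ and check $f(a\cdot x)=a\cdot f(x)$ for $a$ ranging over the generators $e_j$ ($j\in Q_0$) and $\beta$ ($\beta\in Q_1$), and $x$ ranging over the basis of $I_i$; since these generate $A$ and span $I_i$, this suffices. For the ``only if'' direction, which is the substantive part, I would exploit the action relations to force the form of $f$. Applying $f$ to the identity $e_i\cdot e_i^{\s}=e_i^{\s}$ and using $e_j\cdot f(e_i^{\s})=f(e_i^{\s})$ shows $f(e_i^{\s})$ lies in the $e_i^{\s}$-eigenspace, giving $f(e_i^{\s})=\delta_{i,j}\lambda e_j^{\s}$. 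Similarly, testing against $e_{s(\aa)}\cdot\aa^{\s}=\aa^{\s}$ pins down which basis vectors of $I_j$ can appear in $f(\aa^{\s})$: the surviving terms are $e_j^{\s}$ (when $j=s(\aa)$) and those $\beta^{\s}$ with $s(\beta)=s(\aa)$. The crucial relation is the arrow action $\beta\cdot\aa^{\s}=\delta_{\aa,\beta}e_{t(\aa)}^{\s}$: applying $f$ to $\beta\cdot\aa^{\s}=\delta_{\aa,\beta}e_i^{\s}$ and comparing with $\beta\cdot f(\aa^{\s})$ forces the $\beta^{\s}$-coefficient of $f(\aa^{\s})$ to match the scalar $\lambda$ from $f(e_i^{\s})$ precisely when $\beta=\aa$ (hence $i=j$), which produces the $\delta_{i,j}\lambda\aa^{\s}$ term, while the remaining freedom is the scalar $\mu(\aa)$ multiplying $e_j^{\s}$ in the case $j=s(\aa)$.

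The main obstacle, such as it is, lies in bookkeeping the case distinctions arising from the Kronecker deltas, namely correlating the conditions $i=j$, $j=s(\aa)$, and $\aa=\beta$ so that no admissible coefficient is overlooked and no spurious one is introduced; in particular one must check that the scalar $\lambda$ appearing in $f(e_i^{\s})$ and in the $\aa^{\s}$-component of $f(\aa^{\s})$ is forced to be the same, which is exactly the content linking the two formulas. Once the action relations are applied systematically to each generator-basis pair, the stated characterization follows, and the verification is entirely a finite check over the generators of $A$ and the finitely many basis vectors of $I_i$.
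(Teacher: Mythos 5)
Your proposal is correct and is exactly the direct verification the paper has in mind (the paper states this lemma as an immediate observation and omits the proof). One small point to make explicit when you write it out: the relation $e_i\cdot e_i^{\s}=e_i^{\s}$ alone does not kill the arrow components $\gamma^{\s}$ with $s(\gamma)=i$ in $f(e_i^{\s})$; you also need $\beta\cdot e_i^{\s}=0$ for all $\beta\in Q_1$, which is covered by your stated plan of testing against every generator of $A$.
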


For a set $X$ and an $A$-module $M$, the coproduct $M^{(X)}$ will be
understood as
$\bigoplus_{x\in X}M\zeta_{x}$, where each component $M\zeta_{x}$ is $M$.
For an element $m\in M$, we use $m\zeta_{x}$
to denote the corresponding element in $M\zeta_{x}$.

For a path $p=\alpha_{n}\cdots\aa_2\alpha_{1}$ in $Q$ of length
$n\geq 2$, we denote by $\widehat{p}=\alpha_{n-1}\cdots\alpha_{1}$ and $\widetilde{p}=\alpha_{n}\cdots\alpha_{2}$ the two \emph{truncations} of
$p$. For an arrow $\aa$, denote by $\widehat{\aa}=e_{s(\aa)}$ and $\widetilde{\aa}=e_{t(\aa)}$.

\begin{defi} \label{definj} Let $Q$ be a finite quiver without sinks. The \emph{injective Leavitt complex} $\I^{\bullet}=(\I^{l}, \partial^{l})_{l\in \Z}$ of $Q$ is defined as follows:
\begin{enumerate}\item[(1)] the $l$-th component $\I^{l}=\bigoplus_{i\in Q_{0}}{I_{i}}^{(\Bb^l_i)}$;

\item[(2)] the differential $\partial^{l}:\I^{l}\xrightarrow[]{} \I^{l+1}$ is given by
$\partial^{l}(e_{i}^{\s}\zeta_{(p, q)})=0$ and
\begin{equation*}\partial^{l}(\alpha^{\s}\zeta_{(p, q)})=\begin{cases}
 e_{s(\alpha)}^{\s}\zeta_{(\widehat{p}, e_{s(\alpha)})}-\sum\limits_{\beta\in S(\alpha)} e_{s(\alpha)}^{\s}\zeta_{(\beta\widehat{p}, \beta)}, &
 \begin{matrix}\text{if~} q=e_{i}, ~~p=\alpha \widehat{p} \\\text{~and~} \alpha \text{~is special};\end{matrix}\\
 e_{s(\alpha)}^{\s}\zeta_{(p, q\alpha)},& \text{otherwise},
\end{cases}
\end{equation*}
for any $i\in Q_{0}$, $(p, q)\in \Bb^l_i$ and $\alpha\in Q_{1}$ with $t(\alpha)=i$. Here, the set $S(\alpha)$ is defined in $(\ref{eq:vv})$. \hfill $\square$
\end{enumerate}
\end{defi}

We observe that in the first case of defining $\partial^{l}(\alpha^{\s}\zeta_{(p, q)})$, we have that $l<0$ and that the pair $(p, q\aa)=(\aa \widehat{p}, \aa)$ is not admissible.

Each component $\I^l$ is an injective $A$-module. The differentials $\p^l$
are $A$-module morphisms; compare Lemma \ref{a}. It is direct to see
that $\p^{l+1}\circ\p^l$=0 for each $l\in\Z$. In summary, $\I^{\bu}$
is a complex of injective $A$-modules. 

The following fact is immediate.

\begin{lem} \label{partial}For each $i\in Q_{0}$, $l\in\Z$ and $(p, q)\in\Bb^l_i$,
the following statements hold.

\begin{enumerate}\item[(1)] If $q=e_{i}$, then $e_{i}^{\s}\zeta_{(p, e_{i})}=\sum_{\{\alpha\in Q_{1}\;|\; s(\alpha)=i\}}\partial^{l-1}(
\alpha^{\s}\zeta_{(\alpha p, e_{t(\alpha)})})$.

\item[(2)] If $q=\widetilde{q}\alpha$ with $\widetilde{q}$ the truncation of $q$, then
$e_{i}^{\s}\zeta_{(p, q)}=\partial^{l-1}(\alpha^{\s}\zeta_{(p, \widetilde{q})}).$
\hfill $\square$
\end{enumerate}
\end{lem}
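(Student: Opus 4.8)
The plan is to verify both identities by direct substitution into the formula for the differential in Definition~\ref{definj}; the entire content lies in checking that the admissible pairs appearing as indices are genuine elements of the relevant $\Bb^{\bu}_{\bu}$, and in selecting the correct branch of the two-case formula for $\p$. Throughout I will use that admissibility of a pair is controlled only by its terminating vertices and its two top arrows, so that the $\widehat{\phantom{p}}$- and $\widetilde{\phantom{p}}$-truncations preserve exactly the data I need.

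For $(2)$, write $q=\widetilde{q}\aa$, so that $\aa$ is the bottom arrow of $q$ and $t(\aa)=s(\widetilde{q})$. First I would record that $(p,\widetilde{q})$ is again admissible — its terminating vertex and top arrow agree with those of $(p,q)$, and when $\widetilde{q}=e_{t(\aa)}$ it is an additional admissible pair — and that $l(\widetilde{q})-l(p)=l-1$, so that $(p,\widetilde{q})\in\Bb^{l-1}_{t(\aa)}$ and $\aa^{\s}\zeta_{(p,\widetilde{q})}$ is a legitimate basis vector. Then I evaluate $\p^{l-1}(\aa^{\s}\zeta_{(p,\widetilde{q})})$. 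The key observation is that the first branch of the differential cannot occur here: it would require $\widetilde{q}=e_{t(\aa)}$, $p=\aa\widehat{p}$, and $\aa$ special, which together force $(p,q)=(p,\aa)$ to be a pair whose two top arrows both equal the \emph{special} arrow $\aa$ — contradicting the admissibility of $(p,q)$. Hence the ``otherwise'' branch applies and yields $e_{s(\aa)}^{\s}\zeta_{(p,\widetilde{q}\aa)}=e_i^{\s}\zeta_{(p,q)}$, as claimed.

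For $(1)$, with $q=e_i$, I would treat each arrow $\aa$ with $s(\aa)=i$ separately. Since $s(\aa)=i=t(p)$, the concatenation $\aa p$ is defined, $(\aa p,e_{t(\aa)})$ is an additional admissible pair lying in $\Bb^{l-1}_{t(\aa)}$, and $\aa$ is automatically the top arrow of $\aa p$ with $\widehat{\aa p}=p$. Consequently the first branch of $\p^{l-1}$ fires \emph{exactly} when $\aa$ is special. Letting $\g$ denote the unique special arrow at $i$, the term $\aa=\g$ contributes $e_i^{\s}\zeta_{(p,e_i)}-\sum_{\b\in S(\g)}e_i^{\s}\zeta_{(\b p,\b)}$, while each non-special $\aa$ contributes, via the ``otherwise'' branch, the single term $e_i^{\s}\zeta_{(\aa p,\aa)}$. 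The crucial bookkeeping step is that $S(\g)$ is \emph{precisely} the set of non-special arrows starting at $i$; hence the negative sum over $S(\g)$ cancels term-by-term against the contributions of the non-special arrows, leaving only $e_i^{\s}\zeta_{(p,e_i)}$.

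I do not expect a genuine obstacle — the statement is flagged as immediate — and the only places demanding care are the branch selection in $(2)$, where admissibility of $(p,q)$ is exactly what rules out the special branch, and the cancellation in $(1)$, where one must match the index set $S(\g)$ with the family of non-special arrows. A routine preliminary check that each displayed pair sits in the correct degree over the correct vertex (for instance $(\b p,\b),(\aa p,\aa)\in\Bb^{l}_{i}$) then completes the verification.
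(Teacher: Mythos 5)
Your verification is correct and is exactly the direct computation from Definition~\ref{definj} that the paper leaves implicit (the lemma is stated there with no proof, being flagged as immediate). Both key points check out: in (2) admissibility of $(p,q)$ rules out the special branch, and in (1) the identification of $S(\gamma)$ with the non-special arrows at $i$ gives the required cancellation.
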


\subsection{The acyclicity of the injective Leavitt complex}
We will show that the injective Leavitt complex is acyclic. We observe some lemmas on linear maps. For the convenience of the reader, we give full proofs.

In what follows,
$f: V\xrightarrow []{}V'$ is a $k$-linear map between two vector spaces $V$ and $V'$.
Suppose that $B$ and $B'$ are $k$-bases of $V$ and $V'$, respectively.

We say that
the triple $(f, B, B')$
satisfies \emph{Condition $\rm{(X)}$} if $f(B)\subseteq B'$ and the restriction of $f$ on $B$ is injective. The following observation is immediate.

\begin{lem} \label{zero} Assume that the triple $(f, B, B')$ satisfies Condition $\rm{(X)}$.
Then we have $\Ke f=0$. \hfill $\square$
\end{lem}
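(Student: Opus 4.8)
The plan is to prove that the triple $(f, B, B')$ satisfying Condition $\rm{(X)}$ forces $\Ke f = 0$ directly from the two defining properties, namely that $f$ carries the basis $B$ into the basis $B'$ and that $f$ restricted to $B$ is injective. First I would take an arbitrary element $v \in \Ke f$ and write it in terms of the basis $B$: since $B$ is a $k$-basis of $V$, there is a finite subset $\{b_1, \ldots, b_n\} \subseteq B$ of distinct basis vectors and nonzero scalars $\lambda_1, \ldots, \lambda_n \in k$ with $v = \sum_{j=1}^{n} \lambda_j b_j$. Applying $f$ and using $k$-linearity gives $0 = f(v) = \sum_{j=1}^{n} \lambda_j f(b_j)$.

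The key step is then to observe that the elements $f(b_1), \ldots, f(b_n)$ are distinct elements of $B'$: they lie in $B'$ because $f(B) \subseteq B'$, and they are pairwise distinct because the $b_j$ are distinct and $f$ is injective on $B$. Since $B'$ is a $k$-basis of $V'$, any finite collection of distinct elements of $B'$ is linearly independent over $k$. Therefore the relation $\sum_{j=1}^{n} \lambda_j f(b_j) = 0$ forces every coefficient $\lambda_j$ to vanish. Because I chose the $\lambda_j$ to be nonzero in the expansion of $v$, this is only possible if the sum was empty, i.e.\ $v = 0$. Hence $\Ke f = 0$.

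I expect no serious obstacle here: this is an elementary linear-algebra argument, and the statement is flagged as ``immediate'' in the excerpt. The only point requiring a small amount of care is the distinctness of the images $f(b_j)$, where injectivity of $f$ on $B$ is exactly what is needed, and the bookkeeping in the expansion of $v$ to ensure that one genuinely concludes $v = 0$ rather than merely that the coefficients vanish. These are routine to handle by keeping the chosen basis vectors distinct and the coefficients nonzero from the outset.
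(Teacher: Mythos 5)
Your argument is correct and is exactly the standard linear-algebra reasoning that the paper leaves implicit: Lemma \ref{zero} is stated without proof (marked ``immediate''), and your expansion of an element of $\Ke f$ in the basis $B$, followed by the observation that the images $f(b_j)$ are distinct elements of the basis $B'$ and hence linearly independent, is precisely the intended justification. There is nothing to add or correct.
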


We suppose further that there are disjoint unions $B=B_{0}\cup B_{1}\cup B_{2}$
and $B'=B'_{0}\cup B'_{1}$.
We say that the triple $(f, B, B')$
satisfies \emph{Condition \rm{(Y)}} if
the following statements hold:

\begin{enumerate}\item[(Y1)] $f(b)=0$ for each $b\in B_{0}$;

\item[(Y2)] $f(B_{1})\subseteq B_{1}'$ and $(f_{1}, B_1, B')$ satisfies Condition $\rm{(X)}$, where $f_{1}$ is the restriction of $f$ to the subspace spanned by $B_{1}$;

\item[(Y3)]  for each $b\in B_{2}$, we have 
\begin{equation} \label{condition3}f(b)=b_{0}-\sum_{y\in B_{1}(b)}f(y)
\end{equation} for some $b_{0}\in B_{0}'$ and some finite subset $B_1(b)\subseteq B_1$. Moreover, if $b\neq c$ in $B_2$, we have $b_{0}\neq c_{0}$.
\end{enumerate}

\begin{lem} \label{kerim1} Assume that $(f, B, B')$ satisfies Condition $\rm{(Y)}$.
Then $B_{0}$ is a $k$-basis of $\Ke f$ and $f(B_{1})\cup \{b_{0}\;|\; b\in B_2\}$ is a $k$-basis of $\Im f$.
\end{lem}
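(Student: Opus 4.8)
The plan is to exploit the observation that the distinguished spanning set produced for $\Im f$ actually sits inside the basis $B'$ of $V'$; this single fact makes the linear independence for the image automatic and simultaneously controls the kernel via a comparison of coefficients.

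First I would treat the image. Since $\Im f$ is spanned by $f(B)=f(B_{0})\cup f(B_{1})\cup f(B_{2})$, condition (Y1) kills $f(B_{0})$, and (Y3) rewrites each $f(b)$ with $b\in B_{2}$ as $b_{0}-\sum_{y\in B_{1}(b)}f(y)$; hence $\Im f$ is spanned by $f(B_{1})\cup\{b_{0}\mid b\in B_{2}\}$, and the same relation shows each $b_{0}$ indeed lies in $\Im f$. The crucial point is that this spanning set is a subset of $B'$ with no repetitions: by (Y2) and Condition (X) the elements $f(y)$, $y\in B_{1}$, are pairwise distinct and lie in $B_{1}'$, while by the ``moreover'' clause of (Y3) the elements $b_{0}$, $b\in B_{2}$, are pairwise distinct and lie in $B_{0}'$; since $B_{0}'\cap B_{1}'=\emptyset$, the whole collection consists of distinct basis vectors of $B'$. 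Being a subset of a basis, it is linearly independent, hence a basis of $\Im f$.

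Next I would handle the kernel. One has $B_{0}\subseteq\Ke f$ by (Y1), and $B_{0}$ is linearly independent as a subset of $B$, so it remains only to prove that $B_{0}$ spans $\Ke f$. Given $v\in\Ke f$, I would write $v=v_{0}+v_{1}+v_{2}$ according to $B=B_{0}\cup B_{1}\cup B_{2}$, say $v_{1}=\sum_{y}\lambda_{y}\,y$ and $v_{2}=\sum_{b}\mu_{b}\,b$. Applying $f$ and substituting the expressions from (Y1) and (Y3), the equation $f(v)=0$ becomes a linear combination of the distinct basis elements $\{b_{0}\}$ and $\{f(y)\}$ identified in the previous step. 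Comparing coefficients then forces $\mu_{b}=0$ for all $b$ (reading off the $B_{0}'$-components) and thereafter $\lambda_{y}=0$ for all $y$ (reading off the $B_{1}'$-components). Thus $v=v_{0}\in\mathrm{span}(B_{0})$, which proves $B_{0}$ is a basis of $\Ke f$.

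The only delicate point, and the linchpin of the argument, is the recognition that $f(B_{1})\cup\{b_{0}\mid b\in B_{2}\}$ is an honest repetition-free subset of $B'$; this one fact supplies the independence needed for the image and, through the same coefficient comparison, forces the vanishing of $v_{1}$ and $v_{2}$ in the kernel computation. Everything else is routine bookkeeping with the decomposition $B=B_{0}\cup B_{1}\cup B_{2}$.
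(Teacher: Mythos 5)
Your proposal is correct and follows essentially the same route as the paper's proof: both arguments rest on the facts that $f(B_1)\subseteq B_1'$ with $f|_{B_1}$ injective, that the $b_0$ are pairwise distinct elements of $B_0'$, and that $B_0'\cap B_1'=\emptyset$, so that comparing coefficients first in the $B_0'$-components and then in the $B_1'$-components kills the $B_2$- and $B_1$-parts of any kernel element. The only cosmetic difference is the order (you treat the image first), and your explicit remark that the spanning set of $\Im f$ is a repetition-free subset of $B'$ is exactly the point the paper uses implicitly.
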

\begin{proof} Let $x=\sum_{c\in B_{1}}\mu_{c}c+\sum_{b\in B_{2}}\nu_{b}b$
with $\mu_{c}$ and $\nu_{b}$ scalars satisfy $f(x)=0$. We claim that $x=0$.
In view of (Y1), the claim implies that $B_{0}$ is a $k$-basis of $\Ke f$.

We consider
\begin{align*}f(x)&=\sum_{c\in B_{1}}\mu_{c}f(c)+\sum_{b\in B_{2}}\nu_{b}(b_{0}-\sum_{y\in B_{1}(b)}f(y))\\
&=\sum_{b\in B_{2}}\nu_{b}b_{0}+(\sum_{c\in B_{1}}\mu_{c}f(c)-\sum_{b\in B_{2}}\sum_{y\in B_{1}(b)}\nu_b f(y))=0.\end{align*}
We notice that $b_0\in B_0'$, $f(c)\in B_1'$ and $f(y)\in B_1'$.
So we have $\sum_{b\in B_{2}}\nu_{b}b_{0}=0$. By (Y3), we infer that $\nu_{b}=0$ for each
$b\in B_2$. Then we observe that $\sum_{c\in B_{1}}\mu_{c}f(c)=0$.
We apply (Y2) and Lemma \ref{zero} for the triple $(f_1, B_1, B')$ to obtain $\sum_{c\in B_{1}}\mu_{c}c=0$,
proving that $x=0$.

Observe that $b_0$ belongs to $\Im f$ for each $b\in B_2$. By (Y1) and (Y3),
each element in $\Im f$ is a $k$-linear combination of elements from $f(B_1)\cup\{b_{0}| b\in B_2\}$. Recall that $f(B_1)\subseteq B_1'$ and $\{b_{0}| b\in B_2\}\subseteq B_0'$.
This proves that the set
$f(B_{1})\cup \{b_{0}| b\in B_2\}$ is a $k$-basis of $\Im f$.
\end{proof}

Let $f: V\xrightarrow[]{} V'$ be a $k$-linear map with $B$ and $B'$ bases of $V$ and $V'$,
respectively.
Suppose that there is a disjoint union $B=B_{0}\cup B_{1}\cup B_{2}$.
We say that the triple $(f, B, B')$
satisfies \emph{Condition $\rm{(Z)}$} if
the following statements hold:
\begin{enumerate}
\item[(Z1)] $f(b)=0$ for each $b\in B_{0}$;

\item[(Z2)] $f(B_{1})\subseteq B'$ and $(f_{1}, B_1, B')$ satisfies Condition $\rm{(X)}$, where $f_{1}$ is the restriction of $f$ to the subspace spanned by $B_{1}$;

\item[(Z3)] for each $b\in B_{2}$, we have $f(b)=-\sum_{y\in B_{1}(b)}f(y)$ for some
finite subset $B_1(b)\subseteq B_1$.
\end{enumerate}

\begin{lem} \label{kerim2} Assume that $(f, B, B')$ satisfies Condition $\rm{(Z)}$.
Then we have that $B_{0}\cup \{b+\sum_{y\in B_{1}(b)}y\;|\; b\in B_2\}$ is a $k$-basis of $\Ke f$ and that $f(B_{1})$ is a $k$-basis of $\Im f$.
\end{lem}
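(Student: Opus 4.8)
The plan is to follow the pattern of Lemma~\ref{kerim1}, relying throughout on the fact that $B_0$, $B_1$ and $B_2$ are disjoint subsets of the single basis $B$. Write $K=B_{0}\cup\{b+\sum_{y\in B_{1}(b)}y\mid b\in B_2\}$ for the proposed basis of $\Ke f$. First I would verify $K\subseteq\Ke f$: for $b\in B_0$ this is (Z1), and for $b\in B_2$ one computes $f\big(b+\sum_{y\in B_1(b)}y\big)=f(b)+\sum_{y\in B_1(b)}f(y)=0$ directly from (Z3).

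To see that $K$ is linearly independent, I would set a vanishing combination $\sum_{b\in B_0}\lambda_b b+\sum_{b\in B_2}\nu_b\big(b+\sum_{y\in B_1(b)}y\big)=0$. Since $B_1(b)\subseteq B_1$, every basis vector appearing here lies in exactly one of $B_0$, $B_1$, $B_2$; comparing the coefficients on $B_0$ gives $\lambda_b=0$, and comparing those on $B_2$ gives $\nu_b=0$. For spanning, I would take $x\in\Ke f$, expand it over $B=B_0\cup B_1\cup B_2$, and subtract its $B_2$-part, forming $x'=x-\sum_{b\in B_2}\nu_b\big(b+\sum_{y\in B_1(b)}y\big)$. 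The $B_2$-terms cancel, so $x'$ is supported on $B_0\cup B_1$, say $x'=\sum_{b\in B_0}\lambda_b b+\sum_{c\in B_1}\mu'_c c$, and $x'\in\Ke f$ because the subtracted vectors lie in $K\subseteq\Ke f$. Applying $f$ and using (Z1) reduces this to $\sum_{c\in B_1}\mu'_c f(c)=0$; now (Z2) together with Lemma~\ref{zero} applied to $(f_1,B_1,B')$ forces $\mu'_c=0$ for all $c$. Hence $x'$ lies in the $k$-span of $B_0$, so $x\in\mathrm{span}_k\,K$, and $K$ is a basis of $\Ke f$.

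For the image, $f(B_1)$ is linearly independent because (Z2) makes $f$ injective on $B_1$ with $f(B_1)\subseteq B'$, so $f(B_1)$ is a set of distinct basis vectors of $V'$. To see it spans $\Im f$, I would expand $f(v)$ for arbitrary $v\in V$ over $B$, discard the $B_0$-terms by (Z1), and rewrite each $B_2$-term via (Z3) as a combination of the $f(y)$ with $y\in B_1$; what remains is a $k$-combination of $\{f(c)\mid c\in B_1\}$. The whole argument is routine linear algebra, and there is no serious obstacle; the one step requiring care is the isolation of the $B_2$-contribution in the spanning part, precisely so that the leftover is controlled by the injectivity of Condition~(X) through Lemma~\ref{zero}.
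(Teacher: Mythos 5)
Your proposal is correct and follows essentially the same route as the paper's proof: you verify that $B_0$ and the elements $b+\sum_{y\in B_1(b)}y$ lie in $\Ke f$, reduce an arbitrary kernel element by subtracting its $B_2$-contribution, and then invoke Condition (Z2) together with Lemma \ref{zero} to kill the remaining $B_1$-part, while the statement about $\Im f$ follows from (Z1) and (Z3) exactly as in the paper. The only cosmetic difference is that you carry the $B_0$-component of $x$ explicitly through the spanning argument, whereas the paper disposes of it at the outset via (Z1); this changes nothing of substance.
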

\begin{proof} By (Z3), the element $b+\sum_{y\in B_{1}(b)}y$ lies in $\Ke f$ for each $b\in B_2$.
We observe that the set $B_{0}\cup \{b+\sum_{y\in B_{1}(b)}y\;|\; b\in B_2\}$ is linearly independent.
Let $x=\sum_{c\in B_{1}}\mu_{c}c+\sum_{b\in B_{2}}\nu_{b}b$ with
$\mu_{c}$ and $\nu_{b}$ scalars satisfy $f(x)=0$. We claim that
$x$ belongs to the subspace spanned by $\{b+\sum_{y\in B_{1}(b)}y\;|\; b\in B_2\}$.
By (Z1), this claim implies that $B_{0}\cup \{b+\sum_{y\in B_{1}(b)}y\;| \;b\in B_2\}$ is a $k$-basis of $\Ke f$.

We write $x=x'+\sum_{b\in B_{2}}\nu_{b}(b+\sum_{y\in B_{1}(b)}y)$. Then $x'$
belongs to the subspace spanned by $B_1$. Observe that $f(x')=0$. Applying (Z2) and Lemma \ref{zero} for the triple $(f_1, B_1, B')$, we infer that $x'=0$,
proving that $x=\sum_{b\in B_{2}}\nu_{b}(b+\sum_{y\in B_{1}(b)}y)$.

By (Z1) and (Z3), any element of $\Im f$ is a $k$-linear combination of elements from $f(B_1)$. Since $f(B_1)\subseteq B'$, it follows that $f(B_{1})$ is a $k$-basis of $\Im f$.
\end{proof}

In what follows, $Q$ is a finite quiver without sinks. We
consider the differential $\p^l:\I^l\xrightarrow[]{}\I^{l+1}$ in Definition
\ref{definj}. We have the following $k$-basis of $\I^l$:
$$\G^l=\{e_i^{\s}\z_{(p, q)}, \aa^{\s}\z_{(p, q)}\;|\;i\in Q_0, (p, q)\in \Bb^l_i \;\text{and}\; \aa\in Q_1 \;\text{with} \;t(\aa)=i\}.$$
Consider a subset $\G^{l}_{0}=\{e_{i}^{\s}\zeta_{(p, q)}\;|\;i\in Q_{0}\text{~and~} (p, q)\in \Bb^l_i\}$ of $\G^l$. If $l<0$, we put $\G^{l}_{2}=\{\alpha^{\s}\zeta_{(p, e_{t(\aa)})}\;|\;\alpha  \text{~is special and~} p=\alpha \widehat{p}\in Q_{-l}\}$; if $l\geq 0$, we put $\G^{l}_{2}=\emptyset$. Take $\G^{l}_{1}=\G^{l}\setminus(\G^{l}_{0}\cup \G^{l}_{2})$. Then we have a disjoint union
$\G^l=\G^{l}_{0}\cup\G^{l}_{1}\cup\G^{l}_{2}$.

Observe that $\G^{l+1}$ is a $k$-basis of $\I^{l+1}$.
Consider a subset $\G'^{l+1}_0=\{e_{i}^{\s}\zeta_{(p, e_{i})}\;|\;i\in Q_{0}, (p, e_{i})\in \mathbf{B}_{i}^{l+1}\}$ of $\G^{l+1}$. We mention that $\G'^{l+1}_0=\emptyset$ for $l\geq 0$.
Set $\G'^{l+1}_1=\G^{l+1}\setminus\G'^{l+1}_0$. Then
we have a disjoint union $\G^{l+1}=\G'^{l+1}_0\cup\G'^{l+1}_1$.

Recall that a complex $X^{\bullet}$ of $A$-modules is \emph{acyclic} if the $l$-th cohomology $H^l(X^{\bullet})=0$ for each $l\in\Z$. The main result of this section is as follows.

\begin{prop} \label{propacy}Let $Q$ be a finite quiver without sinks. Then the injective Leavitt complex $\I^{\bullet}$
of $Q$ is an acyclic complex.
\end{prop}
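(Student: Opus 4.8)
The plan is to prove acyclicity by showing that $\Ke\p^{l}=\Im\p^{l-1}$ inside $\I^{l}$ for every $l\in\Z$, which is exactly the vanishing $H^{l}(\I^{\bu})=0$. The whole point of the three preparatory lemmas on linear maps is to outsource the linear algebra: I would use Condition $\rm(Y)$/$\rm(Z)$ to pin down $\Ke\p^{l}$ exactly as the span of $\G^{l}_{0}$, and then squeeze $\Im\p^{l-1}$ between the two inclusions $\Im\p^{l-1}\subseteq\Ke\p^{l}$ (from $\p^{l}\c\p^{l-1}=0$) and $\Ke\p^{l}\subseteq\Im\p^{l-1}$ (from Lemma \ref{partial}).

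First I would verify that the triple $(\p^{l},\G^{l},\G^{l+1})$ satisfies Condition $\rm(Y)$ when $l<0$ and Condition $\rm(Z)$ when $l\geq 0$, relative to the decompositions $\G^{l}=\G^{l}_{0}\cup\G^{l}_{1}\cup\G^{l}_{2}$ and $\G^{l+1}={\G'}^{l+1}_{0}\cup{\G'}^{l+1}_{1}$ fixed above. The axioms $\rm(Y1)$/$\rm(Z1)$ are immediate from Definition \ref{definj}, since $\p^{l}$ annihilates $\G^{l}_{0}$. For $\rm(Y2)$/$\rm(Z2)$ the key observation is that on $\G^{l}_{1}$ the differential always takes the ``otherwise'' branch (the branch-one hypothesis $q=e_{i},\ p=\aa\widehat{p},\ \aa$ special is precisely membership in $\G^{l}_{2}$, and for $l\geq 0$ it forces a negative degree), so $\aa^{\s}\z_{(p,q)}\mapsto e^{\s}_{s(\aa)}\z_{(p,q\aa)}$; I would then check that $(p,q\aa)$ is again an admissible pair of the correct degree (using $s(q)=t(\aa)$, and noting that admissibility is governed by the last arrows, which are unchanged, the borderline case $q=e_{i}$ being exactly where admissibility of the original pair forbids $\aa$ from being a special last arrow of $p$), and that the assignment is injective since $\aa$ and $q$ are recovered from $q\aa$. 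For $\rm(Y3)$ (needed only when $l<0$) I would take $b=\aa^{\s}\z_{(\aa\widehat{p},\,e_{t(\aa)})}\in\G^{l}_{2}$ with $\aa$ special, set $b_{0}=e^{\s}_{s(\aa)}\z_{(\widehat{p},\,e_{s(\aa)})}\in{\G'}^{l+1}_{0}$ and $y_{\beta}=\beta^{\s}\z_{(\beta\widehat{p},\,e_{t(\beta)})}$ for $\beta\in S(\aa)$, and read off from the first branch of Definition \ref{definj} that $\p^{l}(b)=b_{0}-\sum_{\beta\in S(\aa)}\p^{l}(y_{\beta})$; here each $y_{\beta}$ lies in $\G^{l}_{1}$ because $\beta\neq\aa$ is not special, and $b\mapsto b_{0}$ is injective because distinct $b$ yield distinct $\widehat{p}$ and $s(\aa)$, hence distinct $b_{0}$.

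With these conditions in hand, Lemma \ref{kerim1} (for $l<0$) and Lemma \ref{kerim2} (for $l\geq 0$, where $\G^{l}_{2}=\emptyset$ so the correction terms disappear) both give that $\G^{l}_{0}$ is a $k$-basis of $\Ke\p^{l}$, for every $l$. On the other side, $\p^{l}\c\p^{l-1}=0$ yields $\Im\p^{l-1}\subseteq\Ke\p^{l}$, while Lemma \ref{partial} exhibits each $e_{i}^{\s}\z_{(p,q)}\in\G^{l}_{0}$ as an element of $\Im\p^{l-1}$ — via part $(1)$ when $q=e_{i}$ and part $(2)$ when $q$ is nontrivial — so that the span of $\G^{l}_{0}$ is contained in $\Im\p^{l-1}$. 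Since $\G^{l}_{0}$ spans $\Ke\p^{l}$, combining the two inclusions gives $\Im\p^{l-1}=\Ke\p^{l}$ and hence $H^{l}(\I^{\bu})=0$.

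I expect the main obstacle to be the combinatorial bookkeeping in Step two: confirming that the ``otherwise'' branch always produces a genuine admissible pair of the right degree in the edge cases (trivial truncations $\widehat{p}=e$, the case $q=e_{i}$, the pairs $(r,e_{t(r)})$ and $(e_{t(r)},r)$), and that the case split $l<0$ versus $l\geq 0$ matches the (non)emptiness of $\G^{l}_{2}$ and ${\G'}^{l+1}_{0}$ correctly. Once that bookkeeping is settled, the verification of Conditions $\rm(Y)$ and $\rm(Z)$ is mechanical and the vanishing of cohomology follows formally from Lemmas \ref{kerim1}, \ref{kerim2}, and \ref{partial}.
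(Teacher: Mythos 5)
Your proposal is correct and follows essentially the same route as the paper: the paper's proof (via its Lemma \ref{partialker}) also verifies Condition $\rm(Y)$ for the triple $(\p^l,\G^l,\G^{l+1})$ with the same decompositions, the same identification of $b_0$ and $\G^l_1(b)$ for elements of $\G^l_2$, and the same use of Lemma \ref{partial} to relate $\G^{l+1}_0$ to the image. The only cosmetic differences are that the paper invokes Condition $\rm(Y)$ uniformly for all $l$ (with $\G^l_2=\emptyset$ making $\rm(Y3)$ vacuous when $l\geq 0$, where you switch to Condition $\rm(Z)$) and computes $\Im\p^l$ exactly via Lemma \ref{kerim1} rather than sandwiching it between the two inclusions as you do.
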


\begin{proof} The statement follows immediately from Lemma \ref{partialker}.
 \end{proof}

\begin{lem} \label{partialker} For each $l\in\mathbb{Z}$, the set $\G^{l}_{0}$
is a $k$-basis of ${\Ke}\p^{l}$ and the set $\G^{l+1}_0$ is a $k$-basis of $\Im \p^{l}$.
\end{lem}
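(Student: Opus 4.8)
The plan is to show that the triple $(\p^l, \G^l, \G^{l+1})$ satisfies Condition (Y) of Lemma \ref{kerim1}, with $B_0=\G^l_0$, $B_1=\G^l_1$, $B_2=\G^l_2$, $B'_0=\G'^{l+1}_0$ and $B'_1=\G'^{l+1}_1$, and then to read off both assertions from that lemma. Condition (Y1) is immediate, since $\p^l(e_i^\s\z_{(p,q)})=0$ by Definition \ref{definj}. For (Y2), every element of $\G^l_1$ has the form $\aa^\s\z_{(p,q)}$ falling under the ``otherwise'' clause, so $\p^l(\aa^\s\z_{(p,q)})=e_{s(\aa)}^\s\z_{(p,q\aa)}$; the second component $q\aa$ is nontrivial, so the image lies in $\G'^{l+1}_1$, and the restriction is injective because $(p,q\aa)$ determines $(p,q,\aa)$ upon stripping the rightmost arrow. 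This is Condition (X).

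For (Y3), a typical element of $\G^l_2$ is $b=\aa^\s\z_{(p,e_{t(\aa)})}$ with $\aa$ special and $p=\aa\widehat{p}$, and the ``special'' clause gives $\p^l(b)=e_{s(\aa)}^\s\z_{(\widehat{p},e_{s(\aa)})}-\sum_{\b\in S(\aa)}e_{s(\aa)}^\s\z_{(\b\widehat{p},\b)}$. I would set $b_0=e_{s(\aa)}^\s\z_{(\widehat{p},e_{s(\aa)})}\in\G'^{l+1}_0$ and $B_1(b)=\{\b^\s\z_{(\b\widehat{p},e_{t(\b)})}\mid\b\in S(\aa)\}$. The key point is that each $\b\in S(\aa)$ differs from the special arrow $\aa$ at $s(\aa)$, hence is not special, so $\b^\s\z_{(\b\widehat{p},e_{t(\b)})}$ lies in $\G^l_1$ and $\p^l$ sends it, via the ``otherwise'' clause, to $e_{s(\aa)}^\s\z_{(\b\widehat{p},\b)}$. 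This produces exactly the identity $\p^l(b)=b_0-\sum_{y\in B_1(b)}\p^l(y)$ required in (\ref{condition3}); moreover distinct $b$ yield distinct $b_0$, since $s(\aa)$ and $\widehat{p}$ recover $\aa$ (as the unique special arrow at $s(\aa)$) and $p=\aa\widehat{p}$.

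Granting Condition (Y), Lemma \ref{kerim1} yields at once that $\G^l_0$ is a $k$-basis of $\Ke\p^l$, which is the first assertion, and that $\p^l(\G^l_1)\cup\{b_0\mid b\in\G^l_2\}$ is a $k$-basis of $\Im\p^l$. It then remains to identify this set with $\G^{l+1}_0$. The $b_0$ are precisely the members of $\G^{l+1}_0$ with trivial second component, and $b\mapsto b_0$ is a bijection onto $\G'^{l+1}_0$ (for surjectivity, prepend the special arrow at $t(p')$); the elements of $\p^l(\G^l_1)$ are members of $\G^{l+1}_0$ with nontrivial second component, and for each such $e_j^\s\z_{(p',q')}$, writing $q'=\widetilde{q'}\aa$, Lemma \ref{partial}(2) exhibits $\aa^\s\z_{(p',\widetilde{q'})}$ as a preimage. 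I expect the main obstacle to be the surjectivity onto the nontrivial-second-component part, which amounts to checking that this preimage genuinely lies in $\G^l_1$ rather than $\G^l_2$. This is exactly where admissibility does the work: were it in $\G^l_2$, we would have $q'=\aa$ with $p'$ beginning with the special arrow $\aa$, forcing $(p',\aa)$ to be inadmissible and contradicting $e_j^\s\z_{(p',q')}\in\G^{l+1}_0$. Once the set equality $\p^l(\G^l_1)\cup\{b_0\mid b\in\G^l_2\}=\G^{l+1}_0$ is established, the second assertion follows.
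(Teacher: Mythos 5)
Your proposal is correct and follows essentially the same route as the paper: verify Condition (Y) for the triple $(\p^l,\G^l,\G^{l+1})$ with the same choices of $b_0$ and $B_1(b)$, invoke Lemma \ref{kerim1}, and then identify $\p^l(\G^l_1)\cup\{b_0\mid b\in\G^l_2\}$ with $\G^{l+1}_0$ via Lemma \ref{partial}(2) and prepending the special arrow. Your explicit check that the preimage $\aa^{\s}\z_{(p',\widetilde{q'})}$ lies in $\G^l_1$ rather than $\G^l_2$ (using admissibility of $(p',q')$) is a detail the paper leaves implicit, and it is handled correctly.
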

\begin{proof} We observe that the triple $(\p^l, \G^l, \G^{l+1})$
satisfies Condition (Y). Indeed, for any $b\in\G^l_0$, $\p^l(b)=0$. The differential $\p^l$ induces an injective map $\p^l:\G^l_1\xra \G'^{l+1}_1$. Then we have (Y1) and (Y2). To see (Y3), for $l<0$ and each element $\aa^{\s}\z_{(p, e_{t(\aa)})}\in \G^l_2$, we have $p=\aa\widehat{p}$ and that $\aa$ is special. By the differential $\p^l$ in Definition \ref{definj}, we have 
\begin{equation*}
\begin{split}
\p^l(\aa^{\s}\z_{(p, e_{t(\aa)})})
&=e_{s(\alpha)}^{\s}\zeta_{(\widehat{p}, e_{s(\alpha)})}-\sum\limits_{\beta\in S(\alpha)} e_{s(\alpha)}^{\s}\zeta_{(\beta\widehat{p}, \beta)}\\
&=e_{s(\alpha)}^{\s}\zeta_{(\widehat{p}, e_{s(\alpha)})}-\sum\limits_{\beta\in S(\alpha)} \p^l(\beta^{\s}\zeta_{(\beta\widehat{p}, e_{t(\beta)})}).
\end{split}
\end{equation*} By \eqref{condition3}, $(\aa^{\s}\z_{(p, e_{t(\aa)})})_0=e_{s(\aa)}^{\s}\z_{(\widehat{p}, e_{s(\aa)})}$
and $\G^l_1(\aa^{\s}\z_{(p, e_{t(\aa)})})=\{\b^{\s}\z_{(\b\widehat{p}, e_{t(\b)})}\;|\;\b\in S(\aa)\}$.

We recall that $\G^{l+1}_0=\{e_{i}^{\s}\zeta_{(p, q)}\;|\;i\in Q_{0}\text{~and~} (p, q)\in \Bb^{l+1}_i\}$.
We observe that $\p^l(\G^{l}_{1})\cup \{e_{s(\aa)}^{\s}\z_{(\widehat{p}, e_{s(\aa)})}\;|\;\aa^{\s}\z_{(p, e_{t(\aa)})}\in \G^{l}_2\}=\G^{l+1}_0$.
Indeed, if $l(q)\geq 1$, we apply Lemma $\ref{partial}$(2);
if $q=e_i$, let $\aa$ be the special arrow starting at $i$ and
set $p'=\aa p$. Then we have $e_{s(\aa)}^{\s}\z_{(\widehat{p'}, e_{s(\aa)})}
=e_i^{\s}\z_{(p, q)}$. The statements follow by applying Lemma $\ref{kerim1}$
for the triple $(\p^l, \G^l, \G^{l+1})$.
\end{proof}

\begin{exm} \label{oneandone}Let $Q$ be the following quiver with one vertex and one loop. $$\xymatrix{\scriptstyle{1}\cdot\ar@(ur, dr)[]|{\alpha}}$$
The unique arrow $\aa$ is special. Set $e=e_1$ and $\mathbf{B}^{l}=\Bb^l_1$
for each $l\in \mathbb{Z}$.
It follows that
\begin{equation*}\Bb^l=\begin{cases}
 \{(\aa^{-l}, e)\}, &
 \text{if $l<0$};\\
  \{(e, e)\},& \text{if $l=0$};\\
   \{(e, \aa^{l})\},&\text{if $l>0$}.
\end{cases}
\end{equation*}

The corresponding algebra $A$ with radical square zero
is isomorphic to $k[x]/(x^2)$.
Set $I=D(A_A)$. Write $I^{(\Bb^l)}=I\z^l$, where $\z^l=\z_{(\aa^{-l}, e)}$ for
$l<0$, $\z^0=\z_{(e, e)}$ and $\z^l=\z_{(e, \aa^l)}$ for $l>0$.
Then the injective Leavitt complex $\I^{\bullet}$ of $Q$ is as follows
$$
\cdots
\stackrel{}{\longrightarrow} I\z^{l-1}
\stackrel{\p^{l-1}}{\longrightarrow} I\z^{l}
\stackrel{\p^{l}}{\longrightarrow} I\z^{l+1}
\stackrel{}{\longrightarrow} \cdots,
$$ where the differential $\p^l$ is given by $\p^{l}(e^{\s}\zeta^l)=0$
and $\p^{l}(\aa^{\s}\zeta^l)=e^{\s}\z^{l+1}$ for each $l\in \Z$.

Observe that $A$ is a self-injective algebra with a unique simple module. The injective Leavitt complex $\I^{\bu}$
is isomorphic to a complete resolution of the simple $A$-module; see \cite[Definition 3.1.1]{bu} and compare \cite[Proposition 2.20]{th}.
\end{exm}

\begin{exm} \label{exm} Let $Q$ be the following quiver with one vertex and two loops. $$\xymatrix{\scriptstyle{1}\cdot \ar@(ul, dl)[]|{\alpha_{1}} \ar@(ur, dr)[]|{\alpha_{2}}}$$
We choose $\alpha_{1}$ to be the special arrow.
Set $e=e_1$ and $\mathbf{B}^{l}=\Bb^l_1$
for each $l\in \mathbb{Z}$.
A pair $(p,q)$ of paths lies in $\Bb^l$ if and only if
$l(q)-l(p)=l$ and $p, q$ do not end with $\aa_1$ simultaneously.
In particular, the set $\Bb^l$ is infinite.

The corresponding algebra $A$ with radical square zero
has a $k$-basis $\{e, \aa_1, \aa_2\}$.
Set $I=D(A_{A})$.
Then the injective Leavitt complex $\I^{\bullet}$ of $Q$ is as follows.
$$
\cdots
\stackrel{}{\longrightarrow} I^{({\mathbf{B}^{-1}})}
\stackrel{\p^{-1}}{\longrightarrow} I^{({\mathbf{B}^{0}})}
\stackrel{\p^{0}}{\longrightarrow} I^{({\mathbf{B}^{1}})}
\stackrel{}{\longrightarrow} \cdots
$$ We write the differential $\p^{-1}$ explicitly: $\p^{-1}(e^{\s}\zeta_{(p, q)})=0$, $\p^{-1}(\alpha_{2}^{\s} \zeta_{(p, q)})=e^{\s}\zeta_{(p, q\alpha_{2})}$ and

\begin{equation*}\p^{-1}(\alpha_{1}^{\s}\zeta_{(p, q)})=\begin{cases}
 e^{\s}\zeta_{(e, e)}-e^{\s}\zeta_{(\alpha_{2}, \alpha_{2})},  &\text{if $q=e$ and $p=\alpha_{1}$};\\
 e^{\s}\zeta_{(p, q\alpha_{1})},& \text{otherwise},
\end{cases}
\end{equation*} for $(p, q)\in \mathbf{B}^{-1}$.
\end{exm}

\section{The injective Leavitt complex as a compact generator}
\label{sthree}

In this section, we prove that the injective Leavitt complex of a finite quiver without sinks is a compact generator in
the homotopy category of acyclic complexes of injective modules over the corresponding
algebra with radical square zero.

\subsection{The cokernel complex and its subcomplexes}
\label{subsection31}

Let $Q$ be a finite quiver without sinks and $A=kQ/J^2$ be the corresponding algebra with radical square zero.
For each $i\in Q_0$, $l\in \Z$ and $n\geq 0$,
denote by
$$\mathbf{B}^{l, n}_{i}=\{(p, q)\;|\; (p, q)\in \mathbf{B}^{l}_{i} \text{~with~} q\in Q_{n} \}.$$ We refer to $(\ref{p})$ for the definition of $\Bb^l_i$. In addition, we set $\Bb^{l, n}_i=\emptyset$ if $n<0$.

Recall the injective Leavitt complex $\I^{\bu}$ of $Q$. For $l\in\Z$, $n\geq 0$, we simply denote $\bigoplus_{i\in Q_0}I_i^{(\Bb^{l, n}_i)}$ by $I^{(\Bb^{l,n})}$. We visually represent the injective Leavitt complex $\I^{\bu}$. 

\bigskip

\[\resizebox{1.\hsize}{!}{$
$$~~~~~~~~~~~~~~~~~~~~~~~\setlength{\unitlength}{1mm}
\begin{picture}(150,90)

\put(120,60){\textcolor{cyan}{$I^{(\Bb^{2, 2})}$}}
\put(100,60){$I^{(\Bb^{1, 2})}$} 
\put(80,60){$I^{(\Bb^{0, 2})}$}\put(60,60){$I^{(\Bb^{-1, 2})}$}
\put(40,60){$I^{(\Bb^{-2, 2})}$}

\put(61,81){\reflectbox{$\ddots$}}
\put(81,81){\reflectbox{$\ddots$}}
\put(101,81){\reflectbox{$\ddots$}}
\put(121,81){\reflectbox{$\ddots$}}
\put(141,81){\textcolor{cyan}{\reflectbox{$\ddots$}}}

\put(44,64){\vector (1,1){15}}
\put(64,64){\vector (1,1){15}}
\put(84,64){\vector (1,1){15}}
\put(104,64){\vector (1,1){15}}
\put(124,64){\textcolor{cyan}{\vector (1,1){15}}}

\put(100,40){\textcolor{cyan}{$I^{(\Bb^{1, 1})}$}}
\put(80,40){$I^{(\Bb^{0, 1})}$}\put(60,40){$I^{(\Bb^{-1, 1})}$}
\put(40,40){$I^{(\Bb^{-2, 1})}$}
\put(20,40){$I^{(\Bb^{-3, 1})}$}  

\put(24,44){\vector (1,1){15}}
\put(44,44){\vector (1,1){15}}
\put(64,44){\vector (1,1){15}}
\put(84,44){\vector (1,1){15}}
\put(104,44){\textcolor{cyan}{\vector (1,1){15}}}

\put(80,20){\textcolor{cyan}{$I^{(\Bb^{0, 0})}$}}
\put(60,20){$I^{(\Bb^{-1, 0})}$}
\put(40,20){$I^{(\Bb^{-2, 0})}$} \put(20,20){$I^{(\Bb^{-3, 0})}$}
\put(0,21){$\cdots$}

\put(84,24){\textcolor{cyan}{\vector (1,1){15}}}
\put(64,24){\vector (1,1){15}}
\put(44,24){\vector (1,1){15}}
\put(24,24){\vector (1,1){15}}
\put(4,24){\vector (1,1){15}}

\put(72,22){\vector (1,0){8}}
\put(52,22){\vector (1,0){8}}
\put(32,22){\vector (1,0){8}}
\put(12,22){\vector (1,0){8}}

\put(120, 10){$2$}\put(100, 10){$1$}
\put(80, 10){$0$}\put(60, 10){$-1$}\put(40, 10){$-2$}\put(20,10){$-3$}
\put(0, 10){$\cdots$}\put(140,10){$\cdots$}
\put(80,0){$\mathbf{(\dag)}$}
\end{picture}$$ $}\]

\bigskip

\begin{rmk} 
 For each $l\in\Z$, the $l$-th component of the injective Leavitt complex $\I^{\bu}$ is the coproduct of the objects in the $l$-th column of the above diagram. The differentials of $\I^{\bu}$ are combinations of the maps in the diagram.
\end{rmk}

We will give a subcomplex of the injective Leavitt complex $\I^{\bu}$.
For each $l\geq 0$, consider the submodule $M^l=\bigoplus_{i\in Q_{0}}I_{i}^{(\mathbf{B}^{l, l}_{i})}\subseteq \I^l$, where
$I_i=D(e_iA)$. Observe that the differential $\p^l:\I^l\xrightarrow[]{}\I^{l+1}$
satisfies $\p^l(M^l)\subseteq M^{l+1}$.
Then we have a subcomplex $M^{\bu}$ of $\I^{\bu}$,
by setting $M^l=0$ for $l<0$.
Denote by $\iota^{\bullet}=(\iota^{l})_{l\in \Z}: M^{\bullet}\xrightarrow[]{} \I^{\bullet}$ the inclusion chain map with $\iota^l=0$ for $l<0$. We set $C^{\bu}$ to be the cokernel of $\iota^{\bu}$, called the \emph{cokernel complex} in this section. The objects on the first cyan coloured diagonal line on the right of the diagram $(\dag)$ form the subcomplex $M^{\bu}$, while the other part gives rise to the cokernel complex $C^{\bu}$.

We now describe the cokernel complex $C^{\bu}=(C^l, \widetilde{\p}^l)_{l\in\Z}$. For each vertex $i\in Q_{0}$ and $l\in\Z$,
set \begin{equation}\label{integer}
{\mathbf{B}^{l, +}_{i}}=\bigcup_{m>l}\mathbf{B}^{l, m}_{i}.
\end{equation}
We have the disjoint union $\Bb^l_i=\Bb^{l,l}_i\cup {\mathbf{B}^{l, +}_{i}}$
for each $l\geq 0$, and $\Bb^l_i={\mathbf{B}^{l, +}_{i}}$ for $l<0$. The component of $C^{\bu}$ is $C^l=\bigoplus_{i\in Q_{0}}I_{i}^{({\mathbf{B}^{l, +}_{i}})}$ for each $l\in \Z$. We have $C^l=\I^l$ for $l<0$ and $\widetilde{\p}^l=\p^l$ for $l\leq -2$. For $l\geq 0$, we have $C^l\subseteq \I^l$ and $\p^l(C^l)\subseteq C^{l+1}$; indeed, $\I^l=M^l\oplus C^l$. Thus the differential $\widetilde{\p}^l$ on $C^l$ is the restriction of $\p^l$ to $C^l$ for $l\geq 0$. In summary,
the complex $C^{\bu}$ is as follows
$$\cdots\stackrel{}{\longrightarrow}
C^{-3}\stackrel{\p^{-3}}{\longrightarrow}
C^{-2}\stackrel{\p^{-2}}{\longrightarrow}
C^{-1}\stackrel{\widetilde{\partial}^{-1}}{\longrightarrow}
C^{0}\stackrel{\p^0}{\longrightarrow}
C^{1}\stackrel{\p^{1}}{\longrightarrow}
C^2\stackrel{}{\lra}
\cdots,
$$
where the differential $\widetilde{\partial}^{-1}$ is given such that  $\widetilde{{\partial}}^{-1}(e_{i}^{\s}\zeta_{(p, q)})=0$ and
\begin{equation*}\widetilde{{\partial}}^{-1}(\alpha^{\s}\zeta_{(p, q)})=\begin{cases}
-\sum\limits_{\beta\in S(\alpha)} e_{s(\alpha)}^{\s}\zeta_{(\beta, \beta)}, &\text{if $p=\alpha$ and $\alpha$ is special;}\\
 e_{s(\alpha)}^{\s}\zeta_{(p, q\alpha)},& \text{otherwise},
\end{cases}
\end{equation*} for any $i\in Q_{0}$, $(p, q)\in \mathbf{B}^{-1}_{i}$ and $\alpha\in Q_{1}$ with $t(\alpha)=i$.
Here, we recall from $(\ref{eq:vv})$ the definition of $S(\alpha)$.
We emphasize that the differential $\widetilde{\p}^{-1}$
is induced by the differential $\p^{-1}$ in Definition \ref{definj}.

We observe the following inclusions inside the complex $C^{\bu}$:
\begin{equation}\label{jsj1}\p^{l}(\bigoplus_{i\in Q_{0}}I_{i}^{(\mathbf{B}^{l, 0}_{i})})\subseteq
(\bigoplus_{i\in Q_{0}}I_{i}^{(\mathbf{B}^{l+1, 0}_{i})})\oplus(\bigoplus_{i\in Q_{0}}I_{i}^{(\mathbf{B}^{l+1, 1}_{i})})\text{~for each~} l\leq -2,\end{equation}
\begin{equation}\label{jsj2}\widetilde{\p}^{-1}(\bigoplus_{i\in Q_{0}}I_{i}^{(\mathbf{B}^{-1, 0}_{i})})\subseteq
\bigoplus_{i\in Q_{0}}I_{i}^{(\mathbf{B}^{0, 1}_{i})},\end{equation}
\begin{equation}\label{jsj3}\text{~~and\quad} \p^l(\bigoplus_{i\in Q_{0}}I_{i}^{(\mathbf{B}^{l, n}_{i})})\subseteq \bigoplus_{i\in Q_{0}}I_{i}^{(\mathbf{B}^{l+1, n+1}_{i})} \text{~for each~} l\in\Z \text{~and~} n>0.\end{equation}
By (\ref{jsj2}) and (\ref{jsj3}), the following complex $C_1^{\bu}$
is a subcomplex of $C^{\bu}$ satisfying $C^l_1=0$ for $l<-1$
$$0\rightarrow\bigoplus_{i\in Q_{0}}I_{i}^{(\mathbf{B}^{-1, 0}_{i})}
\stackrel{\widetilde{\partial}^{-1}}{\longrightarrow} \bigoplus_{i\in Q_{0}}I_{i}^{(\mathbf{B}^{0, 1}_{i})}\stackrel{\partial^{0}}{\longrightarrow}
 \bigoplus_{i\in Q_{0}}I_{i}^{(\mathbf{B}^{1, 2}_{i})}\stackrel{}{\longrightarrow}\cdots$$
$$\cdots\stackrel{}{\longrightarrow} \bigoplus_{i\in Q_{0}}I_{i}^{(\mathbf{B}^{l, l+1}_{i})}
\stackrel{\partial^{l}}{\longrightarrow} \bigoplus_{i\in Q_{0}}I_{i}^{(\mathbf{B}^{l+1, l+2}_{i})}
\stackrel{}{\longrightarrow} \cdots.$$
In general, by (\ref{jsj1}), (\ref{jsj2}) and (\ref{jsj3}), we have the subcomplex $C_{n}^{\bullet}$ of $C^{\bu}$ for each $n\geq 2$, whose components are given by
\begin{equation*}C^l_n=
\begin{cases}
0, &\text{if $l<-n$};\\ \bigoplus_{i\in Q_0}I_i^{(\Bb^{l, 0}_i)}, & \text{if $l=-n$;}\\
(\bigoplus_{i\in Q_0}I_i^{(\Bb^{l, 0}_i)})
\oplus\cdots\oplus(\bigoplus_{i\in Q_0}I_i^{(\Bb^{l, l+n}_i)}), & \text{if $-n<l<0$;}\\
 (\bigoplus_{i\in Q_0}I_i^{(\Bb^{l, l+1}_i)})\oplus\cdots\oplus(\bigoplus_{i\in Q_0}I_i^{(\Bb^{l, l+n}_i)})
, & \text{if $l\geq 0$.}
\end{cases}
\end{equation*}
For each $n\geq 1$, the first nonzero component of the complex $C_n^{\bu}$ is
$C^{-n}_n$. 

\begin{rmk}
The subcomplexes $C^{\bu}_n$ of the cokernel complex $C^{\bu}$ can be visually seen from the diagram $(\dag)$. For instance, the subcomplex $C_1^{\bu}$ consists of objects on the first black diagonal line on the right of the diagram $(\dag)$, the subcomplex $C_2^{\bu}$ consists of objects on the first two black diagonal lines on the right of the diagram $(\dag)$, and so on.
\end{rmk}

We denote by $i_n^{\bullet}: C_{n}^{\bullet}\xrightarrow [] {}C_{n+1}^{\bullet}$ the inclusion chain map for each $n\geq 1$.
We obtain the sequence
\begin{equation*}
C_{1}^{\bullet}\stackrel{i_{1}^{\bullet}}{\longrightarrow}C_{2}^{\bullet}\stackrel{i_{2}^{\bullet}}
{\longrightarrow}C_3^{\bu} \stackrel{}{\longrightarrow}\cdots \stackrel{}{\longrightarrow}C_{n}^{\bullet}\stackrel{i_{n}^{\bullet}}
{\longrightarrow}C_{n+1}^{\bullet}\stackrel{}{\longrightarrow}\cdots.
\end{equation*}
We observe that $C^{\bullet}=\lim\limits_{\xrightarrow[] {}} C_{n}^{\bullet}$ in the category of complexes of $A$-modules.

We now compute the cokernel
$\mathcal{E}^{\bullet}_{n+1}$ of $i_n^{\bullet}: C_{n}^{\bullet}\xrightarrow [] {}C_{n+1}^{\bullet}$ for each $n\geq 1$. In addition, we set $\E^{\bu}_1=C^{\bu}_1$. Observe that $\E^l_n=0$ for $l<-n$.
The complex $\mathcal{E}^{\bullet}_{n}$ for each $n\geq 1$ is as follows
$$
0 \rightarrow
\bigoplus_{i\in Q_{0}}I_{i}^{(\mathbf{B}^{-n, 0}_{i})}\stackrel{\widetilde{\partial}^{-n}}{\longrightarrow}
\bigoplus_{i\in Q_{0}}I_{i}^{(\mathbf{B}^{-n+1, 1}_{i})}
\stackrel{\partial^{-n+1}}{\longrightarrow} \bigoplus_{i\in Q_{0}}I_{i}^{(\mathbf{B}^{-n+2, 2}_{i})}\stackrel{}{\longrightarrow}\cdots$$
$$\stackrel{}{\longrightarrow}\bigoplus_{i\in Q_{0}}I_{i}^{(\mathbf{B}^{l, l+n}_{i})}
\stackrel{\partial^{l}}{\longrightarrow} \bigoplus_{i\in Q_{0}}I_{i}^{(\mathbf{B}^{l+1, l+n+1}_{i})}
\stackrel{}{\longrightarrow} \cdots,
$$ where $\widetilde{\partial}^{-n}$ is given by $\widetilde{{\partial}}^{-n}(e_{i}^{\s}\zeta_{(p, q)})=0$ and
\begin{equation*}\widetilde{{\partial}}^{-n}(\alpha^{\s}\zeta_{(p, q)})=\begin{cases}
-\sum\limits_{\beta\in S(\alpha)} e_{s(\alpha)}^{\s}\zeta_{(\beta\widehat{p}, \beta)}, & \text{if $p=\alpha\widehat{p}$ and $\alpha$ is special;}\\
 e_{s(\alpha)}^{\s}\zeta_{(p, q\alpha)},& \text{otherwise},
\end{cases}
\end{equation*} for any $i\in Q_{0}$, $(p, q)\in\mathbf{B}^{-n, 0}_{i}$ and $\alpha\in Q_{1}$ with $t(\alpha)=i$. The above differentials $\p^l$ for $l\geq -n+1$ are
inherited from $C^{\bu}$.

We will describe an explicit decomposition of $\E^{\bu}_n$.
For $i, j\in Q_0$, $l\in \Z$ and $n\geq 0$, set
$$\mathbf{B}^{l, n}_{i, j}=\{(p, q)~| ~(p, q)\in \mathbf{B}^{l}_{i} \text{~with~} q\in Q_n \text{~and~} s(p)=j\}.$$
For each vertex $j\in Q_0$ and $l\geq -1$, denote by
$E^{l}_{j}=\bigoplus\limits_{i\in Q_{0}}I_{i}^{(\mathbf{B}^{l, l+1}_{i, j})}$.
Then we have the following subcomplex $E_{j}^{\bullet}$ of $C^{\bu}_1$
\begin{equation}
\label{injrespro}
0\rightarrow
E^{-1}_{j}\stackrel{\widetilde{\partial}^{-1}}{\longrightarrow}
E^{0}_{j}\stackrel{\p^0}{\longrightarrow}
E^{1}_{j}\stackrel{}{\longrightarrow}
\cdots\stackrel{}{\longrightarrow}
E^{l}_{j}\stackrel{\p^{l}}{\longrightarrow}
E^{l+1}_{j}\stackrel{}{\longrightarrow} \cdots.
\end{equation} Indeed, we have $C^{\bu}_1=\bigoplus_{j\in Q_0}E_{j}^{\bullet}$.

For each vertex $j\in Q_{0}$ and $n\geq 0$,
denote by $Q_{n,j}=\{~p\in Q_{n} | ~t(p)=j \}$.
For a complex $X^{\bullet}=(X^{i}, d^i_{X})_{i\in \Z}$ of $A$-modules, we denote by $X^{\bullet}[1]$ the complex given by $(X^{\bullet}[1])^{i}=X^{i+1}$ and
$d^i_{X[1]}=-d^{i+1}_{X}$ for $i\in\Z$.
For each $n\in \Z$, we denote by $[n]$ the $n$-th power of $[1]$.

\begin{prop} \label{coproductinj} We have an isomorphism $\mathcal{E}^{\bullet}_{n}\cong\bigoplus_{j\in Q_{0}}{(E^{\bullet}_{j}[n-1])}^{(Q_{n-1, j})}$ of complexes of $A$-modules for each $n\geq 1$.
\end{prop}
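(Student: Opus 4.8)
The plan is to build an explicit isomorphism of complexes out of the combinatorics of admissible pairs, by splitting the length-$n$ path of each admissible pair into its top arrow and the remaining path of length $n-1$. For $(p,q)\in\Bb^{l,l+n}_i$ one has $l(p)=n$; writing $p=\aa\widehat{p}$ with $\aa$ the top arrow and $\widehat{p}\in Q_{n-1}$ the truncation, I would assign to $(p,q)$ the datum $\big((\aa,q),\widehat{p}\big)$. Setting $j=s(\aa)=t(\widehat{p})$, we get $\widehat{p}\in Q_{n-1,j}$ and $(\aa,q)\in\Bb^{l+n-1,l+n}_{i,j}$, because $l(q)-l(\aa)=(l+n)-1$ and $s(\aa)=j$. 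The crucial observation is that admissibility of a pair depends only on the top arrows of its two paths (Definition \ref{da}); since $p$ and $\aa$ share the top arrow $\aa$, the pair $(p,q)$ is admissible if and only if $(\aa,q)$ is. Hence this assignment is a bijection $\Bb^{l,l+n}_i\cong\bigsqcup_{j\in Q_0}\Bb^{l+n-1,l+n}_{i,j}\times Q_{n-1,j}$, valid also in the extremal degree $l=-n$ where $q=e_i$.

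Because $(E^{\bu}_j[n-1])^l=E^{l+n-1}_j=\bigoplus_{i}I_i^{(\Bb^{l+n-1,l+n}_{i,j})}$ is nonzero exactly for $l\geq -n$, taking on each copy of $I_i$ the identity map and reindexing via the above bijection produces an isomorphism of graded $A$-modules $\E^l_n\cong\big(\bigoplus_{j\in Q_0}(E^{\bu}_j[n-1])^{(Q_{n-1,j})}\big)^l$ in every degree. It remains to match the differentials. For $l\geq -n+1$ every differential lies in the ``otherwise'' branch of Definition \ref{definj}, namely $\aa^{\s}\z_{(p,q)}\mapsto e^{\s}_{s(\aa)}\z_{(p,q\aa)}$; since the top arrow of $p$ is untouched, this is carried to the differential $\aa^{\s}\z_{(\aa,q)}\mapsto e^{\s}_{s(\aa)}\z_{(\aa,q\aa)}$ of $E^{\bu}_j$. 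At the bottom degree $l=-n$, applied to $\aa^{\s}\z_{(p,e_i)}$ with $\aa$ special and $p=\aa\widehat{p}$, the map $\widetilde{\p}^{-n}$ gives $-\sum_{\b\in S(\aa)}e^{\s}_{s(\aa)}\z_{(\b\widehat{p},\b)}$; as $\b\widehat{p}$ has top arrow $\b$ and truncation $\widehat{p}$, it reindexes to $\big((\b,\b),\widehat{p}\big)$, matching the value $-\sum_{\b\in S(\aa)}e^{\s}_{s(\aa)}\z_{(\b,\b)}$ of $\widetilde{\p}^{-1}$ on $\aa^{\s}\z_{(\aa,e_i)}$ in $E^{\bu}_j$. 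The remaining subcases of $\widetilde{\p}^{-n}$ fall in the ``otherwise'' branch and are checked in the same way.

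The only remaining discrepancy is a global sign: each differential of $E^{\bu}_j[n-1]$ carries the shift factor $(-1)^{n-1}$ relative to the inherited differential of $\E^{\bu}_n$. I would absorb this by multiplying the degree-$l$ component of the reindexing map by the scalar $(-1)^{(n-1)l}$; the identity $(-1)^{(n-1)(l+1)}=(-1)^{n-1}(-1)^{(n-1)l}$ is precisely what makes the twisted map commute with both families of differentials, yielding the desired isomorphism $\E^{\bu}_n\cong\bigoplus_{j\in Q_0}(E^{\bu}_j[n-1])^{(Q_{n-1,j})}$ of complexes of $A$-modules.

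I expect the main obstacle to be the bookkeeping at the single non-generic differential $\widetilde{\p}^{-n}$: one must verify that each special-arrow term $-e^{\s}_{s(\aa)}\z_{(\b\widehat{p},\b)}$ stays within the summand indexed by $(j,\widehat{p})$ (so that the isomorphism respects the decomposition $\bigoplus_j$ and the coproduct over $Q_{n-1,j}$), while keeping the shift signs consistent across this one exceptional differential and the infinitely many generic ones.
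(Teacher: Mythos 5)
Your proposal is correct and follows essentially the same route as the paper: the paper likewise decomposes $\E^{\bu}_n$ according to the truncation $\widehat{p}\in Q_{n-1}$ of the length-$n$ path $p$, uses the bijection $(p,q)\mapsto(\b,q)$ with $p=\b\widehat{p}$ (which preserves admissibility since that depends only on the terminal arrows), and twists by the same sign $(-1)^{l(n-1)}$ to match the shifted differentials. The only cosmetic difference is that the paper packages the decomposition as subcomplexes $E(\g)^{\bu}$ indexed by $\g\in Q_{n-1}$ before identifying each with $E_{t(\g)}^{\bu}[n-1]$, whereas you work directly with the bijection of basis index sets.
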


\begin{proof} For each vertex $i\in Q_{0}$, $l\geq -n$ and path $\g\in Q_{n-1}$,
set $$\Bb(\g)_i^{l, l+n}=\{(p, q)\;|\; (p, q)\in \Bb_i^{l, l+n} \text{~with~} p=\b \g \text{~for some~} \b\in Q_1\}.$$ We observe a subcomplex $E(\g)^{\bu}$
of $\E^{\bu}_n$, whose components are given by
$$E(\g)^l=\bigoplus_{i\in Q_0}I_i^{(\Bb(\g)_i^{l, l+n})}$$ for $l\geq -n$ and
$E(\g)^l=0$ for $l<-n$. For any $(p, q)\in \mathbf{B}^{l, l+n}_{i}$ with $l\geq -n$, we have $l(p)=n$
and then $(p, q)\in \Bb(\widehat{p})_i^{l, l+n}$. Here, $\widehat{p}$ is the truncation of $p$.
Then we have the disjoint union $\Bb_i^{l, l+n}=\bigcup_{\g\in Q_{n-1}}\Bb(\g)_i^{l, l+n}$, and a decomposition
$\E^{\bu}_n=\bigoplus_{\g\in Q_{n-1}}E(\g)^{\bu}$ of complexes.

For each path $\g\in Q_{n-1}$ and $l\geq -n$, we have a bijection
$\Bb(\g)_i^{l, l+n}\xrightarrow[]{}\Bb^{l+n-1, l+n}_{i, t(\g)}$, sending
$(p, q)$ to $(\b, q)$ for $(p, q)\in \Bb(\g)_i^{l, l+n}$
with $p=\b \g$. This bijection induces an isomorphism of complexes
$$f_{\g}^{\bullet}: E(\g)^{\bullet}\xrightarrow []{\sim} E_{t(\g)}^{\bu}[n-1]$$ such that $f_{\g}^l(x\z_{(p, q)})=(-1)^{l(n-1)}x\z_{(\b, q)}$
for $l\geq -n$ and $x\z_{(p, q)}\in I_i\z_{(p, q)}$ with $i\in Q_0$, and $f_{\g}^l=0$ for $l<-n$.
Therefore, we have \begin{equation*}\E^{\bu}_n=\bigoplus_{j\in Q_0}\bigoplus_{\g\in Q_{n-1, j}}E(\g)^{\bu}
\cong \bigoplus_{j\in Q_0}(E_j^{\bu}[n-1])^{(Q_{n-1, j})}.\qedhere\end{equation*}
\end{proof}

We give two examples to describe the cokernel complex $C^{\bu}$ and its subcomplexes $C^{\bu}_n$.

\begin{exm}  Let $Q$ be the following quiver with one vertex and one loop. $$\xymatrix{\scriptstyle{1}\cdot\ar@(ur, dr)[]|{\alpha}}$$ We use the same notation as in
Example \ref{oneandone}. In particular, $A=kQ/J^2$ is a self-injective algebra, and then
$I=DA\cong A$.

Set $\mathbf{B}^{l, n}=\{(p, q)\;| \;(p, q)\in \mathbf{B}^{l} \text{~with~} q\in Q_{n} \}$ for $l\in \Z$ and $n\geq 0$. We have that
\begin{equation*}\Bb^{l, n}=\begin{cases}
\Bb^l, &
\text{if $l=n\geq 0$, or $l<0$ and $n=0$};\\
\emptyset,&\text{otherwise}.
\end{cases}
\end{equation*} The subcomplex $M^{\bullet}$ of the injective Leavitt complex $\I^{\bu}$ of $Q$ is as follows
$$
0\rightarrow
I\z^{0}
\stackrel{\p^{0}}{\longrightarrow}I\z^{1}
\stackrel{}{\longrightarrow}\cdots
\stackrel{}{\longrightarrow} I\z^{l}
\stackrel{\p^{l}}{\longrightarrow} I\z^{l+1}
\stackrel{}{\longrightarrow} \cdots.
$$

We denote by $\Bb^{l, +}=\bigcup_{m>l}\mathbf{B}^{l, m}$ for $l\in \Z$; compare (\ref{integer}). Then $\Bb^{l, +}=\Bb^l$ for $l<0$ and $\Bb^{l, +}=\emptyset$ for $l\geq 0$.
It follows that
the cokernel complex $C^{\bullet}$ is as follows
$$
\cdots
\stackrel{}{\longrightarrow} I\z^{-3}
\stackrel{\p^{-3}}{\longrightarrow} I\z^{-2}
\stackrel{\p^{-2}}{\longrightarrow} I\z^{-1}
\stackrel{}{\longrightarrow} 0.
$$ Observe that $C^{\bu}_1=I[1]$ is a stalk complex concentrated on degree $-1$.
The subcomplex $C^{\bu}_n$ of $C^{\bu}$ for each $n\geq 2$ is as follows
$$
C_n^{\bu}=\quad 0\rightarrow
I\z^{-n}
\stackrel{\p^{-n}}{\longrightarrow}I\z^{-n+1}
\stackrel{}{\longrightarrow}\cdots
\stackrel{}{\longrightarrow} I\z^{-2}
\stackrel{\p^{-2}}{\longrightarrow} I\z^{-1}
\stackrel{}{\longrightarrow} 0.
$$
\end{exm}

\begin{exm}  Let $Q$ be the following quiver with one vertex and two loops. $$\xymatrix{\scriptstyle{1}\cdot \ar@(ul, dl)[]|{\alpha_{1}} \ar@(ur, dr)[]|{\alpha_{2}}}$$
We choose $\alpha_{1}$ to be the special arrow starting at the unique vertex. We use the same notation as in Example \ref{exm}.
Set $\mathbf{B}^{l, n}=\{(p, q)\;| \;(p, q)\in \mathbf{B}^{l} \text{~with~} q\in Q_{n} \}$ for $l\in \Z$ and $n\geq 0$. The subcomplex $M^{\bullet}$ of the injective Leavitt complex $\I^{\bu}$ of $Q$ is as follows
$$
0\rightarrow
I^{(\mathbf{B}^{0, 0})}
\stackrel{\p^{0}}{\longrightarrow}I^{(\mathbf{B}^{1, 1})}
\stackrel{}{\longrightarrow}\cdots
\stackrel{}{\longrightarrow} I^{(\mathbf{B}^{l,l})}
\stackrel{\p^{l}}{\longrightarrow} I^{(\mathbf{B}^{l+1, l+1})}
\stackrel{}{\longrightarrow} \cdots.
$$

We denote by $\mathbf{B}^{l,+}=\bigcup_{m>l}\mathbf{B}^{l, m}$ for each $l\in \Z$; compare (\ref{integer}). We mention that each set $\mathbf{B}^{l,+}$ is infinite.
The cokernel complex $C^{\bullet}$ is as follows
$$
\cdots\stackrel{\partial^{-3}}{\longrightarrow}\mathcal{I}^{-2}\stackrel{\partial^{-2}}{\longrightarrow}
\mathcal{I}^{-1}\stackrel{\widetilde{\partial}^{-1}}{\longrightarrow}
I^{(\mathbf{B}^{0,+})}
\stackrel{\partial^{0}}{\longrightarrow} \cdots
\stackrel{}{\longrightarrow} I^{(\mathbf{B}^{l,+})}
\stackrel{\partial^{l}}{\longrightarrow} I^{(\mathbf{B}^{l+1, +})}
\stackrel{}{\longrightarrow} \cdots.
$$ We write down the subcomplexes $C_1^{\bu}$ and $C_2^{\bu}$ of $C^{\bu}$, explicitly.
\[\resizebox{1.\hsize}{!}{$C_1^{\bu}=\quad 0\stackrel{}{\longrightarrow}
0\stackrel{\quad}{\longrightarrow} I^{(\mathbf{B}^{-1, 0})}
\stackrel{\widetilde{\partial}^{-1}}{\longrightarrow} I^{(\mathbf{B}^{0, 1})}\stackrel{\partial^{0}}{\longrightarrow}
I^{(\mathbf{B}^{1, 2})}\stackrel{}{\longrightarrow}\cdots
\stackrel{}{\longrightarrow} I^{(\mathbf{B}^{l, l+1})}
\stackrel{\partial^{l}}{\longrightarrow} I^{(\mathbf{B}^{l+1, l+2})}
\stackrel{}{\longrightarrow} \cdots.
$}\]
\[\resizebox{1.\hsize}{!}{$C_2^{\bu}=\quad
0\stackrel{}{\longrightarrow} I^{(\mathbf{B}^{-2, 0})}
\stackrel{\partial^{-2}}{\longrightarrow} \begin{matrix}I^{(\mathbf{B}^{-1, 0})}\\\oplus\\ I^{(\mathbf{B}^{-1, 1})}\end{matrix}\stackrel{\widetilde{\partial}^{-1}}{\longrightarrow}
\begin{matrix}I^{(\mathbf{B}^{0, 1})}\\\oplus\\ I^{(\mathbf{B}^{0, 2})}\end{matrix}\stackrel{}{\longrightarrow}\cdots
\stackrel{}{\longrightarrow} \begin{matrix}I^{(\mathbf{B}^{l, l+1})}\\\oplus\\ I^{(\mathbf{B}^{l, l+2})}\end{matrix}
\stackrel{\partial^{l}}{\longrightarrow}\begin{matrix}I^{(\mathbf{B}^{l+1, l+2})}\\\oplus\\ I^{(\mathbf{B}^{l+1, l+3})}\end{matrix}
\stackrel{}{\longrightarrow} \cdots.
$}\] In $C^{\bu}_2$, the differentials $\widetilde{\p}^{-1}$ and $\p^{l}$
for $l\geq 0$ are given by diagonal matrices of morphisms; compare the second paragraph in the proof of Lemma \ref{injres}.
\end{exm}

\subsection{The homotopy categories}
\label{subsection32}

We consider the category $A$-Mod of left $A$-modules. Denote by $\K(A\-\Mod)$ its homotopy category. We will always view a module as a stalk complex concentrated on degree zero.

Recall that for a complex $X^{\bullet}=(X^{i}, d^i_{X})_{i\in \Z}$ of $A$-modules, the complex $X^{\bullet}[1]$ is given by $(X^{\bullet}[1])^{i}=X^{i+1}$ and
$d^i_{X[1]}=-d^{i+1}_{X}$ for $i\in\Z$.
For a chain map $f^{\bullet}: X^{\bullet}\xrightarrow[]{} Y^{\bullet}$ , its \emph{mapping cone} ${\rm Con}(f^{\bullet})$ is
a complex such that ${\rm Con}(f^{\bullet})=X^{\bullet}[1]\oplus Y^{\bullet}$ with the differential
$d^i_{{\rm Con}(f^{\bullet})}=\begin{pmatrix}-d^{i+1}_{X}&0\\f^{i+1}&d^i_{Y}\end{pmatrix}.$
Each triangle in $\K(A\-\Mod)$ is isomorphic to $$\CD
  X^{\bullet} @>f^{\bullet}>> Y^{\bullet}@>{\begin{pmatrix}0\\1\end{pmatrix}} >> {\rm Con}(f^{\bullet}) @>{\begin{pmatrix}1&0\end{pmatrix}}>> X^{\bullet}[1]
\endCD$$ for some chain map $f^{\bullet}$.

Consider the semisimple left $A$-module $kQ_{0}=A/\rad A$.
Recall from subsection \ref{subsection31} that the injective Leavitt complex $\I^{\bu}$ of $Q$ has a subcomplex $M^{\bu}$.

\begin{lem}  \label{injres} The left $A$-module $kQ_{0}$ is quasi-isomorphic to $M^{\bullet}$ as complexes. In other words, $M^{\bu}$ is an injective resolution of the $A$-module
$kQ_0$.
\end{lem}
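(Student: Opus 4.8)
The plan is to make $M^{\bu}$ completely explicit, split it according to the terminating vertices of the indexing paths, and then read off its cohomology from the bookkeeping lemmas already proved. First I would unwind the index sets. If $(p,q)\in\Bb^{l,l}_i$ then $q\in Q_l$ and $l(q)-l(p)=l$ force $l(p)=0$, i.e.\ $p=e_{t(q)}$, so $\Bb^{l,l}_i$ is in bijection with the paths of length $l$ starting at $i$. Hence
\[
M^l=\bigoplus_{i\in Q_0}I_i^{(\Bb^{l,l}_i)}\cong\bigoplus_{q\in Q_l}I_{s(q)}\z_{(e_{t(q)},q)} ,
\]
which, as $Q$ is finite and each $Q_l$ is therefore finite, is a finite direct sum of injective modules and so is injective. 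Reading off $\p^l$ from Definition \ref{definj}, I note the crucial point that since $p=e_{t(q)}$ has length $0$ (and $q\ne e_i$ once $l\ge 1$), the special-arrow case never occurs on $M^{\bu}$; only the ``otherwise'' branch survives, giving $\p^l(e_{s(q)}^{\s}\z_{(e_{t(q)},q)})=0$ and $\p^l(\aa^{\s}\z_{(e_{t(q)},q)})=e_{s(\aa)}^{\s}\z_{(e_{t(q)},q\aa)}$ for each arrow $\aa$ with $t(\aa)=s(q)$.

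Since $t(q\aa)=t(q)$, the differential preserves the terminating vertex of the indexing path, so I obtain a decomposition $M^{\bu}=\bigoplus_{j\in Q_0}M_j^{\bu}$ of complexes, where $M_j^l=\bigoplus_{q\in Q_l,\,t(q)=j}I_{s(q)}\z_{(e_{t(q)},q)}$; equivalently each $\p^l$ is a diagonal matrix of morphisms with respect to $j=t(q)$. In particular $M_j^0=I_j$, and by Lemma \ref{a} the block $I_{s(q)}\to I_{s(q\aa)}$ of $\p^l$ attached to $\aa$ is the canonical $A$-morphism killing the socle and sending $\aa^{\s}$ to the socle of the target, so that $M_j^{\bu}$ is visibly the minimal injective coresolution of the simple module $S_j$ at $j$.

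To compute cohomology I would invoke the combinatorial bookkeeping of Lemma \ref{kerim1}. Partition the basis of $M^l$ into the \emph{socle part} $B_0^l=\{e_{s(q)}^{\s}\z_{(e_{t(q)},q)}\mid q\in Q_l\}$, on which $\p^l$ vanishes, and the remaining part $B_1^l=\{\aa^{\s}\z_{(e_{t(q)},q)}\mid q\in Q_l,\ t(\aa)=s(q)\}$. The assignment $q'=q\aa$ is a bijection between $B_1^l$ and $Q_{l+1}$, under which $\p^l$ maps $B_1^l$ bijectively, with coefficient $1$, onto $B_0^{l+1}$. Thus the triple $(\p^l,B_0^l\cup B_1^l,B_0^{l+1}\cup B_1^{l+1})$ satisfies Condition $\mathrm{(Y)}$ with empty $B_2$, and Lemma \ref{kerim1} gives $\Ke\p^l=\mathrm{span}\,B_0^l$ and $\Im\p^l=\mathrm{span}\,B_0^{l+1}$. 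Therefore $\Im\p^{l-1}=\mathrm{span}\,B_0^l=\Ke\p^l$ for $l\ge 1$, so $H^l(M^{\bu})=0$ there, while $\Ke\p^0=\mathrm{span}\{e_i^{\s}\z_{(e_i,e_i)}\mid i\in Q_0\}$; using $\b\cdot e_i^{\s}=0$ and $e_j\cdot e_i^{\s}=\delta_{ij}e_i^{\s}$, this kernel is $A$-isomorphic to $\bigoplus_{i\in Q_0}S_i=kQ_0$.

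Finally I would exhibit the quasi-isomorphism itself: the socle inclusion $kQ_0\to M^0$ sending the generator of $S_i$ to $e_i^{\s}\z_{(e_i,e_i)}$ is $A$-linear, lands in $\Ke\p^0$, and induces exactly the identification $H^0(M^{\bu})\cong kQ_0$ just obtained; since $M^{\bu}$ has no higher cohomology, this is a quasi-isomorphism and exhibits $M^{\bu}$ as an injective resolution of $kQ_0$. The only genuinely delicate step is the combinatorial claim that $q\mapsto q\aa$ sets up a bijection $B_1^l\leftrightarrow Q_{l+1}$ matching $\p^l$ with the socle basis $B_0^{l+1}$ of $M^{l+1}$; once this is verified, the whole computation reduces to the already-established Lemma \ref{kerim1}.
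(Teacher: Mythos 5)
Your proof is correct, but it takes a genuinely different route from the paper's. You compute the cohomology of $M^{\bu}$ directly and self-containedly: after identifying $\Bb^{l,l}_i$ with $\{q\in Q_l \mid s(q)=i\}$ (so $p=e_{t(q)}$ and the special-arrow branch of the differential never fires on $M^{\bu}$), you apply the Condition (Y) bookkeeping of Lemma \ref{kerim1} with $B_2=\emptyset$ to the restricted differentials, using the bijection $(q,\aa)\mapsto q\aa$ between $B_1^l$ and $Q_{l+1}$. (Strictly speaking, to match the notation of Condition (Y) you should take $B_1'=B_0^{l+1}$ and $B_0'=B_1^{l+1}$ as the partition of the target basis, since $\p^l(B_1^l)$ lands in your socle part $B_0^{l+1}$; with $B_2=\emptyset$ this is harmless and the conclusion is as you state.) The paper instead gets $H^l(M^{\bu})=0$ for $l\geq 1$ for free from the already-proved acyclicity of $\I^{\bu}$ (Proposition \ref{propacy}) together with the differential-compatible splitting $\I^l=M^l\oplus C^l$ for $l\geq 0$, and identifies $\Ke\p^0\cap M^0$ via Lemma \ref{partialker}; it never needs to re-run the Condition (Y) argument on $M^{\bu}$ itself. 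Your version is more elementary and self-contained (it does not invoke Proposition \ref{propacy}) and has the bonus of exhibiting $M^{\bu}=\bigoplus_{j\in Q_0}M_j^{\bu}$ with $M_j^{\bu}$ the minimal injective resolution of the simple $S_j$, a refinement the paper does not record; the paper's version is shorter given the work already done in Section 2.
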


\begin{proof} We define an $A$-module monomorphism $f^0:kQ_{0}\xrightarrow[]{} M^0$ by
$f^0(e_{i})=e_{i}^{\s}\zeta_{(e_{i}, e_{i})}$ for each $i\in Q_{0}$, and observe that $\p^0\circ f^0=0$. Then we obtain a chain map $f^{\bullet}=(f^l)_{l\in\Z}: kQ_{0}
\xrightarrow []{}M^{\bullet}$ with $f^l=0$ for $l\neq 0$.

Recall that for each $l\geq 0$, $\I^l=M^l\oplus C^l$, $\p^l(M^l)\subseteq M^{l+1}$ and $\p^l(C^l)\subseteq C^{l+1}$. Applying Proposition \ref{propacy}, we obtain $H^l(M^{\bu})=0$ for $l\geq 1$. By Lemma \ref{partialker} $\Ke \p^0\cap M^0$ has a $k$-basis $\{e_i^{\s}\z_{(e_i, e_i)}\;|\;i \in Q_0\}$, and thus $\Ke \p^0\cap M^0=\Im f^0$. Then we are done.
\end{proof}

\begin{lem} \label{projinj} For each vertex $j\in Q_{0}$, the complex $E_{j}^{\bullet}$ in $(\ref{injrespro})$ is quasi-isomorphic to $Ae_{j}[1]$. In other words, $E_{j}^{\bullet}[-1]$ is an injective resolution of the projective $A$-module $Ae_j$.
\end{lem}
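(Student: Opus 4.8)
The plan is to exhibit an explicit quasi-isomorphism $g\colon Ae_j[1]\to E^{\bu}_j$ and then verify acyclicity of the augmented complex degree by degree, reusing the combinatorial lemmas on linear maps (Lemma \ref{kerim2} and Lemma \ref{partial}). First I would pin down the bottom term. Since $(p,q)\in \Bb^{-1,0}_{i,j}$ forces $l(p)=1$ and $q=e_{t(p)}$, the pairs indexing $E^{-1}_j$ are exactly $(\aa,e_{t(\aa)})$ for arrows $\aa$ with $s(\aa)=j$, so $E^{-1}_j=\bigoplus_{\aa\colon s(\aa)=j} I_{t(\aa)}\z_{(\aa,e_{t(\aa)})}$; as $Q$ has no sinks this is the injective envelope of $Ae_j$ (whose socle is spanned by the arrows $\aa$ with $s(\aa)=j$). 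Accordingly I define $g\colon Ae_j\to E^{-1}_j$ by $g(e_j)=\sum_{\aa\colon s(\aa)=j}\aa^{\s}\z_{(\aa,e_{t(\aa)})}$; using the $A$-action on the $I_i$ recalled before Lemma \ref{a}, one checks $g(\aa)=\aa\cdot g(e_j)=e_{t(\aa)}^{\s}\z_{(\aa,e_{t(\aa)})}$, so $g$ is a well-defined monomorphism, and a short computation gives $\widetilde{\p}^{-1}\circ g=0$. Viewing $Ae_j[1]$ as $Ae_j$ placed in degree $-1$, this makes $g$ a chain map $Ae_j[1]\to E^{\bu}_j$. It then remains to show $H^{-1}(E^{\bu}_j)=\Ke\widetilde{\p}^{-1}$ equals $\Im g$ and that $H^l(E^{\bu}_j)=0$ for all $l\geq 0$.

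For the cohomology in degrees $l\geq 1$ I would argue exactly as in Lemma \ref{partialker}: there every pair in $E^l_j$ has $l(q)=l+1\geq 2$, the differential is given by the ``otherwise'' clause $\p^l(\b^{\s}\z_{(p,q)})=e_{s(\b)}^{\s}\z_{(p,q\b)}$, this map is injective on the basis elements $\b^{\s}\z_{(p,q)}$ and annihilates the $e^{\s}\z_{(p,q)}$, so $\Ke\p^l$ is spanned by the $e^{\s}\z_{(p,q)}$. By Lemma \ref{partial}(2) each such $e_i^{\s}\z_{(p,q)}$ equals $\p^{l-1}(\aa^{\s}\z_{(p,\widetilde{q})})$, where $(p,\widetilde{q})$ is again an admissible pair lying in $E^{l-1}_j$ (its last arrows agree with those of $(p,q)$, so admissibility is preserved). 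Hence $\Ke\p^l=\Im\p^{l-1}$ and $H^l(E^{\bu}_j)=0$ for $l\geq 1$.

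The heart of the argument, and where the bookkeeping is most delicate, is the bottom of the complex. I would verify that the triple $(\widetilde{\p}^{-1},B,B')$, with $B$ the standard basis of $E^{-1}_j$ and $B'$ that of $E^0_j$, satisfies Condition $\mathrm{(Z)}$: take $B_0$ to be the $e^{\s}$-elements, $B_2=\{\g^{\s}\z_{(\g,e_{t(\g)})}\}$ for the unique special arrow $\g$ at $j$, and $B_1$ the remaining $\aa^{\s}$-elements. The defining formula for $\widetilde{\p}^{-1}$ shows that the ``otherwise'' images $e_{s(\aa)}^{\s}\z_{(p,\aa)}$ realise Condition $\mathrm{(X)}$ for $B_1$, while the special value $\widetilde{\p}^{-1}(\g^{\s}\z_{(\g,e_{t(\g)})})=-\sum_{\b\in S(\g)}e_j^{\s}\z_{(\b,\b)}=-\sum_{\b\in S(\g)}\widetilde{\p}^{-1}(\b^{\s}\z_{(\b,e_{t(\b)})})$ supplies Condition $\mathrm{(Z3)}$. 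Lemma \ref{kerim2} then yields that $\Ke\widetilde{\p}^{-1}$ has basis $\{e_{t(\aa)}^{\s}\z_{(\aa,e_{t(\aa)})}\mid s(\aa)=j\}\cup\{\sum_{\aa\colon s(\aa)=j}\aa^{\s}\z_{(\aa,e_{t(\aa)})}\}$, which is precisely $g$ applied to the basis $\{e_j\}\cup\{\aa\mid s(\aa)=j\}$ of $Ae_j$, using $\{\g\}\cup S(\g)=\{\aa\mid s(\aa)=j\}$. Thus $\Ke\widetilde{\p}^{-1}=\Im g\cong Ae_j$, giving $H^{-1}(E^{\bu}_j)\cong Ae_j$.

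Finally, Lemma \ref{kerim2} also identifies $\Im\widetilde{\p}^{-1}$ with the span of the ``otherwise'' images $e_{s(\aa)}^{\s}\z_{(p,\aa)}$. The key observation here is that, among all $(p,\aa)$ with $t(p)=t(\aa)$, a pair of arrows is admissible in the sense of Definition \ref{da} exactly when it is \emph{not} of the excluded special form $p=\aa$ special; hence these images run over precisely the basis of $\Ke\p^0$, so $\Im\widetilde{\p}^{-1}=\Ke\p^0$ and $H^0(E^{\bu}_j)=0$. Combining the three computations shows $g$ is a quasi-isomorphism, so $E^{\bu}_j$ is quasi-isomorphic to $Ae_j[1]$ and $E^{\bu}_j[-1]$ is an injective resolution of $Ae_j$. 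I expect the degree $(-1)\to 0$ step to be the main obstacle: one must get the signs and the special-arrow relation right so that Condition $\mathrm{(Z)}$ applies, and then match the admissibility condition on pairs of arrows with the ``otherwise'' clause so that $\Im\widetilde{\p}^{-1}$ fills out all of $\Ke\p^0$.
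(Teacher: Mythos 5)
Your proposal is correct and follows essentially the same route as the paper: the same augmentation map $\epsilon^{-1}(e_j)=\sum_{\{\beta\in Q_1\,\mid\,s(\beta)=j\}}\beta^{\s}\zeta_{(\beta,e_{t(\beta)})}$, the same application of Condition $\mathrm{(Z)}$ and Lemma \ref{kerim2} to the triple $(\widetilde{\p}^{-1},\La^{-1},\La^{0})$, and the same identification of $\Im\widetilde{\p}^{-1}$ with the basis $\La^0_0$ of $\Ke\p^0\cap E^0_j$. The only (harmless) divergence is in degrees $l\geq 1$, where the paper deduces $H^l(E^{\bu}_j)=0$ from Proposition \ref{propacy} via the direct-summand decomposition $\I^l=E^l_j\oplus L^l$, while you rerun the kernel--image computation of Lemma \ref{partialker} directly inside $E^{\bu}_j$; both are valid.
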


\begin{proof} Define an $A$-module monomorphism $\epsilon^{-1}: Ae_{j}\xrightarrow[]{} E^{-1}_{j}$ such that $$\epsilon^{-1}(e_{j})=\sum_{\{\beta\in Q_{1}\;|\; s(\beta)=j\}}\beta^{\s}\zeta_{(\beta, e_{t(\beta)})}.$$ We observe that $\epsilon^{-1}(\alpha)=e_{t(\alpha)}^{\s}\zeta_{(\alpha, e_{t(\alpha)})}$ for $\alpha\in Q_{1}$ with $s(\alpha)=j$. Then we have a chain map by
$\epsilon^{\bullet}=(\epsilon^l)_{l\in\Z}: Ae_{j}[1]
\xrightarrow []{}E^{\bullet}_j$ such that $\epsilon^l=0$ for $l\neq -1$.
Here, we use the fact that $\widetilde{\p}^{-1}\circ\epsilon^{-1}=0$.

For $l\geq 0$, we have $\I^l=E^l_j \oplus L^l$ for $L^l=\bigoplus_{i\in Q_0}I_i^{(\Bb^l_i\setminus \Bb^{l, l+1}_{i, j})}$. Moreover, we observe that $\p^l(E^l_j)\subseteq E^{l+1}_j$ and $\p^{l}(L^l)\subseteq L^{l+1}$. Applying Proposition \ref{propacy},
we obtain $H^l(E^{\bu}_j)=0$ for $l\geq 1$.

We now prove that $H^0(E^{\bu}_j)=0$ and $Ae_j\cong \Ke \widetilde{\p}^{-1}$.
Recall the differential $\widetilde{\p}^{-1}:E^{-1}_j\xra E^0_j$. For each $l\geq -1$, we have the following $k$-basis of $E^l_j$
$$\La^l=\{e_i^{\s}\z_{(p, q)},\aa^{\s}\zeta_{(p, q)}\;|\;i\in Q_0, (p, q)\in \Bb^{l, l+1}_{i,j} \text{~and~} \aa\in Q_1\text{~with~} t(\aa)=i\}.$$
Consider a subset $\La^l_0=\{e_i^{\s}\zeta_{(p, q)}\;|\;i\in Q_0, (p, q)\in \Bb^{l, l+1}_{i,j}\}$.
Set
$$\La^{-1}_2=\{\aa^{\s}\z_{(\aa, e_{t(\aa)})}\;|\;\aa\in Q_1 \text{~is special with~} s(\aa)=j\}.$$
Take $\La^{-1}_1=\La^{-1}\setminus (\La^{-1}_0\cup\La^{-1}_2)$.
Then we have a disjoint union $\La^{-1}=\La^{-1}_0\cup\La^{-1}_1\cup\La^{-1}_2$.
The triple $(\widetilde{\p}^{-1}, \La^{-1},\La^0)$ satisfies Condition (Z) in
Lemma \ref{kerim2}. Then we infer that
$\La^{-1}_0\cup\{\sum_{\{\beta\in Q_{1}\;|\;s(\beta)=j\}}\beta^{\s}\zeta_{(\beta, e_{t(\beta)})}\}$ is a $k$-basis of $\Ke \widetilde{\p}^{-1}$ and
that $\widetilde{\p}^{-1}(\La^{-1}_1)$ is a $k$-basis of $\Im \widetilde{\p}^{-1}$.
By Lemma \ref{partial}(2), we obtain $\widetilde{\p}^{-1}(\La^{-1}_1)=\La^{0}_0$.
By Lemma \ref{partialker}, the set $\La^0_0$ is a $k$-basis of $\Ke \p^0\cap E_j^0$.
It follows that $\Im \widetilde{\p}^{-1}=\Ke \p^0\cap E_j^0$, that is, $H^0(E^{\bu}_j)=0$. We observe that $\Im \epsilon^{-1}=\Ke \widetilde{\p}^{-1}$.
This completes the proof that $\epsilon^{\bullet}$ is a quasi-isomorphism.
\end{proof}

Recall that $\E^{\bu}_1=C^{\bu}_1$. By the decompositions $C^{\bu}_1=\bigoplus_{j\in Q_0}E^{\bu}_j$ and $A=\bigoplus_{j\in Q_0} Ae_j$, we have the following consequence.

\begin{cor} \label{cq} The complex $\E_1^{\bullet}$ is quasi-isomorphic to $A[1]$.
In other words, the complex $\E_1^{\bullet}[-1]$ is an injective resolution of the regular module $A$.\hfill $\square$
\end{cor}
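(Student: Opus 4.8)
The plan is to assemble the corollary directly from Lemma \ref{projinj} together with the two decompositions recorded just before the statement. First I would recall that $\E^{\bu}_1=C^{\bu}_1$ and that the cokernel subcomplex $C^{\bu}_1$ splits as a direct sum $C^{\bu}_1=\bigoplus_{j\in Q_0}E^{\bu}_j$ of complexes of $A$-modules, where each $E^{\bu}_j$ is the complex displayed in $(\ref{injrespro})$, concentrated in degrees $\geq -1$.

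Next I would apply Lemma \ref{projinj} at each vertex $j\in Q_0$, which provides a quasi-isomorphism $E^{\bu}_j\simeq Ae_j[1]$; equivalently, $E^{\bu}_j[-1]$ is an injective resolution of the projective $A$-module $Ae_j$. Taking the coproduct over all $j\in Q_0$, and using that $Q_0$ is finite so that cohomology commutes with the direct sum, I would conclude that $\E^{\bu}_1=\bigoplus_{j\in Q_0}E^{\bu}_j$ is quasi-isomorphic to $\bigoplus_{j\in Q_0}Ae_j[1]$. Since the shift functor $[1]$ commutes with direct sums and since $A=\bigoplus_{j\in Q_0}Ae_j$ as a left $A$-module, the latter complex is exactly $A[1]$, which gives the first assertion.

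The second assertion is a reformulation: shifting by $[-1]$ yields a quasi-isomorphism $\E^{\bu}_1[-1]\simeq A$, and because every component of $\E^{\bu}_1$ is a coproduct of the injective modules $I_i$ and the complex is concentrated in degrees $\geq -1$, the complex $\E^{\bu}_1[-1]$ is a complex of injective $A$-modules concentrated in degrees $\geq 0$; hence it is an injective resolution of the regular module $A$. I do not expect any genuine obstacle, as the corollary is a formal consequence of Lemma \ref{projinj}; the only point worth stating explicitly is the compatibility of quasi-isomorphisms with the finite direct sum and with the shift, both of which are immediate since cohomology is additive and commutes with $[1]$ up to a sign on the differential.
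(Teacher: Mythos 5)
Your proposal is correct and follows exactly the paper's intended argument: the corollary is deduced from Lemma \ref{projinj} via the decompositions $\E_1^{\bullet}=C_1^{\bullet}=\bigoplus_{j\in Q_0}E_j^{\bullet}$ and $A=\bigoplus_{j\in Q_0}Ae_j$, which is precisely what the paper states in the sentence preceding the corollary. Your additional remarks on the compatibility of quasi-isomorphisms with finite coproducts and with the shift are harmless elaborations of the same route.
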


We now recall some terminology and facts on triangulated categories.
For a triangulated category $\mathcal{T}$, a \emph{thick} subcategory of $\mathcal{T}$
is a triangulated subcategory of $\mathcal{T}$ which is closed under direct summands. Let $\mathcal{S}$ be a class of objects in $\mathcal{T}$.
We denote by thick$\langle\mathcal{S}\rangle$ the smallest thick subcategory of $\mathcal{T}$
containing $\mathcal{S}$.
If $\mathcal{T}$ has arbitrary coproducts, we denote by
${\rm Loc}\langle\mathcal{S}\rangle$ the smallest triangulated subcategory of
$\mathcal{T}$ which contains $\mathcal{S}$ and is closed under arbitrary coproducts.
By \cite[Proposition 3.2]{bn} we have that thick$\langle\mathcal{S}\rangle\subseteq{\rm Loc}\langle \mathcal{S}\rangle$.

The following standard fact will be used.

\begin{lem}\label{cc}Let $\mathcal{T}$ be a triangulated category with arbitrary coproducts. Let $S$ be an object in $\mathcal{T}$ and $\mathcal{T}'$ be a triangulated subcategory of $\mathcal{T}$ satisfying $\Hom_{\mathcal{T}}(S, T')=0$ for each
$T'\in \mathcal{T}'$. Then any object $X\in {\rm Loc}\langle \mathcal{S}\rangle$
satisfies $\Hom_{\mathcal{T}}(X, T')=0$ for each
$T'\in \mathcal{T}'$.
\end{lem}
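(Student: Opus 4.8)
The plan is to show that the class of objects $X$ satisfying $\Hom_{\mathcal{T}}(X,T')=0$ for all $T'\in\mathcal{T}'$ is itself a triangulated subcategory closed under arbitrary coproducts, and then invoke the minimality of ${\rm Loc}\langle \mathcal{S}\rangle$. More precisely, set
$$\mathcal{X}=\{X\in\mathcal{T}\;|\;\Hom_{\mathcal{T}}(X, T')=0 \text{ for each } T'\in\mathcal{T}'\}.$$
By hypothesis $S\in\mathcal{X}$, so once we verify that $\mathcal{X}$ is a triangulated subcategory closed under coproducts, it must contain ${\rm Loc}\langle \mathcal{S}\rangle$, which is the smallest such subcategory containing $S$. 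This gives exactly the desired conclusion.

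First I would check that $\mathcal{X}$ is closed under the shift $[1]$ and its inverse; this is immediate since $\Hom_{\mathcal{T}}(X[1], T')\cong\Hom_{\mathcal{T}}(X, T'[-1])$ and $\mathcal{T}'$, being a triangulated subcategory, is closed under shifts, so $T'[-1]\in\mathcal{T}'$. Next I would verify the two-out-of-three property along triangles: given a triangle $X\to Y\to Z\to X[1]$ with two of the three vertices in $\mathcal{X}$, apply the cohomological functor $\Hom_{\mathcal{T}}(-, T')$ for a fixed $T'\in\mathcal{T}'$ to obtain a long exact sequence
$$\cdots\to\Hom_{\mathcal{T}}(X[1], T')\to\Hom_{\mathcal{T}}(Z, T')\to\Hom_{\mathcal{T}}(Y, T')\to\Hom_{\mathcal{T}}(X, T')\to\cdots.$$
If two of the relevant groups vanish for all $T'$, exactness forces the third to vanish as well; this shows $\mathcal{X}$ is a triangulated subcategory. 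Finally, for closure under coproducts, if $\{X_i\}$ is a family of objects in $\mathcal{X}$, then $\Hom_{\mathcal{T}}(\coprod_i X_i, T')\cong\prod_i\Hom_{\mathcal{T}}(X_i, T')=0$, using the universal property of the coproduct; hence $\coprod_i X_i\in\mathcal{X}$.

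Having established that $\mathcal{X}$ is a triangulated subcategory of $\mathcal{T}$ closed under arbitrary coproducts and containing $S$, the definition of ${\rm Loc}\langle \mathcal{S}\rangle$ as the smallest such subcategory yields ${\rm Loc}\langle \mathcal{S}\rangle\subseteq\mathcal{X}$, which is precisely the assertion that every $X\in{\rm Loc}\langle \mathcal{S}\rangle$ satisfies $\Hom_{\mathcal{T}}(X, T')=0$ for each $T'\in\mathcal{T}'$.

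I do not anticipate a genuine obstacle here, as this is a standard orthogonality argument; the only point requiring mild care is keeping track of which variable of $\Hom_{\mathcal{T}}$ is contravariant, so that the shift lands on $T'$ rather than on $X$, and correspondingly that it is the product (not the coproduct) of Hom-groups that appears in the coproduct computation. The argument uses only the hypothesis that $\mathcal{T}'$ is a triangulated subcategory (needed for stability of the vanishing condition under shifts of $T'$) together with the existence of coproducts in $\mathcal{T}$.
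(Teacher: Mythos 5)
Your argument is correct and is exactly the paper's approach: the paper's proof consists of the single observation that the full subcategory of objects $Y$ with $\Hom_{\mathcal{T}}(Y,T')=0$ for all $T'\in\mathcal{T}'$ is a triangulated subcategory closed under arbitrary coproducts, which you have simply spelled out in detail (shift-closure via shifting $T'$, two-out-of-three via the long exact sequence, and the coproduct-to-product isomorphism of Hom-groups).
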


\begin{proof} It suffices to observe that $\{Y\in \mathcal{T}\;|\; \Hom_{\mathcal{T}}(Y, T')=0 \text{~for each~} T'\in\mathcal{T}'\}$ is a triangulated subcategory of $\mathcal{T}$, which is closed under arbitrary coproducts.
\end{proof}

For a triangulated category $\mathcal{T}$ with arbitrary coproducts, an object $M$ in $\mathcal{T}$ is \emph{compact}
if the functor ${\rm Hom}_{\mathcal{T}}(M,-)$ commutes with arbitrary coproducts.
Denote by $\mathcal{T}^{c}$ the full subcategory consisting of compact objects; it is a thick subcategory.

A triangulated category $\mathcal{T}$ with arbitrary coproducts is \emph{compactly generated} \cite{ke, n1} if there exists a set $\mathcal{S}$ of compact objects such that
any nonzero object $T$ satisfies that ${\rm Hom}_{\mathcal{T}}(S,T[n])\neq 0$
for some $S\in\mathcal{S}$ and $n\in\Z$.  This  is equivalent to the condition that
$\mathcal{T}={\rm Loc}\langle\mathcal{S}\rangle$, in which case we have $\mathcal{T}^{c}$=thick$\langle\mathcal{S}\rangle$; see \cite[Lemma 3.2]{n1}. If the above set $\mathcal{S}$
consists of a single object $S$, we call $S$ a \emph{compact generator} of $\mathcal{T}$.

\begin{lem} \label{comg}Suppose that $\mathcal{T}$ is a compactly generated triangulated category
with a compact generator $X$. Let $\mathcal{T}'\subseteq \mathcal{T}$ be a triangulated
subcategory closed under arbitrary coproducts. Assume that there exists a triangle
\begin{align*}\CD
 X @>>>Y@>>>Z @>>>X[1]
\endCD
\end{align*} such that $Y\in \mathcal{T}'$ and $Z$ satisfies $\Hom_{\mathcal{T}}(Z, T')=0$
for each $T'\in\mathcal{T}'$. Then $Y$ is a compact generator of $\mathcal{T}'$.
\end{lem}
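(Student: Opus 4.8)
The plan is to transport all questions about $\mathcal{T}'$ back to the compact generator $X$ of $\mathcal{T}$ by means of the given triangle. The first step is to apply the contravariant cohomological functor $\Hom_{\mathcal{T}}(-, T')$ to the triangle $X\xrightarrow{f} Y\to Z\to X[1]$ for an arbitrary $T'\in\mathcal{T}'$. Since $\mathcal{T}'$ is a triangulated subcategory it is closed under shifts, so the hypothesis $\Hom_{\mathcal{T}}(Z, T')=0$ upgrades to $\Hom_{\mathcal{T}}(Z, T'[n])=0$ for every $n\in\Z$. Inserting these vanishings into the resulting long exact sequence, the connecting terms coming from $Z$ disappear on both sides of $\Hom_{\mathcal{T}}(X, T'[n])$, and so precomposition with $f$ yields a natural isomorphism $f^{*}\colon\Hom_{\mathcal{T}}(Y, T'[n])\xrightarrow{\sim}\Hom_{\mathcal{T}}(X, T'[n])$ for all $n\in\Z$ and all $T'\in\mathcal{T}'$.

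The second step is to deduce that $Y$ is compact in $\mathcal{T}'$. As $\mathcal{T}'$ is closed under arbitrary coproducts, a coproduct of a family $(T'_{i})$ in $\mathcal{T}'$ is computed in $\mathcal{T}$ and again lies in $\mathcal{T}'$; hence compactness of $Y$ relative to $\mathcal{T}'$ amounts to the canonical map $\coprod_{i}\Hom_{\mathcal{T}}(Y, T'_{i})\to\Hom_{\mathcal{T}}(Y, \coprod_{i}T'_{i})$ being an isomorphism. The naturality of $f^{*}$ from the first step gives a commutative square relating this comparison map for $Y$ to the one for $X$, in which the two vertical maps are the isomorphisms $f^{*}$; since the comparison map for $X$ is an isomorphism by compactness of $X$ in $\mathcal{T}$, so is the one for $Y$.

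The third step is to show that $Y$ generates $\mathcal{T}'$. Since $\mathcal{T}'$ has arbitrary coproducts and $Y$ is now known to be compact, by the criterion recalled before the lemma it suffices to prove that any $T'\in\mathcal{T}'$ with $\Hom_{\mathcal{T}}(Y, T'[n])=0$ for all $n\in\Z$ is zero. The isomorphism $f^{*}$ converts this into $\Hom_{\mathcal{T}}(X, T'[n])=0$ for all $n$, and since $X$ is a compact generator of $\mathcal{T}$ this forces $T'=0$. Together with the second step, this proves that $Y$ is a compact generator of $\mathcal{T}'$.

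I expect the only delicate point to be the compactness step: one must check that the naturality square genuinely identifies the coproduct-comparison map for $Y$ with that for $X$, and that compactness relative to $\mathcal{T}'$ is controlled by coproducts formed inside $\mathcal{T}'$ (which holds precisely because $\mathcal{T}'$ is closed under coproducts in $\mathcal{T}$). The first step is the engine of the argument; Lemma \ref{cc} could alternatively be invoked to organize the vanishing bookkeeping in the generation step, but the direct comparison with the compact generator $X$ is the most economical route.
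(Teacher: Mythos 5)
Your proof is correct and follows essentially the same route as the paper: apply the cohomological functor $\Hom_{\mathcal{T}}(-,T')$ to the triangle, use the vanishing of $\Hom_{\mathcal{T}}(Z,T'[n])$ to obtain the functorial isomorphism $\Hom_{\mathcal{T}}(Y,T'[n])\cong\Hom_{\mathcal{T}}(X,T'[n])$, and transport both compactness and generation from $X$ to $Y$. The paper compresses your second and third steps into the phrase ``the result follows immediately,'' but the underlying argument is identical.
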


\begin{proof} For each $T'\in\mathcal{T}'$, we apply the cohomological functor $\Hom_{\mathcal{T}}(-, T')$
to the above triangle. Then we obtain a functorial isomorphism $\Hom_{\mathcal{T}}(X, T')\cong \Hom_{\mathcal{T}'}(Y, T')$. The result follows immediately, since $X$ is a compact generator of $\mathcal{T}$.
\end{proof}

Let $A\-\Inj$ be the category of injective $A$-modules. Denote by $\K(A\-\Inj)$ the homotopy category
of complexes of injective $A$-modules, which is a triangulated subcategory of $\K(A\-\Mod)$ that is closed under coproducts. By \cite[Proposition 2.3(1)]{kr} $\K(A\-\Inj)$ is a compactly generated triangulated category.

Denote by $A$-mod the category of finitely generated $A$-modules and by $\D^b(A\-\mod)$ its bounded derived category.
Recall that each bounded complex of $A$-modules admits a quasi-isomorphism to a bounded-below complex of injective $A$-modules.
This gives rise to a full embedding $\D^b(A\-\mod)\hookrightarrow\K(A\-\Inj)$, which induces a triangle equivalence
$\D^b(A\-\mod)\xrightarrow[]{\sim}\K(A\-\Inj)^c$; see \cite[Proposition 2.3(2)]{kr}.

We point out that the following lemma is contained in the proof
of \cite[Theorem 2.5]{cy}.

\begin{lem}\label{cominj} The complex $M^{\bu}$ is a compact generator of $\K(A\-\Inj)$.
\end{lem}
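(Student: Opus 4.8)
The plan is to show that $M^{\bu}$ is a compact generator of $\K(A\-\Inj)$ by exhibiting it as a compact object and then verifying the generation condition. First I would observe that $M^{\bu}$ is compact. By Lemma \ref{injres}, $M^{\bu}$ is an injective resolution of the semisimple module $kQ_0$, so under the triangle equivalence $\D^b(A\-\mod)\xrightarrow{\sim}\K(A\-\Inj)^c$ recalled from \cite[Proposition 2.3(2)]{kr}, the complex $M^{\bu}$ corresponds to the bounded complex $kQ_0$, which is a finitely generated $A$-module and hence lies in $\D^b(A\-\mod)$. Therefore $M^{\bu}$ is a compact object of $\K(A\-\Inj)$.

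Next I would verify that $M^{\bu}$ generates, that is, $\K(A\-\Inj)={\rm Loc}\langle M^{\bu}\rangle$. The key point is that the compact objects of $\K(A\-\Inj)$ are precisely $\mathrm{thick}\langle \D^b(A\-\mod)\rangle$, and since $A$ has radical square zero, the simple modules (equivalently the summands of $kQ_0$) build all finitely generated modules in $\D^b(A\-\mod)$ up to the thick closure: every finitely generated $A$-module has a filtration whose subquotients are simple, so it lies in $\mathrm{thick}\langle kQ_0\rangle$ inside $\D^b(A\-\mod)$. Transporting through the equivalence, every compact object of $\K(A\-\Inj)$ lies in $\mathrm{thick}\langle M^{\bu}\rangle$. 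Since $\K(A\-\Inj)$ is compactly generated by its compact objects, we have $\K(A\-\Inj)={\rm Loc}\langle \K(A\-\Inj)^c\rangle={\rm Loc}\langle M^{\bu}\rangle$, using that the localizing subcategory generated by a class equals that generated by its thick closure.

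The main obstacle I expect is the generation step rather than compactness: one must argue carefully that the single object $kQ_0$ (or equivalently $M^{\bu}$) thickly generates all of $\D^b(A\-\mod)$. This is where the radical square zero hypothesis is essential, since for such an algebra the Jacobson radical is semisimple and every finitely generated module admits a two-step filtration by semisimple modules, giving a triangle in $\D^b(A\-\mod)$ that exhibits the module in $\mathrm{thick}\langle kQ_0\rangle$. Once this thick generation inside $\D^b(A\-\mod)$ is established, the passage to the localizing subcategory of the ambient compactly generated category is formal, using the general fact that $\mathcal{T}={\rm Loc}\langle \mathcal{T}^c\rangle$ for a compactly generated category together with $\mathcal{T}^c=\mathrm{thick}\langle \mathcal{S}\rangle$ whenever $\mathcal{S}$ thickly generates the compacts. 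I would then conclude that $M^{\bu}$, being compact and a generator in this sense, is a compact generator of $\K(A\-\Inj)$.
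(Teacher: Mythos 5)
Your proposal is correct and follows essentially the same route as the paper: compactness via the equivalence $\D^b(A\text{-}\mod)\xrightarrow{\sim}\K(A\text{-}\Inj)^c$ of \cite[Proposition 2.3]{kr} applied to the injective resolution $M^{\bu}$ of $kQ_0$, and generation via the observation that $\mathrm{thick}\langle kQ_0\rangle=\D^b(A\text{-}\mod)$ (which the paper states without the filtration detail you supply). One minor quibble: the radical square zero hypothesis is not actually essential for the thick-generation step, since for any finite dimensional algebra the radical filtration of a finitely generated module is finite with semisimple subquotients; it merely makes the filtration two-step here.
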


\begin{proof} Observe that thick$\langle kQ_0\rangle=\D^b(A\-\mod)$. Recall Lemma \ref{injres} that $M^{\bu}$ is an injective resolution of the $A$-module $kQ_0$. It follows from \cite[Proposition 2.3]{kr} that
$M^{\bu}$ is a compact
object in $\K(A\-\Inj)$ and ${\rm Loc}\langle M^{\bu}\rangle=\K(A\-\Inj)$.
\end{proof}

Recall from subsection 3.1 the subcomplexes $C^{\bu}_n$ of the cokernel complex $C^{\bu}$ for $n\geq 1$
and $C^{\bullet}=\lim\limits_{\xrightarrow[] {}} C_{n}^{\bullet}$. Then we have an exact sequence of complexes \begin{align*}\CD
 0@>>> \bigoplus^{\infty}\limits_{n=1}C_{n}^{\bullet}@>{1-\shift}>>\bigoplus^{\infty}\limits_{n=1}C_{n}^{\bullet} @>>> C^{\bullet}@>>>0,
\endCD
\end{align*} where the restriction of the chain map $1\-\shift$  on $C^{\bu}_n$ equals $\begin{pmatrix}1\\-i_{n}^{\bu}\end{pmatrix}:C^{\bu}_n\xrightarrow []{}C^{\bu}_n\oplus C_{n+1}^{\bu}\subseteq \bigoplus^{\infty}\limits_{n=1}C_{n}^{\bullet}$ .
This sequence is in fact split exact in each component, since each component of the complex $\bigoplus^{\infty}\limits_{n=1}C_{n}^{\bullet}$ is an injective module. So it gives rise to a triangle
\begin{align}\label{eq:v}\CD
  \bigoplus^{\infty}\limits_{n=1}C_{n}^{\bullet} @>1-\shift>>\bigoplus^{\infty}\limits_{n=1}C_{n}^{\bullet}@>>> C^{\bullet} @>>> (\bigoplus^{\infty}\limits_{n=1}C_{n}^{\bullet})[1]
\endCD
\end{align} in $\mathbf{K}(A\-\Inj)$.
In other words, $C^{\bullet}$ is the homotopy colimit of $C^{\bu}_n$;
see \cite[Definition 2.1]{bn}.

\begin{prop}\label{clocalizing}  The cokernel complex $C^{\bullet}$ belongs to ${\rm Loc}\langle \mathcal{E}_1^{\bullet}\rangle\subseteq \mathbf{K}(A\-\Inj)$.
\end{prop}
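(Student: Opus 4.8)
The plan is to exploit the triangle $(\ref{eq:v})$, which exhibits $C^{\bu}$ as the homotopy colimit of the chain of subcomplexes $C_n^{\bu}$. Since ${\rm Loc}\langle \E_1^{\bu}\rangle$ is a triangulated subcategory of $\mathbf{K}(A\-\Inj)$ that is closed under arbitrary coproducts, the triangle $(\ref{eq:v})$ reduces the statement to showing that the coproduct $\bigoplus_{n\geq 1}C_n^{\bu}$ lies in ${\rm Loc}\langle \E_1^{\bu}\rangle$, and by closure under coproducts this reduces further to proving $C_n^{\bu}\in{\rm Loc}\langle \E_1^{\bu}\rangle$ for every $n\geq 1$. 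I would establish the latter by induction on $n$.

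The base case $n=1$ is immediate, as $C_1^{\bu}=\E_1^{\bu}$. For the inductive step, recall that $\E_{n+1}^{\bu}$ is by construction the cokernel of the inclusion chain map $i_n^{\bu}\colon C_n^{\bu}\to C_{n+1}^{\bu}$. In each degree this inclusion splits, since $C_n^l$ is an injective $A$-module (a coproduct of the modules $I_i$); hence the short exact sequence of complexes $0\to C_n^{\bu}\xrightarrow[]{i_n^{\bu}} C_{n+1}^{\bu}\to \E_{n+1}^{\bu}\to 0$ is degreewise split and therefore yields a triangle
$$C_n^{\bu}\xrightarrow[]{i_n^{\bu}} C_{n+1}^{\bu}\to \E_{n+1}^{\bu}\to C_n^{\bu}[1]$$
in $\mathbf{K}(A\-\Inj)$. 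Granting the inductive hypothesis $C_n^{\bu}\in{\rm Loc}\langle \E_1^{\bu}\rangle$, it then suffices to place $\E_{n+1}^{\bu}$ in ${\rm Loc}\langle \E_1^{\bu}\rangle$, for then $C_{n+1}^{\bu}$ sits in the above triangle between two objects of ${\rm Loc}\langle \E_1^{\bu}\rangle$ and so belongs to it as well.

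Thus the crux is to show $\E_{n+1}^{\bu}\in{\rm Loc}\langle \E_1^{\bu}\rangle$. Here I would invoke Proposition $\ref{coproductinj}$, which gives the isomorphism $\E_{n+1}^{\bu}\cong\bigoplus_{j\in Q_0}(E_j^{\bu}[n])^{(Q_{n, j})}$ of complexes. Since ${\rm Loc}\langle \E_1^{\bu}\rangle$ is closed under shifts and under arbitrary coproducts, it is enough to know that $E_j^{\bu}\in{\rm Loc}\langle \E_1^{\bu}\rangle$ for each vertex $j\in Q_0$. This is precisely where the decomposition $\E_1^{\bu}=C_1^{\bu}=\bigoplus_{j\in Q_0}E_j^{\bu}$ enters: because $Q_0$ is finite, each $E_j^{\bu}$ is a direct summand of $\E_1^{\bu}$ and hence lies in thick$\langle \E_1^{\bu}\rangle$, and thick$\langle \E_1^{\bu}\rangle\subseteq{\rm Loc}\langle \E_1^{\bu}\rangle$ by \cite[Proposition 3.2]{bn}. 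This closes the induction, and the reduction in the first paragraph then delivers $C^{\bu}\in{\rm Loc}\langle \E_1^{\bu}\rangle$.

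The step I expect to require the most care is the passage from the short exact sequences of complexes to genuine triangles in the homotopy category: one must verify that the inclusions $i_n^{\bu}$ are split in each degree, uniformly in $n$. This rests on the componentwise decompositions $C_{n+1}^l=C_n^l\oplus(\text{a coproduct of injective modules})$ built into the definition of the $C_n^{\bu}$, together with the injectivity of every component; the remaining ingredients are the formal closure properties of localizing and thick subcategories, which are routine once these splittings are in place.
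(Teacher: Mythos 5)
Your proposal is correct and follows essentially the same route as the paper: reduce via the homotopy-colimit triangle $(\ref{eq:v})$ to the subcomplexes $C_n^{\bu}$, handle those by induction using the degreewise split exact sequences $0\to C_n^{\bu}\to C_{n+1}^{\bu}\to \E_{n+1}^{\bu}\to 0$, and place each $\E_{n+1}^{\bu}$ in ${\rm Loc}\langle\E_1^{\bu}\rangle$ via Proposition \ref{coproductinj} together with the decomposition $\E_1^{\bu}=\bigoplus_{j\in Q_0}E_j^{\bu}$. You merely make explicit two points the paper leaves implicit, namely the componentwise splitting of the inclusions $i_n^{\bu}$ and the passage through ${\rm thick}\langle\E_1^{\bu}\rangle$ to extract the summands $E_j^{\bu}$.
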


\begin{proof} Recall $\E^{\bu}_1=C^{\bu}_1=\bigoplus_{j\in Q_0}E^{\bu}_j$. By Proposition \ref{coproductinj} we have that $\mathcal{E}_{n}^{\bullet}\in {\rm Loc}\langle \mathcal{E}^{\bullet}_1\rangle$ for $n\geq 1$.
The exact sequence \begin{align*}\CD
 0@>>> C_{n}^{\bullet}@>{i^{\bu}_n}>>C_{n+1}^{\bullet}@>>> \E_{n+1}^{\bullet}@>>>0
\endCD
\end{align*} induces a triangle
\begin{align*}\CD
  C_{n}^{\bullet} @>i^{\bu}_n>>C_{n+1}^{\bullet}@>>>\E_{n+1}^{\bullet} @>>>C_{n}^{\bullet}[1]
\endCD
\end{align*} in the category $\mathbf{K}(A\-\Inj)$.
By induction,
we have that $C_{n}^{\bullet}\in {\rm Loc}\langle \mathcal{E}_1^{\bullet}\rangle$ for each $n\geq 1$. The triangle $(\ref{eq:v})$ implies
$C^{\bullet}\in {\rm Loc}\langle \mathcal{E}_1^{\bullet}\rangle$. Then we are done.
\end{proof}

\subsection{An explicit compact generator}

Denote by ${\mathbf{K}_{\rm ac}}(A\-\Inj)$ the full subcategory of $\mathbf{K}(A\-\Inj)$
formed by acyclic complexes of injective $A$-modules. This category is
a compactly generated triangulated category such that its subcategory consisting of compact objects is triangle equivalent to the singularity category
of $A$; see \cite[Corollary 5.4]{kr}. The category ${\mathbf{K}_{\rm ac}}(A\-\Inj)$
is called the stable derived category of $A$; see \cite[Definition 5.1]{kr}.

Here, we recall that the singularity category $\D_{\rm sg}(A)$ of $A$ is
the quotient triangulated category of $\D^{b}(A\-\mod)$ by
the full subcategory formed by perfect complexes; see \cite{bu,o}.
A complex in $\D^{b}(A\-\mod)$ is perfect provided that it is
isomorphic to a bounded complex consisting of finitely generated projective modules.
We mention that the singularity category $\D_{\rm sg}(A)$ is described by \cite[Theorem 7.2]{s} and \cite[Theorem 3.8]{c1}, in the case that $A$ is with
radical square zero.

The following theorem is the main result of this section.

\begin{thm}\label{tc} Let $Q$ be a finite quiver without sinks, and let $A=kQ/J^2$. Then the injective Leavitt complex $\mathcal{I}^{\bullet}$ of $Q$
is a compact generator of the category ${\rm \mathbf{K}_{ac}}(A\-\Inj)$.
\end{thm}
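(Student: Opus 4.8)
The plan is to deduce the statement from the abstract criterion of Lemma~\ref{comg}, applied to the ambient category $\mathcal{T}=\K(A\-\Inj)$ and the subcategory $\mathcal{T}'=\K_{\ac}(A\-\Inj)$. First I would record that the defining short exact sequence $0\to M^{\bu}\xrightarrow{\iota^{\bu}}\I^{\bu}\to C^{\bu}\to 0$ from subsection~\ref{subsection31} is split in each degree (indeed $\I^{l}=M^{l}\oplus C^{l}$ for $l\geq 0$, and $M^{l}=0$ otherwise), so it induces a triangle $M^{\bu}\xrightarrow{\iota^{\bu}}\I^{\bu}\to C^{\bu}\to M^{\bu}[1]$ in $\K(A\-\Inj)$. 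Here $M^{\bu}$ is a compact generator of $\K(A\-\Inj)$ by Lemma~\ref{cominj}, and $\mathcal{T}'$ is a triangulated subcategory closed under coproducts, since a coproduct of acyclic complexes is acyclic and a coproduct of injective $A$-modules is again injective ($A$ being Artinian). Because $\I^{\bu}$ is acyclic by Proposition~\ref{propacy}, we have $\I^{\bu}\in\mathcal{T}'$. Thus to apply Lemma~\ref{comg} with $X=M^{\bu}$, $Y=\I^{\bu}$ and $Z=C^{\bu}$, and conclude that $\I^{\bu}$ is a compact generator of $\K_{\ac}(A\-\Inj)$, it remains only to check the orthogonality $\Hom_{\K(A\-\Inj)}(C^{\bu},T')=0$ for every $T'\in\K_{\ac}(A\-\Inj)$.

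For this orthogonality I would argue by reduction. By Proposition~\ref{clocalizing} the cokernel complex satisfies $C^{\bu}\in{\rm Loc}\langle\E_{1}^{\bu}\rangle$, so Lemma~\ref{cc} reduces the problem to showing $\Hom_{\K(A\-\Inj)}(\E_{1}^{\bu},T')=0$ for all acyclic $T'$. By Corollary~\ref{cq} the complex $\E_{1}^{\bu}[-1]$ is an injective resolution of the regular module $A$; equivalently, $\E_{1}^{\bu}$ is a bounded-below complex of injectives, concentrated in degrees $\geq -1$, with $H^{-1}(\E_{1}^{\bu})\cong A$ and $H^{i}(\E_{1}^{\bu})=0$ for $i\neq -1$. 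The crucial feature to exploit is that $A$ is a projective $A$-module.

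The hard part will be this last vanishing, which I would prove by constructing a contracting homotopy for an arbitrary chain map $f\colon\E_{1}^{\bu}\to T'$, by upward induction on the degree starting at $-1$. The mechanism combines three facts. First, $\ker(d_{\E_{1}}^{-1})=H^{-1}(\E_{1}^{\bu})\cong A$ is projective, and the restriction of $f^{-1}$ to this kernel lands in the cocycles of $T'$, which coincide with its coboundaries by acyclicity; projectivity of $A$ then lets this restriction be lifted through the differential of $T'$, producing the initial piece of the homotopy. Second, each $T'^{\,n}$ is injective, so at every stage a homotopy defined on a submodule of $\E_{1}^{n}$ extends to all of $\E_{1}^{n}$. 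Third, exactness of $\E_{1}^{\bu}$ in degrees $\geq 0$ forces the inductive defect to be a coboundary at each subsequent step, by a $d\circ d=0$ computation, so that no further appeal to projectivity is needed; boundedness below keeps the induction well-founded. This shows that $f$ is null-homotopic, giving $\Hom_{\K(A\-\Inj)}(\E_{1}^{\bu},T')=0$ and completing the proof. I expect the bookkeeping of this homotopy construction, rather than any conceptual difficulty, to be the main obstacle, with all remaining steps being formal consequences of the results already established.
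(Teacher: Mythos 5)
Your proposal is correct and follows essentially the same route as the paper: the same degreewise-split triangle $M^{\bu}\xrightarrow{\iota^{\bu}}\I^{\bu}\to C^{\bu}\to M^{\bu}[1]$, the same appeal to Lemma~\ref{cominj} and Lemma~\ref{comg}, and the same reduction of the required orthogonality $\Hom_{\K(A\-\Inj)}(C^{\bu},T')=0$ to the single complex $\E_1^{\bu}$ via Proposition~\ref{clocalizing} and Lemma~\ref{cc}. The only point of divergence is the final vanishing: where the paper invokes \cite[Lemma 2.1]{kr} together with the isomorphism $\Hom_{\K(A\-\Mod)}(A,Y^{\bu}[n])\cong H^{n}(Y^{\bu})$ to conclude $\Hom_{\K(A\-\Inj)}(\E_1^{\bu},T')\cong\Hom_{\K(A\-\Mod)}(A[1],T')=0$, you reprove that comparison by hand with an explicit contracting homotopy (built from projectivity of $A\cong\ker\widetilde{\p}^{-1}$, injectivity of the components of $T'$, acyclicity of $T'$, and exactness of $\E_1^{\bu}$ in degrees $\geq 0$) --- a sound and more self-contained substitute for the citation, but not a genuinely different strategy.
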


\begin{proof} By Proposition \ref{propacy}, we have $\mathcal{I}^{\bullet}\in{\rm \mathbf{K}_{ac}}(A\-\Inj)$.
Recall the cokernel complex
$C^{\bullet}={\rm Coker}(\iota^{\bullet})$, where $\iota^{\bullet}: M^{\bullet}\xrightarrow[]{}
\mathcal{I}^{\bullet}$ is the inclusion chain map. Then we have the following exact sequence of complexes
\begin{align*}\CD
 0@>>> M^{\bullet}@>{\iota^{\bu}}>>\I^{\bullet} @>>> C^{\bullet}@>>>0.
\endCD
\end{align*} The above sequence splits in each component, since $M^{\bu}$ consists of injective modules.
This gives rise to a triangle in $\mathbf{K}(A\-\Inj)$\begin{align}\label{eq:q}\CD
  M^{\bullet}@>\iota^{\bullet}>> \mathcal{I}^{\bullet}@>>>C^{\bullet} @>>> M^{\bullet}[1].
\endCD
\end{align}

For any complex $X^{\bu}\in\K_{\rm ac}(A\-\Inj)$, we have ${\rm Hom}_{\mathbf{K}(A\-\Mod)}(A, X^{\bullet})=0$. Here, we recall the canonical isomorphism ${\rm Hom}_{\mathbf{K}(A\-\Mod)}(A, Y^{\bullet}[n])\cong H^{n}(Y^{\bu})$ for each complex $Y^{\bu}$ and $n\in\Z$.
Recall from Corollary \ref{cq} that the complex $\E^{\bu}_1[-1]$ is an injective resolution of $A$.
We apply \cite[Lemma 2.1]{kr} to deduce the following isomorphism
$${\rm Hom}_{\mathbf{K}(A\-\Inj)}(\mathcal{E}_1^{\bullet}, X^{\bullet})\cong{\rm Hom}_{\mathbf{K}(A\-\Mod)}(A[1], X^{\bullet})=0.$$

Recall from Proposition \ref{clocalizing} that $C^{\bullet}\in {\rm Loc}\langle \mathcal{E}_1^{\bullet}\rangle$. Then by Lemma \ref{cc} we have ${\rm Hom}_{\mathbf{K}(A\-\Inj)}(C^{\bullet}, X^{\bullet})=0$ for any $X^{\bullet}\in \mathbf{K}_{\rm ac}(A\-\Inj)$. Recall from Lemma \ref{cominj}
that $M^{\bu}$ is a compact generator of $\K(A\-\Inj)$.
By the triangle ($\ref{eq:q}$) and Lemma \ref{comg}, we are done.
\end{proof}

\section{The injective Leavitt complex as a differential graded bimodule}
\label{sfour}

In this section, we endow the injective Leavitt complex with a differential graded module structure over the corresponding Leavitt path algebra.

\subsection{The Leavitt path algebra and module structure}
\label{subsection41}

Let $k$ be a field and $Q$ be a finite quiver without sinks.
We will endow the injective Leavitt complex of $Q$ with a Leavitt path algebra module structure.
Recall from \cite{ap, amp}
the notion of the Leavitt path algebra.

\begin{defi} \label{defleavitt}
The \emph{Leavitt path algebra} $L_{k}(Q)$ of $Q$
is the $k$-algebra generated by the set $\{e_{i}\;|\;i\in Q_{0}\}\cup \{\alpha\;|\;\alpha\in Q_{1}\}
\cup\{\alpha^{*}\;|\;\alpha\in Q_{1}\}$ subject to the following relations:

(0) $e_{i}e_{j}=\delta_{i, j}e_{i}$ for every $i, j\in Q_{0}$;

(1) $e_{t(\alpha)}\alpha=\alpha e_{s(\alpha)}=\alpha$ for all $\alpha\in Q_{1}$;

(2) $e_{s(\alpha)}\alpha^{*}=\alpha^{*} e_{t(\alpha)}=\alpha^{*}$ for all $\alpha\in Q_{1}$;

(3) $\alpha\beta^{*}=\delta_{\alpha, \beta}e_{t(\alpha)}$ for all $\alpha,\beta\in Q_{1}$;

(4) $\sum_{\{\alpha\in Q_{1}\;|\;s(\alpha)=i\}}\alpha^{*}\alpha=e_{i}$ for every $i\in Q_{0}$.\hfill $\square$
\end{defi}

Here, $\delta$ is the Kronecker symbol. The relations $(3)$ and $(4)$ are called
\emph{Cuntz-Krieger relations}. The elements $\alpha^{*}$ for $\alpha\in Q_{1}$ are called \emph{ghost arrows}.

There is an alternative description of $L_k(Q)$.
Let $\overline{Q}$ be the \emph{double quiver} obtained from $Q$ by adding for each arrow $\alpha$ in $Q$
an arrow $\alpha^{*}$ in the opposite direction.
Then the Leavitt path algebra $L_{k}(Q)$ is isomorphic to the quotient algebra of the path algebra $k\overline{Q}$ of $\overline{Q}$
modulo the ideal generated by $\{\alpha\beta^{*}-\delta_{\alpha, \beta}e_{t(\alpha)},
\sum_{\{\g\in Q_{1}\;|\; s(\g)=i\}}\g^{*}\g-e_{i}\;|\;\alpha,\beta\in Q_{1}, i\in Q_{0}\}.$

If $p=\alpha_{n}\cdots\aa_2\alpha_{1}$ is a path in $Q$ of length $n\geq 1$, we define $p^{*}=\alpha_{1}^*\aa_2^*\cdots\alpha_{n}^*$.
For convention, we set $e_{i}^*=e_{i}$ for $i\in Q_{0}$.
We observe by $(2)$ that for paths $p, q$ in $Q$, $p^*q=0$ for $t(p)\neq t(q)$.
Consider the relation $(3)$. We have the following fact; see \cite[Lemma 3.1]{t}.

\begin{lem}\label{mul}Let $p$, $q$, $\g$ and
$\eta$ be paths in $Q$ with $t(p)=t(q)$ and $t(\g)=t(\eta)$.
Then in $L_k(Q)$ we have
\begin{equation*}
(p^*q)(\g^*\eta)=
\begin{cases}
(\g' p)^*\eta, & \text{if $\g=\g'q$};\\
p^*\eta, & \text{if $q=\g$};\\
p^*(q'\eta), & \text{if $q=q'\g$};\\
0, &\text{otherwise}.
\end{cases}
\end{equation*} Here, $\g'$ and $q'$ are some nontrivial paths in $Q$.\hfill $\square$
\end{lem}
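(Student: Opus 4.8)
The plan is to collapse the four-factor product to the single product $q\g^{*}$ in the middle: by associativity $(p^{*}q)(\g^{*}\eta)=p^{*}(q\g^{*})\eta$, so once $q\g^{*}$ has been simplified the outer factors $p^{*}$ and $\eta$ are merely reattached. Thus the entire content of the lemma is the evaluation of $q\g^{*}$, and the four cases of the statement will record the four ways in which this evaluation can terminate. The hypotheses $t(p)=t(q)$ and $t(\g)=t(\eta)$ ensure that the outer products $p^{*}q$ and $\g^{*}\eta$ are individually nonzero, so nothing is lost in this reduction.

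First I would write $q=\b_{m}\cdots\b_{1}$ and $\g=\aa_{n}\cdots\aa_{1}$ as products of arrows, giving $q\g^{*}=\b_{m}\cdots\b_{1}\aa_{1}^{*}\cdots\aa_{n}^{*}$, in which the only adjacent arrow/ghost-arrow pair is $\b_{1}\aa_{1}^{*}$ at the source end. The engine of the argument is the Cuntz--Krieger relation $(3)$, which yields $\b_{1}\aa_{1}^{*}=\delta_{\b_{1},\aa_{1}}e_{t(\b_{1})}$. If $\b_{1}\neq\aa_{1}$ -- in particular whenever $s(q)\neq s(\g)$ -- the product is zero, which is the ``otherwise'' case. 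If $\b_{1}=\aa_{1}$, then relations $(1)$ and $(2)$ absorb the idempotent $e_{t(\b_{1})}=e_{s(\b_{2})}=e_{s(\aa_{2})}$ into the neighbouring factors, leaving $q\g^{*}=\widetilde{q}\,\widetilde{\g}^{*}$, where $\widetilde{q}$ and $\widetilde{\g}$ are the source-end truncations. This is exactly the shape needed to induct on $\min\{l(q),l(\g)\}$.

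I would then run the induction, peeling off one matched pair of source-end arrows at a time. When the arrows agree all the way down, the recursion stops in precisely three ways. If $q$ is exhausted first, the remaining factor is $\g'^{*}$ with $\g=\g'q$ and $\g'$ nontrivial, so $p^{*}(q\g^{*})\eta=p^{*}\g'^{*}\eta=(\g'p)^{*}\eta$, using $(\g'p)^{*}=p^{*}\g'^{*}$. If $\g$ is exhausted first, the remaining factor is $q'$ with $q=q'\g$, so $p^{*}(q\g^{*})\eta=p^{*}q'\eta=p^{*}(q'\eta)$. If both are exhausted simultaneously, then $q=\g$, the remaining factor is the idempotent $e_{t(q)}$, and $p^{*}(q\g^{*})\eta=p^{*}\eta$. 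Finally, if some pair $\b_{j}\neq\aa_{j}$ is encountered before either path runs out, relation $(3)$ produces a zero, completing the ``otherwise'' case.

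I do not expect a genuine obstacle: the result is a bookkeeping consequence of iterating relation $(3)$. The only points needing care are the base cases of the induction, where a truncation $\widetilde{q}$ or $\widetilde{\g}$ degenerates to a trivial path $e_{i}$ (so that $e_{i}^{*}=e_{i}$ and relation $(0)$ is invoked to absorb it), and keeping the three nonzero outcomes correctly aligned with the three prefix relations $\g=\g'q$, $q=\g$ and $q=q'\g$. No structural input about $L_{k}(Q)$ beyond its defining relations is required.
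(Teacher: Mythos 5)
Your proof is correct. The paper does not prove this lemma itself --- it is quoted from Tomforde \cite[Lemma 3.1]{t} --- and your argument (collapse to the middle product $q\gamma^{*}$ by associativity, then peel off matched source-end arrow/ghost-arrow pairs with the Cuntz--Krieger relation (3) until one path is exhausted or a mismatch produces zero) is exactly the standard proof that the citation defers to.
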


By the above lemma, we deduce that
the Leavitt path algebra $L_k(Q)$ is spanned by the following set
$$\{p^*q\;|\; p, q \text{~are paths in~} Q \text{~with~} t(p)=t(q)\};$$
see \cite[Lemma 1.5]{ap}, \cite[Corollary 3.2]{t} or \cite[Corollary 2.2]{c2}. By $(4)$, this set is not $k$-linearly independent in general.

The following result is \cite[Theorem 1]{aajz}.

\begin{lem} \label{lbasis} The following elements form a $k$-basis of the Leavitt path algebra $L_k(Q)$:\begin{enumerate}\item[(1)] $e_i$, $i\in Q_0$;

\item[(2)]  $p, p^*$, where $p$ is a nontrivial path in $Q$;

\item[(3)]  $p^*q$ with $t(p)=t(q)$, where
$p=\alpha_{m}\cdots\alpha_{1}$ and $q=\beta_{n}\cdots\beta_{1}$ are nontrivial paths of $Q$ such that $\alpha_{m}\neq \beta_{n}$, or $\alpha_{m}=\beta_{n}$ which is not special.\hfill $\square$\end{enumerate}
\end{lem}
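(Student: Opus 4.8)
The plan is to realise $L_{k}(Q)$ through the presentation $k\overline{Q}/I$ recalled just after Definition \ref{defleavitt}, where $I$ is generated by the Cuntz--Krieger relations, and then to obtain the basis as the set of normal forms of an explicit confluent rewriting system on paths of the double quiver $\overline{Q}$. I would orient the defining relations into two reduction rules: from relation~(3), $\alpha\beta^{*}\rightsquigarrow\delta_{\alpha,\beta}e_{t(\alpha)}$ for arrows $\alpha,\beta$ with $s(\alpha)=s(\beta)$; and, fixing at each non-sink vertex $i$ the special arrow $\gamma$, from relation~(4) the rule $\gamma^{*}\gamma\rightsquigarrow e_{i}-\sum_{\beta\in S(\gamma)}\beta^{*}\beta$. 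A monomial is then irreducible precisely when it contains no factor $\alpha\beta^{*}$ and no factor $\gamma^{*}\gamma$ with $\gamma$ special. Using Lemma \ref{mul}, every irreducible monomial has the form $p^{*}q$ with $t(p)=t(q)$, and the absence of a $\gamma^{*}\gamma$ factor at the junction says exactly that $p$ and $q$ do not simultaneously end in the special arrow. Thus the irreducible monomials are precisely the elements listed in (1), (2) and (3), and the reduction rules immediately show that these span $L_{k}(Q)$. This is the easy half, and it uses only the spanning observation recorded before Lemma \ref{lbasis}.

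For linear independence I would invoke Bergman's Diamond Lemma, so that the remaining work is to exhibit a well-founded monomial order compatible with both rules and then to verify confluence of all overlap ambiguities. For the order I would take the graded-lexicographic order on paths of $\overline{Q}$: compare lengths first, and break ties lexicographically using an alphabet order in which each special ghost arrow $\gamma^{*}$ exceeds every non-special $\beta^{*}$ starting at the same vertex. Since $Q$ is finite there are only finitely many paths of each length, so this order satisfies the descending chain condition, and it is plainly compatible with concatenation. Rule~(3) strictly shortens monomials, while rule~(4) sends $\gamma^{*}\gamma$ to the shorter $e_{i}$ and to the equal-length monomials $\beta^{*}\beta$, each strictly smaller lexicographically because $\gamma^{*}>\beta^{*}$; hence both rules are decreasing.

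Because both left-hand sides have length two, the only overlap ambiguities are the length-three words obtained by matching a shared ghost or a shared real letter, namely $\alpha\gamma^{*}\gamma$ (reduce the first two letters by~(3) or the last two by~(4)) and $\gamma^{*}\gamma\beta^{*}$ (reduce the first two by~(4) or the last two by~(3)), with $\gamma$ special and $s(\alpha)=s(\beta)=s(\gamma)$. I would resolve each by computing both branches and checking they reach the same normal form: relations~(1) and~(2), together with the sum $\sum_{\beta}\beta^{*}\beta=e_{i}$, force agreement, the case $\alpha=\gamma$ (resp.\ $\beta=\gamma$) producing $\gamma$ (resp.\ $\gamma^{*}$) and the case $\alpha\neq\gamma$ (resp.\ $\beta\neq\gamma$) cancelling to $0$. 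There are no inclusion ambiguities, so the system is confluent and the Diamond Lemma yields that the irreducible monomials form a $k$-basis, which is exactly the asserted list.

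The genuinely delicate point is the choice of order: since relation~(4) preserves word length, a naive length order fails, and one must build in the lexicographic tie-break making the special arrow maximal and then confirm that this order still descends and remains concatenation-compatible. Once that is fixed, the two confluence computations are short and essentially forced. As an alternative route that would sidestep the Diamond Lemma bookkeeping, one could instead prove independence by exhibiting a faithful graded representation of $L_{k}(Q)$ on which the elements of types (1)--(3) act by visibly independent operators; the $\Z$-grading with $|p^{*}q|=l(q)-l(p)$ reduces the check to each graded component separately. I expect the rewriting approach to give the cleanest complete argument.
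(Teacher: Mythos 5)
Your proof is correct, but note that the paper does not actually prove Lemma \ref{lbasis}: it is quoted as \cite[Theorem 1]{aajz}, so any complete argument is necessarily a different route from the paper's. Your rewriting-system proof is a standard and sound way to obtain the basis. I verified the two overlap ambiguities: for $\alpha\gamma^{*}\gamma$ (with $\gamma$ special and $s(\alpha)=s(\gamma)$) one branch gives $\delta_{\alpha,\gamma}\gamma$ and the other gives $\alpha-\sum_{\beta\in S(\gamma)}\delta_{\alpha,\beta}\beta$, and these coincide in both the case $\alpha=\gamma$ and the case $\alpha\in S(\gamma)$; dually, for $\gamma^{*}\gamma\beta^{*}$ both branches give $\delta_{\gamma,\beta}\gamma^{*}$. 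Hence the system is confluent, and the irreducible words are exactly the elements listed in (1)--(3), which correspond to the admissible pairs of Definition \ref{da} under the bijection $\chi$. Compared with the bare citation, your argument is self-contained and produces an explicit normal-form algorithm; this is essentially the structure that the right module action $(\ref{action2})$--$(\ref{action3})$ and the identity $(\ref{eq:o})$ exploit later in the paper. Two small corrections of presentation rather than substance: the fact that an irreducible monomial has the form $p^{*}q$ follows from the exclusion of the subwords $\alpha\beta^{*}$ alone (every ghost letter must then stand to the left of every real letter), not from Lemma \ref{mul}, which concerns products of two such normal forms; and you should fix the convention that equal-length words are compared lexicographically starting from the leftmost letter, so that $\gamma^{*}\gamma>\beta^{*}\beta$ is decided at the ghost letters $\gamma^{*}>\beta^{*}$ --- comparing from the right would force you to also declare an order on the real arrows.
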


We denote by $\Lambda$ the $k$-basis of $L_k(Q)$ given in Lemma \ref{lbasis}.
Define a map $\chi: \La\xra\bigcup_{l\in\Z, i\in Q_{0}}\Bb^l_i$ by $\chi(e_i)=(e_i, e_i)$, $\chi(p)=(e_{t(p)},p)$, $\chi(p^*)=(p, e_{t(p)})$ and $\chi(p^*q)=(p, q)$.
The map $\chi$ is bijective. Then we identify $\La$ with the set of admissible pairs in $Q$; see Definition \ref{da}.
A nonzero element $x$ in $L_k(Q)$ can be written uniquely in the following form
\begin{equation*}
x=\sum_{i=1}^{m}\lambda_{i}p_{i}^{\ast}q_{i}
\end{equation*}
with $\lambda_{i}\in k$ nonzero scalars
and $(p_{i}, q_{i})$ pairwise distinct admissible pairs in $Q$.

From now on, $Q$ is a finite quiver without sinks and $B=L_k(Q)^{\rm op}$ is the opposite algebra of $L_k(Q)$. The multiplication $``\c"$ in $B$ is given by $a\c b=ba$.

We define a right $B$-module structure on $\I^{\bullet}$.
For each vertex $j\in Q_{0}$ and each arrow $\alpha\in Q_{1}$, we define the right action $``\cdot"$ on
$\mathcal{I}^{l}$ for any $l\in\Z$ as follows.
For any element $x\zeta_{(p, q)}\in I_{i}\zeta_{(p, q)}$ with $i\in Q_{0}$ and
$(p, q)\in \mathbf{B}^{l}_{i}$, we set

\begin{equation}\label{action1} x\zeta_{(p, q)}\cdot e_{j}=\delta_{j, s(p)}x\zeta_{(p, q)};\end{equation}

\begin{equation}\label{action2}x\zeta_{(p, q)}\cdot \alpha=\begin{cases}
 \delta_{\alpha, \alpha_{1}}x\zeta_{(\widetilde{p}, q)}, & \text{if $p=\widetilde{p}\alpha_{1}$};\\
 \delta_{s(\alpha), t(q)}x\zeta_{(e_{t(\alpha)}, \alpha q)}, & \text{if $l(p)=0$};
\end{cases}
\end{equation}

\vskip 5pt

\begin{equation}\label{action3}x\zeta_{(p, q)}\cdot \alpha^{*}=\begin{cases}
x\zeta_{(e_{s(\alpha)}, \widehat{q})}-\sum\limits_{\beta\in S(\alpha)}
x\zeta_{(\beta, \beta\widehat{q})}, & \begin{matrix}\text{if~}l(p)=0, ~~q=\alpha\widehat{q}\\ \text{and~~} \alpha \text{~~is special;}\end{matrix}\\
\delta_{s(p),t(\alpha)}x\zeta_{(p\alpha,  q)}, & \text{otherwise}.
\end{cases}
\end{equation}
Here for the notation, a path $p=\aa_n\cdots \aa_{2}\aa_1$ of length $n\geq 2$ has two truncations
$\widetilde{p}=\aa_n\cdots \aa_{2}$ and $\widehat{p}=\aa_{n-1}\cdots \aa_1$. If $p=\aa$ is an arrow, $\widetilde{p}=e_{t(\aa)}$ and $\widehat{p}=e_{s(\aa)}$.
We recall that $S(\alpha)=\{\beta\in Q_{1}\;| \; s(\beta)=s(\alpha), \beta\neq\alpha\}$ for a special arrow $\aa$.

We have the following observations.
\begin{equation}
\begin{cases}
\begin{split}
\label{remarkaction}x\zeta_{(p, q)}\cdot \alpha=0, & \text{~~if $s(\alpha)\neq s(p)$};\\
x\zeta_{(p, q)}\cdot \alpha^{*}=0, &\text{~~if $s(p)\neq t(\alpha)$}.
\end{split}
\end{cases}
\end{equation}


We have a left $L_k(Q)$-action on $\I^{\bu}$ by the right $B$-action. But we write $(\ref{action1}), (\ref{action2})$ and $(\ref{action3})$ as right $B$-action to avoid confusion of two left actions, since the injective Leavitt complex $\I^{\bu}$ has already a left $A$-action.

\begin{lem} \label{lmodule} The above action makes the injective Leavitt complex $\mathcal{I}^{\bullet}$ of $Q$
a right $B$-module.
\end{lem}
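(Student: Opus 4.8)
The plan is to exploit that giving a right $B$-module structure on the graded space $\I^{\bu}$ is the same as giving a left $L_k(Q)$-module structure, via $a\cdot m=m\cdot a$, since $B=L_k(Q)^{\rm op}$ and hence $B^{\rm op}=L_k(Q)$. As $L_k(Q)$ is presented (Definition \ref{defleavitt}) by the generators $\{e_i\}\cup\{\aa\}\cup\{\aa^{\a}\}$ subject to the relations (0)--(4), it suffices to verify that the operators $m\mapsto m\cdot e_i$, $m\mapsto m\cdot\aa$ and $m\mapsto m\cdot\aa^{\a}$ of \eqref{action1}--\eqref{action3} are well defined, homogeneous of degrees $0,1,-1$, and satisfy these relations read in the opposite order: a relation $ab=c$ in $L_k(Q)$ becomes the requirement $(m\cdot b)\cdot a=m\cdot c$ for all $m$. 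Unitality $\sum_{i\in Q_0}m\cdot e_i=m$ is immediate from \eqref{action1}. Well-definedness amounts to checking that each pair produced on the right-hand sides is again admissible in the sense of Definition \ref{da}; the only point needing care is that every $\b\in S(\aa)$ is non-special, because $\aa$ is the unique special arrow at $s(\aa)$, so that the pairs $(\b,\b\widehat{q})$ occurring in \eqref{action3} are admissible.

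Conceptually, since the vertex $s(q)$ is preserved by all three actions, the space decomposes as $\I^{\bu}\cong\bigoplus_{i\in Q_0}I_i\otimes_k L_k(Q)e_i$, with $B$ acting only on the second tensor factor through the left regular representation of $L_k(Q)$; here I identify admissible pairs with the basis $\La$ of Lemma \ref{lbasis} by the bijection $\chi$, so that the summand indexed by $i$ corresponds to $L_k(Q)e_i$. From this viewpoint the formulas \eqref{action1}--\eqref{action3} are nothing but the expansions of left multiplication by the generators in the basis $\La$, computed by means of Lemma \ref{mul} and the relations (3) and (4). This explains the shape of the definition and reduces everything to the relation check above.

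The relations (0), (1) and (2) are routine bookkeeping of sources, targets and concatenation of paths: by \eqref{action1} the operator $m\mapsto m\cdot e_j$ is the projection onto the summands with $s(p)=j$, and relations (1), (2) follow from \eqref{remarkaction} together with the observation that the outputs of \eqref{action2} and \eqref{action3} have first component with source $t(\aa)$, respectively $s(\aa)$. The substance lies in the two Cuntz--Krieger relations (3) and (4), and I expect relation (4) to be the main obstacle. For (3) one must establish $(m\cdot\b^{\a})\cdot\aa=\delta_{\aa,\b}\,m\cdot e_{t(\aa)}$; the nontrivial branch is $l(p)=0$ with $q$ beginning in the special arrow $\b$, where the correction terms $\sum_{\g\in S(\b)}x\z_{(\g,\g\widehat{q})}$ produced by \eqref{action3} are to be cancelled, after applying $\cdot\aa$, against the terms produced by \eqref{action2}.

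For relation (4) one fixes a vertex $i$ and an element $m=x\z_{(p,q)}$ and shows $\sum_{\{\aa\,\mid\,s(\aa)=i\}}(m\cdot\aa)\cdot\aa^{\a}=\delta_{i,s(p)}\,m$. When $l(p)\geq 1$ only the last arrow of $p$ contributes and the identity is immediate; when $i\neq s(p)$ both sides vanish. The decisive case is $l(p)=0$, so $m=x\z_{(e_{t(q)},q)}$ and $i=t(q)$: the unique special arrow $\g$ at $i$ contributes, through the first branch of \eqref{action3}, the term $x\z_{(e_i,q)}-\sum_{\b\in S(\g)}x\z_{(\b,\b q)}$, while each non-special $\b\in S(\g)$ contributes exactly $x\z_{(\b,\b q)}$ through the second branch; summing over all arrows at $i$, the correction terms cancel in pairs and leave $x\z_{(e_i,q)}=m$. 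This cancellation is precisely what the special branch of \eqref{action3} is designed to produce, and it is the computational heart of the lemma.
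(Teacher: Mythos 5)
Your proof is correct and follows essentially the same route as the paper: one verifies the opposite defining relations (0)--(4) of $L_k(Q)$ on the basis elements $x\zeta_{(p, q)}$, with the decisive cancellations in the Cuntz--Krieger relations (3) and (4) coming from the special-arrow branch of \eqref{action3}, exactly as in the paper's computation. The additional observation that the action is the left regular representation of $L_k(Q)$ transported along the bijection $\chi$ is a pleasant conceptual gloss (consistent with Lemma \ref{fact} and the morphisms $\psi$, $\psi_{\b}$ used later), but the verification it reduces to is the same relation check.
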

\begin{proof} It suffices to prove that the above right action on $\I^{\bu}$
satisfies the opposite defining relations of the Leavitt path algebra $L_k(Q)$.

In what follows, we fix $x\z_{(p, q)}\in I_i\z_{(p, q)}\subseteq \I^{l}$.
For $(0)$, we observe that $x\zeta_{(p, q)}\cdot (e_{j}\circ e_{j'})=\delta_{j, j'}
x\zeta_{(p, q)}\cdot e_{j}.$

For $(1)$, we have that\[
\begin{aligned}
x\zeta_{(p, q)}\cdot (e_{s(\alpha)}\circ \alpha)&=(x\zeta_{(p, q)}\cdot e_{s(\alpha)})\cdot\alpha\\
&=\delta_{s(\alpha), s(p)}x\zeta_{(p, q)}\cdot \alpha\\
&=x\zeta_{(p, q)}\cdot \alpha,
\end{aligned}
\] where the last equality uses $(\ref{remarkaction})$.

We have
\[
\begin{aligned}
x\zeta_{(p, q)}\cdot (\alpha\circ e_{t(\alpha)})&=(x\zeta_{(p, q)}\cdot \alpha)\cdot e_{t(\alpha)}\\
&=\begin{cases}
 \delta_{\alpha, \alpha_{1}}\delta_{t(\alpha), t(\alpha_{1})}x\zeta_{(\widetilde{p}, q)}, & \text{if $p=\widetilde{p}\alpha_{1}$};\\
 \delta_{s(\alpha), t(q)}x\zeta_{(e_{t(\alpha)}, \alpha q)}, & \text{if $l(p)=0$}.
\end{cases}\\
&=x\zeta_{(p, q)}\cdot \alpha,
\end{aligned}
\] which proves $(1)$. Similarly, we prove $(2)$.

For $(3)$, we have that
\[
\begin{aligned}
&x\zeta_{(p, q)}\cdot (\beta^{*}\circ \alpha)
=(x\zeta_{(p, q)}\cdot\beta^{*})\cdot \alpha\\
&=\left\{\begin{array}{ll}
\delta_{s(\alpha), s(\beta)}x\zeta_{(e_{t(\alpha)}, \alpha\widehat{q})}-\sum\limits_{\gamma\in S(\beta)}
\delta_{\alpha, \gamma}x\zeta_{(e_{t(\gamma)},\gamma\widehat{q})}, &
\begin{matrix}\text{if~~} l(p)=0, ~~q=\beta\widehat{q}\\ \text{and~~}\beta \text{~~is~~special;} \end{matrix}\\
 \delta_{\alpha,\beta}\delta_{s(p),t(\b)}x\zeta_{(p, q)}, & \text{otherwise}.
 \end{array}\right.\\
&=\delta_{\alpha, \beta}\delta_{s(p),t(\beta)}x\zeta_{(p, q)}\\
&=x\zeta_{(p, q)}\cdot \delta_{\alpha, \beta}e_{t(\alpha)}.
\end{aligned}
\] Here for the third equality, we use the following fact in the case that
$l(p)=0$, $q=\beta\widehat{q}$ and $\beta$ is special: if $\alpha=\beta$,
we have that $s(\alpha)=s(\beta), s(p)=t(\b)$ and $\gamma\neq\alpha$ for all $\gamma\in S(\alpha)$; if $\aa\neq \b$ and $s(\aa)=s(\b)$, we have $\aa\in S(\b)$.

For $(4)$, we fix a vertex $j\in Q_{0}$. If $\alpha\in Q_{1}$ with $s(\alpha)=j$ is special, then
\begin{equation*}(x\zeta_{(p, q)}\cdot\alpha)\cdot\alpha^{*}=\begin{cases}
\delta_{\alpha, \alpha_{1}}x\zeta_{(p, q)}, &\text{if $p=\widetilde{p}\alpha_{1}$};\\
\delta_{j, t(q)}(x\zeta_{(e_{t(q)},  q)}-\sum\limits_{\beta\in S(\alpha)}x\zeta_{(\beta, \beta q)}), & \text{if $l(p)=0$}.\\
\end{cases}
\end{equation*} If $\alpha\in Q_{1}$ with $s(\alpha)=j$ is not special, we have
\begin{equation*}(x\zeta_{(p, q)}\cdot\alpha)\cdot\alpha^{*}=\begin{cases}
\delta_{\alpha, \alpha_{1}}x\zeta_{(p, q)}, & \text{if $p=\widetilde{p}\alpha_{1}$};\\
\delta_{j, t(q)}x\zeta_{(\alpha,  \alpha q)}, & \text{if $l(p)=0$}.
\end{cases}
\end{equation*} We put the two cases together to obtain the following identity.
\begin{equation*}
\begin{split}
x\zeta_{(p, q)}\cdot(\sum_{\{\alpha\in Q_{1}\;|\;s(\alpha)=j\}}\alpha\circ \alpha^{*})
&=\sum_{\{\alpha\in Q_{1}\;|\; s(\alpha)=j\}}(x\zeta_{(p, q)}\cdot\alpha)\cdot \alpha^{*}\\
&=\begin{cases}
\delta_{j, s(p)}x\zeta_{(p, q)}, & \text
{if $p=\widetilde{p}\alpha_{1}$};\\
 \delta_{j, t(q)}x\zeta_{(p, q)}, & \text{if $l(p)=0$}.
 \end{cases}\\
&=\delta_{j, s(p)}x\zeta_{(p, q)}\\
&=x\zeta_{(p, q)}\cdot e_{j}.\qedhere
\end{split}
\end{equation*}
\end{proof}

The following observation will be used in the next subsection. It gives an intuitive description of the $B$-module action on $\I^{\bu}$.

\begin{lem} \label{fact}Let $(p, q)$ be an admissible pair in $Q$.
\begin{enumerate}
\item[(1)] We have $\sum_{i\in Q_0}e_i^{\s}\z_{(e_i, e_i)}\cdot p^*q=e_{s(q)}^{\s}\z_{(p, q)}.$

\item[(2)] For each arrow $\b\in Q_1$ with $t(\b)=i$, we have $\b^{\s}\z_{(e_i, e_i)}\cdot p^*q=\delta_{i, s(q)}\b^{\s}\z_{(p, q)}$.
\end{enumerate}
\end{lem}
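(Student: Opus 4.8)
The plan is to reduce both identities to the single computation
$$x\z_{(e_{s(q)}, e_{s(q)})}\cdot p^*q=x\z_{(p, q)}$$
valid for every admissible pair $(p,q)$ and every $x\in I_{s(q)}$. Granting this, part $(1)$ follows by taking $x=e_{s(q)}^{\s}$ and summing: for $i\neq s(q)$ the element $e_i^{\s}\z_{(e_i, e_i)}$ is annihilated already at the first step, so the sum collapses to the term $i=s(q)$. Part $(2)$ follows by taking $x=\b^{\s}$; since $\b^{\s}\z_{(e_i, e_i)}$ lies in $I_i=I_{t(\b)}$, the first step of the computation annihilates it unless $t(\b)=s(q)$, which produces the factor $\d_{i, s(q)}$.

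To prove the displayed identity I would first factor $p^*q$ into generators of $B$. Writing $p=\aa_m\cdots\aa_1$ and $q=\b_n\cdots\b_1$, the rule $a\c b=ba$ gives
$$p^*q=\b_1\c\cdots\c\b_n\c\aa_m^*\c\cdots\c\aa_1^*$$
in $B$, so by the right module law $m\cdot(a\c b)=(m\cdot a)\cdot b$ the action of $p^*q$ is computed by applying $\b_1,\dots,\b_n$ and then $\aa_m^*,\dots,\aa_1^*$ in turn. Building up $q$ is routine: starting from $x\z_{(e_{s(q)}, e_{s(q)})}$ and using the second case of $(\ref{action2})$, the $j$-th step yields $x\z_{(e_{t(\b_j)},\, \b_j\cdots\b_1)}$, every Kronecker symbol equalling $1$ because $q$ is a genuine path; after $n$ steps one reaches $x\z_{(e_{t(q)}, q)}$.

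The only delicate point, and the unique place where admissibility is used, is the first ghost step $x\z_{(e_{t(q)}, q)}\cdot\aa_m^*$: here the first component has length $0$, so the special first case of $(\ref{action3})$ is a priori available. It would apply exactly when $\b_n=\aa_m$ and $\aa_m$ is special, which is precisely what Definition $\ref{da}$ excludes for an admissible pair; hence the second case applies and gives $x\z_{(\aa_m, q)}$. For every later step $\cdot\aa_{m-1}^*,\dots,\cdot\aa_1^*$ the first component already has positive length, so only the second case of $(\ref{action3})$ can occur, and one successively builds $x\z_{(\aa_m\cdots\aa_j, q)}$, arriving at $x\z_{(p, q)}$. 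Finally I would treat the degenerate admissible pairs separately: $(e_i, e_i)$, where $p^*q=e_i$ and only $(\ref{action1})$ intervenes; $(e_{t(q)}, q)$, where only the $q$-building step is needed; and $(p, e_{t(p)})$, where only the $p$-building step is needed and the special case is vacuous since $q$ is trivial. In each of these the first-step Kronecker symbol again collapses the sum in $(1)$ and yields the factor $\d_{i, s(q)}$ in $(2)$.
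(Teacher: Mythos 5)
Your proposal is correct and follows essentially the same route as the paper, which simply notes that admissibility of $(p,q)$ forces one into the second subcases of $(\ref{action2})$ and $(\ref{action3})$ and leaves the rest as a direct calculation; you have merely written out that calculation generator by generator, correctly identifying the first ghost step as the only place where admissibility is needed.
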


\begin{proof} Since $(p, q)$ is an admissible pair in $Q$, we are in the second subcases in (\ref{action2}) and (\ref{action3}) for the right action of $p^*q$.
Then the identities follow from direct calculation.
\end{proof}

\subsection{The differential graded module structure}
\label{subsection42}

We recall from \cite{ke} some notation on differential graded modules.
Let $A=\bigoplus_{n\in\Z}A^{n}$ be a $\Z$-graded algebra. For a (left) graded $A$-module $M=\bigoplus_{n\in\Z}M^n$,
elements $m$ in $M^{n}$ are said to be homogeneous of degree $n$, denoted by $|m|=n$.

A \emph{differential graded algebra} (dg algebra for short) is a $\Z$-graded algebra $A$ with a differential
$d:A \xra A$ of degree one such that $d(ab)=d(a)b+(-1)^{|a|}ad(b)$ for homogenous elements
$a,b\in A$.

A \emph{(left) differential graded} $A$-module (dg $A$-module for short) $M$
is a graded $A$-module $M=\bigoplus_{n\in \mathbb{Z}}M^{n}$ with a differential $d_{M}:M\xra M$
of degree one such that $d_{M}(a{\cdot} m)=d(a){\cdot} m+(-1)^{|a|}a{\cdot} d_{M}(m)$ for homogenous
elements $a\in A$ and $m\in M$. A morphism of dg $A$-modules is a morphism of $A$-modules
preserving degrees and commuting with differentials.
A \emph{right differential graded} $A$-module (right dg $A$-module for short) $N$
is a right graded $A$-module $N=\bigoplus_{n\in \Z}N^{n}$ with a differential $d_{N}:N\xra N$
of degree one such that $d_{N}(m\cdot a)=d_{N}(m)\cdot a+(-1)^{|m|}m\cdot d(a)$ for homogenous
elements $a\in A$ and $m\in N$. Here, we use central dots to denote the $A$-module action.

Let $B$ be another dg algebra.
Recall that a \emph{dg $A$-$B$-bimodule} $M$ is a left dg $A$-module as well as a right dg
$B$-module such that $(a\cdot m)\cdot b=a\cdot (m\cdot b)$ for $a\in A$, $m\in M$ and $b\in B$.

Recall that $Q$ is a finite quiver without sinks and that $L_k(Q)$ is the Leavitt path algebra of $Q$. Set $|e_{i}|=0$, $|\alpha|=1$ and $|\alpha^{*}|=-1$ for $i\in Q_{0}$ and $\alpha\in Q_{1}$.
Then we obtain a $\Z$-grading on $L_k(Q)$ and write $L_k(Q)=\bigoplus_{n\in\Z} L_k(Q)^{n}$. In what follows, we write $B=L_k(Q)^{
\rm op}$, which is graded by $B^n=L_k(Q)^n$. We view $B$ as a dg algebra with trivial differential.

Consider $A=kQ/J^{2}$ as a dg algebra concentrated on degree zero. Recall the injective Leavitt complex $\I^{\bu}=\bigoplus_{l\in\Z}\I^l$, which is a left dg $A$-module.
By Lemma \ref{lmodule}, $\I^{\bu}$ is a right $B$-module. We observe from (\ref{action1}),
(\ref{action2}) and (\ref{action3}) that $\I^{\bu}$ is a right graded $B$-module.

The following result states that $\I^{\bu}$ is a dg $A$-$B$-bimodule. We mention that it is evident that $\I^{\bu}$ is a graded $A$-$B$-bimodule.
Recall that the differentials on $\I^{\bu}$ are denoted by $\p^l$.

\begin{prop} \label{partialaction} For each $i\in Q_{0}$, $l\in\Z$ and $(p, q)\in \mathbf{B}^{l}_{i}$, let $x\zeta_{(p, q)}\in I_i\zeta_{(p, q)}$. Then for each vertex $j\in Q_{0}$ and each arrow $\beta\in Q_{1}$, we have
\begin{enumerate}
\item[(1)] $\partial^{l}(x\zeta_{(p, q)}\cdot e_{j})=\partial^{l}(x\zeta_{(p, q)})\cdot e_{j};$

\item[(2)] $\partial^{l+1}(x\zeta_{(p, q)}\cdot \beta)=\partial^{l}(x\zeta_{(p, q)})\cdot \beta;$

\item[(3)] $\partial^{l-1}(x\zeta_{(p, q)}\cdot\beta^{*})=\partial^{l}(x
\zeta_{(p, q)})\cdot \beta^{*}.$
\end{enumerate} In other words, the right $B$-action makes $\mathcal{I}^{\bullet}$ a
right dg $B$-module and thus a dg $A$-$B$-bimodule.\hfill $\square$
\end{prop}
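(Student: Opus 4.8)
The plan is to reduce the proposition to the three displayed identities and verify them by a direct case analysis on the basis vectors $x\z_{(p, q)}$ of $\I^{\bu}$. Since $B=L_k(Q)^{\rm op}$ carries the trivial differential, the right dg-module axiom $\p(y\cdot b)=\p(y)\cdot b+(-1)^{|y|}y\cdot d(b)$ collapses to $\p(y\cdot b)=\p(y)\cdot b$; applied to the algebra generators $e_j$, $\b$, $\b^{*}$ of $B$ this is exactly $(1)$, $(2)$ and $(3)$. Together with the evident graded $A$-$B$-bimodule structure and the fact that each $\p^l$ is $A$-linear (which gives the left dg $A$-module condition, as $A$ sits in degree zero with zero differential), these three identities furnish the dg $A$-$B$-bimodule structure. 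By the defining formulas $(\ref{action1})$--$(\ref{action3})$ the right action carries the numerator of $x\z_{(p,q)}$ along unchanged, so the $k$-span of the vectors $e_i^{\s}\z_{(p,q)}$ is a graded $B$-submodule contained in $\ker\p^{\bu}$; hence both sides of $(1)$--$(3)$ vanish when $x=e_i^{\s}$, and it suffices to treat $x=\aa^{\s}$.

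For $(1)$ with $x=\aa^{\s}$, I would note that in each branch of Definition $\ref{definj}$ the differential $\p^l$ preserves the starting vertex $s(p)$ of the first entry of the admissible pair; hence both sides equal $\d_{j, s(p)}\p^l(\aa^{\s}\z_{(p,q)})$ by $(\ref{action1})$. The substance lies in $(2)$ and $(3)$, which I would establish by running the branches of the action in $(\ref{action2})$, $(\ref{action3})$ against the two branches of $\p$. A useful simplification is that the special branch of $\p^l(\aa^{\s}\z_{(p,q)})$ occurs only when $q=e_i$ and $p=\aa\widehat{p}$ with $\aa$ special (forcing $l<0$), whereas the special branch of the $\b^{*}$-action requires $q$ nontrivial; the two special branches therefore never occur together, which trims the analysis considerably. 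In the remaining mixed cases one uses $(\ref{remarkaction})$ to discard terms that vanish for vertex reasons, and Lemma $\ref{partial}$ to rewrite $e^{\s}$-type terms when matching coefficients.

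The main obstacle, and the likeliest source of error, is the matching of the two Cuntz--Krieger sums over $S(\aa)$ and $S(\b)$. In $(3)$ with the $\b^{*}$-action in its special branch (so $l(p)=0$, $q=\b\widehat{q}$ and $\b$ special), the left-hand side applies $\p$ termwise to an $S(\b)$-indexed sum, while the right-hand side acts by $\b^{*}$ on the single term $\p^l(\aa^{\s}\z_{(p,q)})=e_{s(\aa)}^{\s}\z_{(p, q\aa)}$, whose image under $\b^{*}$ again triggers the special branch and produces an $S(\b)$-indexed sum; one must check that these two sums coincide term by term, keeping careful track of which truncation ($\widehat{q}$ versus $\widehat{p}$) appears and of the vertex constraints such as $s(\b)=t(\widehat{q})$. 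Symmetrically, when $\p^l(\aa^{\s}\z_{(p,q)})$ lies in its own special branch, the $S(\aa)$-sum produced by $\p$ must be reproduced by first acting with $\b^{*}$ and then applying $\p$ to each summand; here the remark following Definition $\ref{definj}$ (that the special branch forces $(p, q\aa)$ non-admissible and $l<0$) is what guarantees the correct branch is selected at each step. Once this bookkeeping is organized, most cleanly by fixing the branch of the action first and then reading off which branch of $\p$ applies to the resulting pair, every case reduces to an equality of $e^{\s}$-labelled sums that holds on the nose, and there are no sign subtleties because $B$ has trivial differential.
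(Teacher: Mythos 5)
Your proposal is correct, but it takes a more hands-on route than the paper. You reduce to the generators $e_j,\beta,\beta^{*}$ (legitimate, since $B$ has trivial differential and the action already respects the relations of $B$ by Lemma \ref{lmodule}), dispose of $x=e_i^{\s}$ by noting that the action preserves the ``numerator'' and that all $e^{\s}$-type basis vectors lie in $\Ke\p^{\bu}$, and then check $(2)$ and $(3)$ by running each branch of $(\ref{action2})$--$(\ref{action3})$ against each branch of the differential; your identification of the two critical special-branch cases and the observation that the two special branches never fire simultaneously are exactly the right organizing principles, and the $S(\aa)$- and $S(\b)$-indexed sums do match term by term as you describe. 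The paper avoids this case analysis at this point: it introduces the right $B$-module morphisms $\psi$ and $\psi_{\b}$ with $\psi(p^*q)=e_{s(q)}^{\s}\z_{(p,q)}$ and $\psi_{\b}(p^*q)=\delta_{s(q),t(\b)}\b^{\s}\z_{(p,q)}$, proves the single intertwining identity $(\p^l\circ\psi_{\b})(a)=\psi(a\b)$ for $a\in B^l$ (Lemma \ref{partialmodule}, which is where the special/non-special dichotomy is confronted once, via the Leavitt-algebra computation $(\ref{eq:o})$), and then derives $(1)$--$(3)$ formally by writing $\aa^{\s}\z_{(p,q)}=\psi_{\aa}(p^*q)$ and pushing the generator through $\psi_{\aa}$ and $\psi$. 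The net effect is the same verification, but the paper delegates the Cuntz--Krieger bookkeeping to relations already holding in $L_k(Q)$ and to the already-established $B$-linearity of $\psi,\psi_{\b}$, which keeps the proof of the proposition itself to a few lines and makes Lemma \ref{partialmodule} available for reuse in Section \ref{sfifth}; your version costs more explicit branching here but requires no auxiliary morphisms.
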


We make some preparation for the proof of Proposition \ref{partialaction}.
There is a unique right $B$-module morphism $\psi: B\xra \I^{\bullet}$ with $\psi(1)=\sum_{i\in Q_0}e_i^{\s}\z_{(e_i, e_i)}$. Here, $1$ is the unit of $B$.
For each arrow $\beta\in Q_{1}$, there is a unique right $B$-module morphism $\psi_{\b}: B\xra \I^{\bullet}$ with $\psi_{\b}(1)=\b^{\s}\z_{(e_{t(\b)}, e_{t(\b)})}$.
Then by Lemma \ref{fact} we have \begin{equation}\label{psipsi}\psi(p^*q)=e_{s(q)}^{\s}\z_{(p, q)}
\text{\qquad and\qquad} \psi_{\b}(p^*q)=\delta_{s(q), t(\b)}\b^{\s}\z_{(p, q)}\end{equation} for $p^*q\in\La$.
Here, $\La$ is the basis of $B=L_k(Q)^{\rm op}$ given in Lemma \ref{lbasis}.
It follows that $\psi$ is injective.

We have the following observation.

\begin{lem} \label{partialmodule}
Let $(p, q)$ be an admissible pair in $Q$ with $l(q)-l(p)=l$ for some $l\in\Z$.
Then for each $\b\in Q_1$, we have that $(\p^{l}\circ\psi_{\b})(p^{*}q)=\psi(p^{*}q\b)$.
From this, we conclude that $(\p^l\c \psi_{\b})(a)=\psi(a\b)$ for any element $a\in B^l$.
\end{lem}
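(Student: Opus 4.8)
The plan is to establish the identity first for a single basis vector $p^{*}q$ of $B=L_k(Q)^{\rm op}$ and then pass to a general $a\in B^{l}$ by $k$-linearity, since both $a\mapsto(\p^{l}\c\psi_{\b})(a)$ and $a\mapsto\psi(a\b)$ are $k$-linear and the admissible pairs $(p,q)$ with $l(q)-l(p)=l$ index the basis $\La\cap B^{l}$. So fix such a pair. By $(\ref{psipsi})$ we have $\psi_{\b}(p^{*}q)=\delta_{s(q),t(\b)}\b^{\s}\z_{(p,q)}$, so the left-hand side equals $\delta_{s(q),t(\b)}\p^{l}(\b^{\s}\z_{(p,q)})$. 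I read $p^{*}q\b$ as the product taken in $L_k(Q)$; since $q\b$ is a concatenation that is nonzero exactly when $s(q)=t(\b)$, both sides vanish automatically unless $s(q)=t(\b)$. From now on I assume $s(q)=t(\b)$, so that $\b^{\s}\z_{(p,q)}$ is a genuine basis vector of $\I^{l}$ sitting in $I_{t(\b)}^{(\Bb^{l}_{t(\b)})}$, and the task reduces to matching $\p^{l}(\b^{\s}\z_{(p,q)})$ with $\psi(p^{*}q\b)$ along the two cases of Definition $\ref{definj}$.

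In the generic (``otherwise'') case I would first verify that $(p,q\b)$ is again admissible: appending $\b$ at the source end of $q$ does not change the two terminal arrows of the pair, and a short inspection of Definition $\ref{da}$, handling separately the degenerate possibilities $l(p)=0$ and $q=e_{i}$, shows that admissibility is preserved precisely when we are not in the first case of the differential. Then $p^{*}q\b=p^{*}(q\b)$ lies in $\La$, so $(\ref{psipsi})$ gives $\psi(p^{*}q\b)=e_{s(\b)}^{\s}\z_{(p,q\b)}$, which is exactly the value of $\p^{l}(\b^{\s}\z_{(p,q)})$ prescribed by the second line of Definition $\ref{definj}$.

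The one delicate point is the first case of the differential, where $q=e_{i}$, $p=\b\widehat{p}$ and $\b$ is special; here $(p,q\b)=(\b\widehat{p},\b)$ is not admissible, so $\psi(p^{*}q\b)$ cannot be read off directly and the product must be rewritten in the basis $\La$. I would compute $p^{*}q\b=(\b\widehat{p})^{*}\b=(\widehat{p})^{*}\b^{*}\b$ and invoke the Cuntz--Krieger relation $(4)$ in the form $\b^{*}\b=e_{s(\b)}-\sum_{\g\in S(\b)}\g^{*}\g$; combined with $(\widehat{p})^{*}e_{s(\b)}=(\widehat{p})^{*}$ and $(\widehat{p})^{*}\g^{*}=(\g\widehat{p})^{*}$, this yields $p^{*}q\b=(\widehat{p})^{*}-\sum_{\g\in S(\b)}(\g\widehat{p})^{*}\g$. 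Each term is a basis element of $\La$, because every $\g\in S(\b)$ differs from the unique special arrow $\b$ and hence makes $(\g\widehat{p},\g)$ admissible, so applying $\psi$ termwise through $(\ref{psipsi})$ produces $e_{s(\b)}^{\s}\z_{(\widehat{p},e_{s(\b)})}-\sum_{\g\in S(\b)}e_{s(\b)}^{\s}\z_{(\g\widehat{p},\g)}$, which agrees with the first line of Definition $\ref{definj}$. The bookkeeping with relation $(4)$, together with checking that the pairs arising in it are admissible, is the main obstacle; everything else is direct substitution, and the general statement for $a\in B^{l}$ then follows by linearity.
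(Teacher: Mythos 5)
Your proof is correct and follows essentially the same route as the paper: the paper isolates your Cuntz--Krieger computation of $p^{*}q\b$ as its displayed formula $(\ref{eq:o})$ and then matches the two cases of Definition $\ref{definj}$ against $\psi$ exactly as you do. The extra admissibility checks you carry out (that $(p,q\b)$ and the pairs $(\g\widehat{p},\g)$ lie in $\La$) are left implicit in the paper but are correct and worth recording.
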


We observe that both $\psi$ and $\psi_{\b}$ are graded $B$-module morphisms. In particular,
we have $\psi_{\b}(p^*q)\in \I^l$ and $\psi(p^*q\b)\in \I^{l+1}$. Here,
$p^*q\b$ is the multiplication of $p^*q$ and $\b$ in $L_k(Q)$.

\begin{proof} We first make an observation.
For each $\b\in Q_1$ and $p^*q\in\La$, we have that
\begin{equation}
\label{eq:o}
p^*q\b=
\begin{cases}
(\widehat{p})^*-\sum_{\g\in S(\b)}{(\widehat{p})}^{*}\g^*\g,& \text{if $l(q)=0$, $p=\b \widehat{p}$ and $\b$ is special};\\
\delta_{s(q), t(\b)}p^*q\b,& \text{otherwise}.
\end{cases}
\end{equation}
Then we have that
\begin{align*}
(\p^{l}\circ\psi_{\b})(p^{*}q)
&=\delta_{s(q), t(\b)}\p^l(\b^{\s}\z_{(p, q)})\\
&=\left\{\begin{array}{ll}
e_{s(\b)}^{\s}\z_{(\widehat{p}, e_{s(\b)})}-\sum\limits_{\g\in S(\b)}e_{s(\b)}^{\s}\z_{(\g\widehat{p}, \g)},& \begin{matrix}
\text{if~~} l(q)=0, ~~p=\b \widehat{p}\\ \text{and~~}\b \text{~~is ~~special};
\end{matrix}\\
\delta_{s(q), t(\b)}e_{s(\b)}^{\s}\z_{(p, q\b)},& \text{otherwise}.
 \end{array}\right.\\
&=\psi(p^{*}q\b)
\end{align*} Here, the second equality uses Definition \ref{definj} of the differential $\p^l$ and the last equality uses (\ref{eq:o}).
\end{proof}

\vskip 10pt

\noindent{\emph{Proof of Proposition \ref{partialaction}.}}
We observe that $\p^l(e_i^{\s}\zeta_{(p, q)})=0$ for each $l\in\Z$.
It follows immediately that $(1)$, $(2)$ and $(3)$ hold for $x=e_i^{\s}$.
Then it suffices to prove $(1)$, $(2)$ and $(3)$ for $x=\aa^{\s}$ with $\aa\in Q_1$.
Recall that $(p, q)\in \Bb^l_i$, and thus $t(\aa)=i$.

For (1), we have that
\begin{equation*}
\begin{split}
\partial^{l}(\alpha^{\s}\zeta_{(p, q)}\cdot e_{j})
&=\partial^{l}(\psi_{\aa}(p^*q)\cdot e_{j})
=(\p^{l}\circ\psi_{\aa})(e_jp^*q)\\
&=\psi(e_{j}p^*q\aa)
=\psi(p^*q\aa)\cdot e_{j}\\
&=\partial^{l}(\alpha^{\s}\zeta_{(p, q)})\cdot e_{j}.\\
\end{split}
\end{equation*} Here, the second and the fourth equalities hold because $\psi_{\aa}$ and $\psi$ are right $B$-module morphisms; the third and the last equalities use Lemma
\ref{partialmodule}. Similar arguments prove $(2)$ and $(3)$.
\hfill $\square$

\section{The differential graded endomorphism algebra of the injective Leavitt complex}
\label{sfifth}

In this section, we prove that the differential graded endomorphism
algebra of the injective Leavitt complex is quasi-isomorphic to the Leavitt path algebra. Here, the Leavitt path algebra is naturally $\Z$-graded and viewed as a dg algebra with trivial differential.

\subsection{The quasi-balanced dg bimodule}
\label{subsection51}

We first recall some notation on quasi-balanced dg bimodules.
Let $A$ be a dg algebra and $M, N$ be (left) dg $A$-modules.
We have a $\Z$-graded vector space ${\rm Hom}_{A}(M, N)=\bigoplus_{n\in\Z}{\rm Hom}_{A}(M, N)^{n}$
such that each component ${\rm Hom}_{A}(M, N)^{n}$ consists of $k$-linear maps
$f:M\xra N$ satisfying $f(M^i)\subseteq N^{i+n}$ for all $i\in\Z$ and $f(a\cdot m)=(-1)^{n|a|}a
\cdot f(m)$ for all homogenous elements $a\in A$.
The differential on ${\rm Hom}_{A}(M, N)$ sends $f\in
{\rm Hom}_{A}(M, N)^{n}$ to $d_{N}\circ f-(-1)^n f\circ d_{M}\in
{\rm Hom}_{A}(M, N)^{n+1}$.
Furthermore, ${\rm End}_{A}(M):={\rm Hom}_{A}(M, M)$ becomes a dg algebra
with this differential and the usual composition as multiplication.
The dg algebra ${\rm End}_{A}(M)$ is usually called the \emph{dg endomorphism algebra} of $M$.

We denote by $A^{\rm opp}$ the \emph{opposite dg algebra} of a dg algebra $A$. More precisely,
$A^{\rm opp}=A$ as graded spaces with the same differential, and the multiplication $``\circ"$
on $A^{\rm opp}$ is given by $a\circ b=(-1)^{|a||b|}ba$.

Let $B$ be another dg algebra.
Recall that a right dg $B$-module is a left dg $B^{\rm opp}$-module.
For a dg $A$-$B$-bimodule $M$, the canonical map
$A\xra {\rm End}_{B^{\rm opp}}(M)$ is a homomorphism of dg algebras, sending $a$ to $l_{a}$ with $ l_{a}(m)=a\cdot m$ for $a\in A$ and $m\in M$.
Similarly, the canonical map
$B\xra {\rm End}_{A}(M)^{\rm opp}$ is a homomorphism of dg algebras,
sending $b$ to $ r_{b}$ with $ r_{b}(m)=(-1)^{|b||m|}m\cdot b$ for homogenous elements $b\in B$ and $m\in M$.

A dg $A$-$B$-bimodule $M$ is called \emph{right quasi-balanced} provided that the canonical
homomorphism $B \xra {\rm End}_{A}(M)^{\rm opp}$ of dg algebras
is a quasi-isomorphism; see \cite[2.2]{cy}.

Denote by $\K(A)$ the homotopy category and by $\D(A)$ the derived category
of left dg $A$-modules; they are triangulated categories with arbitrary coproducts.
For a dg $A$-$B$-bimodule
$M$ and a left dg $A$-module $N$, ${\rm Hom}_{A}(M, N)$ has a natural structure of left dg $B$-module with the graded $B$-module structure given by the formula $(bf)(m) = (-1)^{|b|(|f|+|m|)}f(mb)$, where $b\in B$, $f\in {\rm Hom}_{A}(M, N)$ and $m\in M$.

The following lemma is \cite[Proposition 2.2]{cy}; compare \cite[4.3]{ke} and \cite[Appendix A]{kr}.

\begin{lem} \label{equiv} Let $M$ be a dg $A$-$B$-bimodule which is right quasi-balanced.
Recall that ${\rm Loc}\langle M\rangle\subseteq\K(A)$ is the smallest triangulated
subcategory of
$\K(A)$ which contains $M$ and is closed under arbitrary coproducts. Assume that
$M$ is a compact object in ${\rm Loc}\langle M\rangle$.
Then we have a triangle equivalence \begin{equation*}{\rm Hom}_{A}(M,-):{\rm Loc}\langle M\rangle \stackrel{\sim}\longrightarrow \D(B).\tag*{$\square$}\end{equation*}
\end{lem}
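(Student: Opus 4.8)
The plan is to show that the triangle functor $F=\Hom_{A}(M,-)\colon \K(A)\to\D(B)$ (the dg-Hom functor followed by the localization $\K(B)\to\D(B)$) restricts to a triangle equivalence on ${\rm Loc}\langle M\rangle$. First I would record the two facts that make this a Morita-type situation: by hypothesis $M$ is a compact generator of ${\rm Loc}\langle M\rangle$, and $B$ is a compact generator of $\D(B)$ with ${\rm Loc}\langle B\rangle=\D(B)$. I would also use throughout that for dg modules one has $\Hom_{\K(A)}(M,X[n])=H^{n}\Hom_{A}(M,X)$, so the homotopy-category Hom-groups are computed by the Hom-complex. The abstract framework is the one of \cite[4.3]{ke} and \cite[Appendix A]{kr}.

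The decisive computation is the value of $F$ on the generator. We have $F(M)=\End_{A}(M)$, and a short sign check shows that its left $B$-module structure $(bf)(m)=(-1)^{|b|(|f|+|m|)}f(mb)$ is exactly restriction of scalars of the regular $\End_{A}(M)^{\rm opp}$-module along the canonical homomorphism $\phi\colon B\to\End_{A}(M)^{\rm opp}$. Since $M$ is right quasi-balanced, $\phi$ is a quasi-isomorphism of dg algebras, hence a quasi-isomorphism of left dg $B$-modules $B\xrightarrow{\sim}F(M)$; thus $F(M)\cong B$ in $\D(B)$. In particular $F(M)$ is compact in $\D(B)$, and the induced map $\Hom_{\K(A)}(M,M[n])=H^{n}\End_{A}(M)\to\Hom_{\D(B)}(F(M),F(M)[n])=H^{n}(B)$ is, via $\phi$, an isomorphism for every $n$. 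I would also check here that $F$ preserves coproducts on ${\rm Loc}\langle M\rangle$: the canonical map $\coprod F(X_{i})\to F(\coprod X_{i})$ becomes, on cohomology, the comparison $\coprod\Hom_{\K(A)}(M,X_{i}[n])\to\Hom_{\K(A)}(M,\coprod X_{i}[n])$, which is bijective because $M$ is compact in ${\rm Loc}\langle M\rangle$; hence the map is a quasi-isomorphism and an isomorphism in $\D(B)$.

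With these in hand the argument is the usual two-step d\'evissage. Fixing the source at $M$, the full subcategory of those $Y\in{\rm Loc}\langle M\rangle$ for which $F\colon\Hom(M,Y[n])\to\Hom(F M,FY[n])$ is bijective for all $n$ is triangulated (five lemma applied to the long exact Hom-sequences) and closed under coproducts (because $M$ is compact in ${\rm Loc}\langle M\rangle$ and $F M\cong B$ is compact in $\D(B)$, while $F$ preserves coproducts); since it contains $M$ it is all of ${\rm Loc}\langle M\rangle$. Fixing now an arbitrary target $Y$, the full subcategory of those $X$ for which $F\colon\Hom(X,Y[n])\to\Hom(FX,FY[n])$ is bijective is again triangulated and closed under coproducts (Hom turns coproducts in the first variable into products, so no further compactness is needed), and by the previous step it contains $M$; hence it is all of ${\rm Loc}\langle M\rangle$. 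This proves $F$ is fully faithful on ${\rm Loc}\langle M\rangle$. Finally, the essential image of a fully faithful coproduct-preserving triangle functor is a localizing subcategory of $\D(B)$; it contains $F(M)\cong B$, and since $B$ generates, it equals $\D(B)$. Therefore $F$ is essentially surjective, and together with full faithfulness it is the asserted triangle equivalence.

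I expect the main obstacle to be the first half of the second paragraph: correctly identifying the left dg $B$-module $F(M)=\End_{A}(M)$ with $B$ in $\D(B)$ and verifying that $F$ realizes the quasi-isomorphism $\phi$ on graded endomorphisms. This is where the right quasi-balanced hypothesis enters and where the sign conventions in the definitions of $\Hom_{A}(M,N)$, of $\End_{A}(M)^{\rm opp}$, and of the $B$-action must be reconciled; once $F(M)\cong B$ and coproduct-preservation are secured, the d\'evissage and the essential-image argument are formal.
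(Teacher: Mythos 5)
Your proof is correct. The paper does not prove this lemma itself --- it quotes it as \cite[Proposition 2.2]{cy}, pointing to \cite[4.3]{ke} and \cite[Appendix A]{kr} --- and your argument is precisely the standard one underlying that citation: identify $F(M)=\End_A(M)$ with $B$ in $\D(B)$ via the quasi-isomorphism $B\to\End_A(M)^{\rm opp}$ (your sign check that the left $B$-action $(bf)(m)=(-1)^{|b|(|f|+|m|)}f(mb)$ is restriction of scalars along this map is the crux and is right), verify coproduct preservation from compactness of $M$, and then run the two-step d\'evissage plus the localizing-essential-image argument.
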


In what follows, $Q$ is a finite quiver without sinks and $A=kQ/J^{2}$ is the corresponding algebra with radical square zero.
Consider $A$ as a dg algebra concentrated on degree zero.
Recall that the opposite Leavitt path algebra $B=L_k(Q)^{\rm op}$ is naturally $\Z$-graded, and that it is viewed as
a dg algebra with trivial differential.

Recall from Proposition \ref{partialaction} that the injective Leavitt complex $\I^{\bu}$ is a dg $A$-$B$-bimodule. The following result establishes a connection between the injective Leavitt complex and the Leavitt path algebra, which justifies the terminology.

\begin{thm} \label{rightq} Let $Q$ be a finite quiver without sinks.  Then the dg $A$-$B$-bimodule
$\mathcal{I}^{\bullet}$ is right quasi-balanced.

In particular, the dg endomorphism algebra ${\rm End}_{A}(\I^{\bu})$ is quasi-isomorphic to the Leavitt path algebra $L_k(Q)$. Here, $L_k(Q)$ is naturally $\Z$-graded and viewed as a dg algebra with trivial differential. \hfill $\square$
\end{thm}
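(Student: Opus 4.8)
The plan is to show that the canonical dg algebra homomorphism $\rho\colon B\xra\End_A(\I^{\bu})^{\rm opp}$, $b\mapsto r_b$, is a quasi-isomorphism. Since $B$ carries the trivial differential, $H^{n}(B)=B^{n}$, so it suffices to prove that $\rho$ induces an isomorphism $B^{n}\xrightarrow{\sim}H^{n}(\End_A(\I^{\bu}))$ for every $n\in\Z$. I would use the standard identification $H^{n}(\End_A(\I^{\bu}))\cong\Hom_{\K(A\-\Inj)}(\I^{\bu},\I^{\bu}[n])$, compute the right-hand side explicitly, and then check that $\rho$ realises the resulting basis. The payoff is that the last sentence of the theorem, that $\End_A(\I^{\bu})$ is quasi-isomorphic to $L_k(Q)$, follows at once.

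First I would reduce the computation to the semisimple module $kQ_{0}$, mirroring the proof of Theorem~\ref{tc}. Applying $\Hom_{\K(A\-\Inj)}(-,\I^{\bu}[n])$ to the triangle \eqref{eq:q}, namely $M^{\bu}\xra\I^{\bu}\xra C^{\bu}\xra M^{\bu}[1]$, produces a long exact sequence in which the two terms flanking $\Hom_{\K(A\-\Inj)}(\I^{\bu},\I^{\bu}[n])$ are $\Hom_{\K(A\-\Inj)}(C^{\bu},\I^{\bu}[n])$ and $\Hom_{\K(A\-\Inj)}(C^{\bu},\I^{\bu}[n+1])$. Both vanish: the shifts $\I^{\bu}[n]$ and $\I^{\bu}[n+1]$ are acyclic by Proposition~\ref{propacy}, while $C^{\bu}\in\mathrm{Loc}\langle\E_1^{\bu}\rangle$ by Proposition~\ref{clocalizing}, and $\Hom_{\K(A\-\Inj)}(\E_1^{\bu},-)$ kills acyclic complexes by Corollary~\ref{cq} together with \cite[Lemma 2.1]{kr} and Lemma~\ref{cc}. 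Hence restriction along $\iota^{\bu}$ gives $\Hom_{\K(A\-\Inj)}(\I^{\bu},\I^{\bu}[n])\cong\Hom_{\K(A\-\Inj)}(M^{\bu},\I^{\bu}[n])$. Since $M^{\bu}$ is the injective resolution of $kQ_{0}$ by Lemma~\ref{injres}, \cite[Lemma 2.1]{kr} then yields $\Hom_{\K(A\-\Inj)}(M^{\bu},\I^{\bu}[n])\cong\Hom_{\K(A\-\Mod)}(kQ_{0},\I^{\bu}[n])$.

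Next I would compute $\Hom_{\K(A\-\Mod)}(kQ_{0},\I^{\bu}[n])=H^{0}(\Hom_A^{\bu}(kQ_{0},\I^{\bu}[n]))$ directly. As $kQ_{0}$ is a stalk complex in degree zero, the cocycles are the $A$-module maps $kQ_{0}\xra\Ke\p^{n}$; by Lemma~\ref{partialker}, $Z^{n}:=\Ke\p^{n}$ is the semisimple submodule $\bigoplus_{i\in Q_0}S_i^{(\Bb^n_i)}$ spanned by the elements $e_i^{\s}\z_{(p,q)}$, where $S_i$ denotes the simple $A$-module at $i$. The coboundaries vanish: any $A$-linear map $kQ_{0}\xra\I^{n-1}$ has semisimple image, hence lands in $\mathrm{soc}\,\I^{n-1}=\Ke\p^{n-1}$ (again Lemma~\ref{partialker}), which is annihilated by $\p^{n-1}$. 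Therefore $\Hom_{\K(A\-\Mod)}(kQ_{0},\I^{\bu}[n])\cong\Hom_A(kQ_{0},Z^{n})=\bigoplus_{i\in Q_0}\Hom_A(S_i,Z^{n})$, and this space has $k$-basis the maps $e_i\mapsto e_i^{\s}\z_{(p,q)}$ indexed by the admissible pairs $(p,q)$ with $l(q)-l(p)=n$. Via $\chi$ this basis is in canonical bijection with the basis $\La\cap B^{n}$ of $B^{n}$.

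Finally I would track $\rho$ through this chain of isomorphisms. The composite carries $[f]$ to the map $e_i\mapsto f(e_i^{\s}\z_{(e_i,e_i)})$, because the quasi-isomorphism $kQ_{0}\xra M^{\bu}$ of Lemma~\ref{injres} sends $e_i$ to $e_i^{\s}\z_{(e_i,e_i)}$. For $f=r_{p^*q}$ the Koszul sign is trivial (the argument has degree zero), and Lemma~\ref{fact}(1) together with the $A$-linearity of $r_{p^*q}$ gives $e_i^{\s}\z_{(e_i,e_i)}\cdot p^*q=\delta_{i,s(q)}\,e_{s(q)}^{\s}\z_{(p,q)}$; thus $\rho(p^*q)$ is carried precisely to the basis vector attached to $(p,q)$. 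Hence $\rho$ matches $\La\cap B^{n}$ bijectively with the computed basis of $H^{n}(\End_A(\I^{\bu}))$, so $H^{n}(\rho)$ is an isomorphism for every $n$; therefore $\I^{\bu}$ is right quasi-balanced and $\End_A(\I^{\bu})$ is quasi-isomorphic to $L_k(Q)$. The main obstacle is bookkeeping rather than conceptual: one must verify that the coboundaries genuinely vanish and, above all, that $\rho$ is transported to evaluation at the cocycles $e_i^{\s}\z_{(e_i,e_i)}$, which is exactly where the explicit module structure of Lemma~\ref{fact} (pinning the index to $i=s(q)$ through $A$-linearity) does the essential work.
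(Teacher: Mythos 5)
Your proof is correct, but it takes a genuinely different route from the paper's. The paper argues directly at the level of the dg endomorphism algebra: injectivity of $H(\rho)$ is deduced from the graded uniqueness theorem for Leavitt path algebras (Lemma \ref{uniquethm}) by checking that $\rho(e_i)\notin C^0$, and surjectivity is established by a long explicit computation --- given a cocycle $y$ one extracts $x=\sum_{j}\nu_{(e_j,e_j)}$, proves $\nu_{(p,q)}=(-1)^{nl}xp^{*}q$ by induction on path lengths, and then builds by hand a homotopy $h$ out of auxiliary elements $\o_{(p,q)}$ to show $y-\rho(x)\in C^n$ (Lemmas \ref{rule}, \ref{lateruse} and \ref{last}). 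You instead recycle the Section \ref{sthree} machinery: the triangle \eqref{eq:q} together with $\Hom_{\K(A\-\Inj)}(C^{\bu},\I^{\bu}[m])=0$ for acyclic targets (Proposition \ref{clocalizing}, Corollary \ref{cq}, Lemma \ref{cc}) and Krause's lemma reduce $H^{n}(\End_A(\I^{\bu}))\cong\Hom_{\K(A\-\Inj)}(\I^{\bu},\I^{\bu}[n])$ to $\Hom_A(kQ_0,\Ke\p^{n})$, which is a socle computation whose basis is indexed by the admissible pairs of weight $n$, i.e.\ by $\La\cap B^{n}$; Lemma \ref{fact} then shows $\rho$ carries $\La\cap B^{n}$ bijectively onto that basis. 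Your route is shorter and avoids both Tomforde's uniqueness theorem and the intricate homotopy construction, at the price of making Section \ref{sfifth} depend on the compact-generation results of Section \ref{sthree} (in the paper the two arguments are logically independent) and of the compatibility check that $\rho$ is transported to evaluation at the cocycles $e_i^{\s}\z_{(e_i,e_i)}$ --- which you do carry out, including the key point that the coboundaries vanish because any $A$-map from $kQ_0$ lands in $\mathrm{soc}\,\I^{n-1}=\Ke\p^{n-1}$.
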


We will prove Theorem \ref{rightq} in subsection \ref{subsection52}.
The following triangle equivalence has been proved by \cite[Theorem 6.1]{cy}.

\begin{thm}\label{coreq} Let $Q$ be a finite quiver without sinks. Then there is a triangle
equivalence $${\rm Hom}_{A}(\I^{\bullet},-):\K_{\rm ac}(A\-\Inj) \stackrel{\sim}\longrightarrow\D(B),$$
which sends $\I^{\bu}$ to $B$ in $\D(B)$.
\end{thm}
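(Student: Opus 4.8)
The plan is to deduce Theorem \ref{coreq} from Lemma \ref{equiv}, applied to the dg $A$-$B$-bimodule $M=\I^{\bu}$. That lemma has three inputs: that $\I^{\bu}$ is right quasi-balanced, that $\I^{\bu}$ is a compact object of ${\rm Loc}\langle \I^{\bu}\rangle\subseteq\K(A)$, and — in order to match the source category of the asserted equivalence — the identification of ${\rm Loc}\langle\I^{\bu}\rangle$ with $\K_{\rm ac}(A\-\Inj)$. Here $A$ is regarded as a dg algebra concentrated in degree zero, so that $\K(A)$ is nothing but the homotopy category $\K(A\-\Mod)$ of complexes of $A$-modules, and $\I^{\bu}$ lives in the chain of subcategories $\K_{\rm ac}(A\-\Inj)\subseteq\K(A\-\Inj)\subseteq\K(A\-\Mod)$.

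The first point I would establish is that $\K_{\rm ac}(A\-\Inj)$ is a triangulated subcategory of $\K(A\-\Mod)$ closed under arbitrary coproducts. Indeed, coproducts of acyclic complexes are acyclic because cohomology commutes with coproducts, and since $A$ is a finite dimensional, hence Noetherian, algebra, arbitrary direct sums of injective $A$-modules are again injective; thus $\K_{\rm ac}(A\-\Inj)$ is a localizing subcategory of $\K(A)$ containing $\I^{\bu}$. Consequently coproducts formed inside $\K_{\rm ac}(A\-\Inj)$ coincide with those in $\K(A)$, so the smallest localizing subcategory of $\K(A)$ generated by $\I^{\bu}$ is the same as the one generated inside $\K_{\rm ac}(A\-\Inj)$. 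By Theorem \ref{tc}, $\I^{\bu}$ is a compact generator of $\K_{\rm ac}(A\-\Inj)$, which by definition means $\K_{\rm ac}(A\-\Inj)={\rm Loc}\langle\I^{\bu}\rangle$; in particular $\I^{\bu}$ is a compact object of ${\rm Loc}\langle\I^{\bu}\rangle$.

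It then remains only to invoke Theorem \ref{rightq}, which asserts that the dg $A$-$B$-bimodule $\I^{\bu}$ is right quasi-balanced. With all hypotheses in place, Lemma \ref{equiv} furnishes the triangle equivalence
$${\rm Hom}_A(\I^{\bu}, -): \K_{\rm ac}(A\-\Inj)={\rm Loc}\langle\I^{\bu}\rangle \stackrel{\sim}{\longrightarrow} \D(B).$$
To check that this functor sends $\I^{\bu}$ to $B$, I would look at ${\rm Hom}_A(\I^{\bu},\I^{\bu})={\rm End}_A(\I^{\bu})$ equipped with its left dg $B$-module structure $(bf)(m)=(-1)^{|b|(|f|+|m|)}f(mb)$. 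Evaluating on $f={\rm id}_{\I^{\bu}}$ gives $(b\cdot{\rm id})(m)=(-1)^{|b||m|}mb=r_b(m)$, so the canonical homomorphism $B\to{\rm End}_A(\I^{\bu})^{\rm opp}$, $b\mapsto r_b$, coincides as a map of complexes with the left dg $B$-module map $B\to{\rm End}_A(\I^{\bu})$ sending $1$ to ${\rm id}$. Being a quasi-isomorphism by right quasi-balancedness, it identifies ${\rm End}_A(\I^{\bu})$ with $B$ in $\D(B)$, which is exactly the image of $\I^{\bu}$ under the equivalence.

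The main obstacle — in fact the only step that goes beyond assembling the cited results — is the identification ${\rm Loc}\langle\I^{\bu}\rangle=\K_{\rm ac}(A\-\Inj)$ inside $\K(A)$, which rests on $\K_{\rm ac}(A\-\Inj)$ being closed under coproducts in $\K(A\-\Mod)$; this is precisely where the Noetherianity of $A$ enters, guaranteeing that coproducts of injectives stay injective. Once this is settled, Theorems \ref{tc} and \ref{rightq} feed directly into Lemma \ref{equiv} and the conclusion is immediate.
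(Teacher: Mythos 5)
Your proposal is correct and follows essentially the same route as the paper: invoke Theorem \ref{tc} to get $\K_{\rm ac}(A\text{-}\Inj)={\rm Loc}\langle\I^{\bu}\rangle$ with $\I^{\bu}$ compact, invoke Theorem \ref{rightq} for right quasi-balancedness, and feed both into Lemma \ref{equiv}; the identification of the image of $\I^{\bu}$ with $B$ via the quasi-isomorphism $B\to{\rm End}_A(\I^{\bu})^{\rm opp}$ is also the paper's argument. The extra care you take over coproducts of injectives and the localizing subcategory being computed consistently in $\K(A)$ is a detail the paper leaves implicit, not a difference in approach.
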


\begin{proof} By Theorem \ref{tc} $\I^{\bullet}$ is a compact object in $\K_{\rm ac}(A\-\Inj)={\rm Loc}\langle \I^{\bullet}\rangle$.
Then the triangle equivalence follows from Theorem \ref{rightq} and Lemma \ref{equiv}.
The canonical map $B\xra {\rm End}_{A}(\mathcal{I}^{\bullet})^{\rm opp}$, which is a
quasi-isomorphism, identifies ${\rm Hom}_{A}(\I^{\bullet}, \I^{\bullet})$ with $B$ in $\D(B)$.
\end{proof}

\begin{rmk} 
\begin{enumerate} \item[(1)] We observe that the injective Leavitt complex $\I^{\bu}$ can be also obtained by applying a quasi-inverse of the equivalence in \cite[Theorem 6.1]{cy} to the regular $B$-module $B$, since $B\in \D(B)$ is a compact generator.

\item[(2)] The assumption that $Q$ has no sinks is necessary for Theorem \ref{coreq}. Indeed, if $Q$ has only one vertex without arrows, we have $A\cong B\cong k$. In this case, the category $\K_{\rm ac}(A\-\Inj)$ is trivial while $\D(B)$ is nontrivial.
\end{enumerate}
\end{rmk}

\subsection{The proof of Theorem \ref{rightq}}
\label{subsection52}

We follow the notation in subsection \ref{subsection51}.

\begin{lem} $($\cite[Theorem 4.8]{t}$)$ \label{uniquethm}
Let $A$ be a $\Z$-graded algebra and $\varphi:L_k(Q)\xra A$ be a graded algebra homomorphism with
$\varphi(e_{i})\neq 0$ for all $i\in Q_{0}$. Then $\varphi$ is injective.\hfill $\square$
\end{lem}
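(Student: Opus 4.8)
The plan is to prove that $\Ke\varphi=0$ by pushing everything into the degree-zero subalgebra, where $L_k(Q)$ is locally matricial. Since $\varphi$ preserves the grading, $\Ke\varphi$ is a graded two-sided ideal of $L_k(Q)$, so it suffices to show that $\Ke\varphi$ contains no nonzero homogeneous element. Throughout I use the grading $|e_i|=0$, $|\alpha|=1$, $|\alpha^*|=-1$, and write $*$ for the $k$-linear anti-automorphism of $L_k(Q)$ fixing each $e_i$ and interchanging $\alpha$ with $\alpha^*$, so that $|x^*|=-|x|$ for homogeneous $x$ and $(p^*q)^*=q^*p$.

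First I would treat the degree-zero part $L_k(Q)^0$. For $n\geq 0$ and $w\in Q_0$ let $P_n(w)$ be the finite set of paths of length $n$ ending at $w$, and put $E_{p,q}=p^*q$ for $p,q\in P_n(w)$. Lemma \ref{mul} gives $E_{p,q}E_{r,s}=\delta_{q,r}E_{p,s}$, so $A_n:=\mathrm{span}\{p^*q\mid l(p)=l(q)=n,\ t(p)=t(q)\}$ is isomorphic to $\bigoplus_{w\in Q_0}M_{P_n(w)}(k)$; the Cuntz--Krieger relation (4) rewrites $p^*q=\sum_{s(\beta)=t(p)}(\beta p)^*(\beta q)$, giving unital embeddings $A_n\hookrightarrow A_{n+1}$ whose union is $L_k(Q)^0$ by Lemma \ref{lbasis}. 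To see $\varphi$ is injective on each $A_n$, I use $pp^*=e_{t(p)}$: if $\varphi(p^*p)=0$ then $\varphi(e_{t(p)})^2=\varphi(p)\,\varphi(p^*p)\,\varphi(p^*)=0$, and since $\varphi(e_{t(p)})$ is idempotent this forces $\varphi(e_{t(p)})=0$, contradicting the hypothesis $\varphi(e_{t(p)})\neq 0$. Hence $\varphi$ annihilates no diagonal matrix unit $E_{p,p}$; since any nonzero two-sided ideal of $\bigoplus_w M_{P_n(w)}(k)$ contains a full block and hence some $E_{p,p}$, we get $\Ke\varphi\cap A_n=0$. Taking the union over $n$, $\varphi$ is injective on $L_k(Q)^0$.

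It remains to reduce an arbitrary homogeneous element to degree zero. If $x$ is homogeneous with $\varphi(x)=0$, then $x^*x$ has degree $0$ and $\varphi(x^*x)=\varphi(x^*)\varphi(x)=0$, so $x^*x\in\Ke\varphi\cap L_k(Q)^0$, which is $0$ by the previous paragraph. Thus the whole statement reduces to the \emph{faithfulness of the involution}: the implication $x^*x=0\Rightarrow x=0$ in $L_k(Q)$. I expect this to be the main obstacle, because $A$ carries no involution and the grading alone does not detect it. To settle it I would expand $x=\sum_i\lambda_i p_i^*q_i$ in the standard basis of Lemma \ref{lbasis}, compute $x^*x=\sum_{i,j}\lambda_i\lambda_j\,q_i^*(p_ip_j^*)q_j$ using the multiplication rules of Lemma \ref{mul}, and, after normalizing the non-admissible diagonal terms $q_i^*q_i$ via relation (4), run a leading-term argument on the lengths $l(q_i)$ to show that the off-diagonal contributions cannot cancel a term of extremal length; this forces every $\lambda_i=0$. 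This combinatorial step, governed by the basis of \cite{aajz}, is the only place where the argument is not purely formal.
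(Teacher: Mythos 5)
The paper offers no proof of this lemma --- it is quoted from Tomforde's graded uniqueness theorem --- so your argument has to stand entirely on its own. Your treatment of the degree-zero part is correct and standard: $L_k(Q)^0=\bigcup_{n\geq 0}A_n$ with $A_n\cong\bigoplus_{w\in Q_0}M_{P_n(w)}(k)$, a nonzero two-sided ideal of $A_n$ contains some $p^*p$, and $\varphi(p^*p)=0$ would give $\varphi(e_{t(p)})=\varphi(p(p^*p)p^*)=0$, contradicting the hypothesis. That half of the argument is fine.

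The gap is exactly where you flagged it, and it is not repairable in the form you propose, because the implication $x^*x=0\Rightarrow x=0$ is \emph{false} for the $k$-linear involution. Take $Q$ to be the quiver with one vertex and two loops $\alpha_1,\alpha_2$ as in Example \ref{exm}, and let $k$ contain an element $i$ with $i^2=-1$ (e.g.\ $k=\mathbb{C}$, or $k=\mathbb{F}_2$ with $i=1$). The element $y=\alpha_1^*+i\alpha_2^*$ is homogeneous of degree $-1$ and nonzero by Lemma \ref{lbasis}, yet relation $(3)$ of Definition \ref{defleavitt} gives
\[
y^*y=(\alpha_1+i\alpha_2)(\alpha_1^*+i\alpha_2^*)=\alpha_1\alpha_1^*+i\,\alpha_1\alpha_2^*+i\,\alpha_2\alpha_1^*-\alpha_2\alpha_2^*=e-e=0.
\]
So no leading-term computation on the basis of \cite{aajz} can close this step: the statement you are reducing to is simply wrong, which is why the graded uniqueness theorem is not a formal consequence of the $*$-structure (the target $A$ carries no involution, and the involution on $L_k(Q)$ is not proper over a general field). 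The standard repair --- and what Tomforde actually does --- avoids the involution entirely: one shows directly that a nonzero graded ideal $I=\Ke\varphi$ meets $L_k(Q)^0$ nontrivially, by taking a nonzero homogeneous $x\in I$ of degree $d\neq 0$ and multiplying it on the left and right by paths and ghost paths chosen from the basis expansion of $x$ so as to produce a nonzero element of $I$ of degree $0$; your degree-zero argument then finishes the proof.
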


Let $Z^{n}$ and $C^{n}$ denote the $n$-th cocycle and
coboundary of the dg endomorphism algebra ${\rm End}_{A}(\mathcal{I}^{\bullet})$, respectively.

\begin{lem} \label{coboundary} Any element $f:\mathcal{I}^{\bullet}\xra \mathcal{I}^{\bullet}$ in $C^{n}$
satisfies that $f(\mathcal{I}^{l})\subseteq \Ke\partial^{n+l}$ for each integer $l$.
\end{lem}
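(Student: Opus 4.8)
The plan is to rewrite $f$ as a genuine coboundary and then reduce the containment $f(\I^l)\subseteq\Ke\p^{n+l}$ to the vanishing of a single triple composite. Since $f\in C^{n}$ is a coboundary of the dg endomorphism algebra, there is some $g\in\Hom_A(\I^{\bu},\I^{\bu})^{n-1}$ with $f=\p\circ g-(-1)^{n-1}g\circ\p$, where $\p=(\p^l)_{l\in\Z}$ is the differential of $\I^{\bu}$. Because $f(\I^l)\subseteq\I^{n+l}$, the desired containment for all $l$ is precisely the assertion that $\p\circ f=0$ as a map of graded spaces. Using $\p\circ\p=0$ I would compute $\p\circ f=\p\circ\p\circ g-(-1)^{n-1}\p\circ g\circ\p=-(-1)^{n-1}\,\p\circ g\circ\p$, so everything reduces to proving $\p\circ g\circ\p=0$.

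The key point, and the only real content of the argument, is that an arbitrary $A$-module morphism is compatible with the ``socle part'' of $\I^{\bu}$ spanned by the $e_i^{\s}$-type basis vectors. Concretely, by Lemma \ref{a} an $A$-module morphism $I_i\to I_j$ sends $e_i^{\s}$ to a scalar multiple of $e_j^{\s}$; since $g$ is an $A$-module morphism of degree $n-1$, it therefore carries each basis vector $e_i^{\s}\z_{(p,q)}$ into the $k$-span of vectors of the form $e_{j}^{\s}\z_{(p',q')}$. In the notation of Lemma \ref{partialker} this means $g\bigl(\mathrm{span}_k\,\G^{m}_0\bigr)\subseteq\mathrm{span}_k\,\G^{m+n-1}_0$ for every $m\in\Z$.

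To conclude I would combine this with Lemma \ref{partialker}, which identifies $\G^{l+1}_0$ as a $k$-basis of $\Im\p^{l}$ and $\G^{l+n}_0$ as a $k$-basis of $\Ke\p^{l+n}$. Then for $x\in\I^{l}$ we have $\p^{l}(x)\in\Im\p^{l}=\mathrm{span}_k\,\G^{l+1}_0$, so $g(\p^{l}(x))\in\mathrm{span}_k\,\G^{l+n}_0=\Ke\p^{l+n}$, and hence $\p^{l+n}\bigl(g(\p^{l}(x))\bigr)=0$. This gives $\p\circ g\circ\p=0$, whence $\p\circ f=0$ and $f(\I^l)\subseteq\Ke\p^{n+l}$ for each $l$. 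The main obstacle is thus entirely localized in the second paragraph: once the stability of the subspaces $\mathrm{span}_k\,\G^{\bu}_0$ under degree-shifting $A$-module maps is extracted from Lemma \ref{a}, the remainder of the proof is purely formal.
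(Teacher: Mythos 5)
Your proposal is correct and follows essentially the same route as the paper: both reduce to the vanishing of $\p^{n+l}\circ h^{l+1}\circ\p^{l}$, both use Lemma \ref{partialker} to identify $\Im\p^{l}$ (equivalently $\Ke\p^{l+1}$) and $\Ke\p^{n+l}$ with the spans of the $e_i^{\s}$-type basis vectors, and both invoke Lemma \ref{a} to see that a degree-shifting $A$-module map preserves these spans. The only cosmetic difference is that you phrase the reduction as $\p\circ f=0$ and work with $\Im\p^{l}$ directly, whereas the paper passes through $h^{l+1}(\Ke\p^{l+1})\subseteq\Ke\p^{n+l}$.
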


\begin{proof} We write $f=(f^l)_{l\in \Z}$. Since $f\in C^{n}$, there exists
$h=(h^l)_{l\in\Z}\in {\rm End}_{A}(\mathcal{I}^{\bullet})^{n-1}$ such that $f^{l}=\partial^{n+l-1} \circ h^l-(-1)^{n-1}h^{l+1}\circ \partial^{l}$ for any $l\in \Z$.
Because of $\Im \p^{n+l-1}\subseteq \Ke \p^{n+l}$ and $\Im \p^{l}\subseteq \Ke \p^{l+1}$,
it suffices to show that
$h^{l+1}(\Ke\partial^{l+1})\subseteq \Ke\partial^{n+l}$. By Lemma \ref{partialker} $\{e_{i}^{\s}\zeta_{(p, q)}\;|\; i\in Q_0, (p, q)\in \Bb^{l+1}_i\}$ is a $k$-basis of $\Ke\p^{l+1}$. Then we are done by
Lemma \ref{a}.
\end{proof}

We denote by
$\rho: B\xra {\rm End}_{A}(\mathcal{I}^{\bullet})^{\rm opp}$ the canonical map
sending $b$ to $\rho(b)$ with $\rho(b)(m)=(-1)^{|b||m|}m\cdot b$ for homogenous elements $b\in B$ and $m\in \I^{\bu}$.
Since $B$ is a dg algebra with trivial differential,
we have that $\rho(B^{n})\subseteq Z^{n}$ for each $n\in \Z$.
Taking cohomologies, we have the graded algebra homomorphism \begin{equation}
\label{coho}
H(\rho):
B\longrightarrow H({\rm End}_{A}(\mathcal{I}^{\bullet})^{\rm opp}).\end{equation}

\begin{prop} \label{embedding} The graded algebra homomorphism $H(\rho)$ is injective.
\end{prop}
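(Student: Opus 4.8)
The plan is to derive injectivity of the graded algebra homomorphism $H(\rho)$ from the uniqueness theorem for Leavitt path algebras, Lemma \ref{uniquethm}. That lemma is stated for $L_k(Q)$, whereas the source of $H(\rho)$ is $B=L_k(Q)^{\rm op}$, so I would first transport the problem: the standard involution $p^*q\mapsto q^*p$ is an anti-automorphism of $L_k(Q)$, hence an algebra isomorphism $L_k(Q)\xrightarrow{\sim}B$ which fixes every vertex idempotent $e_i$ (it only reverses the grading, which is harmless for an injectivity question, up to reversing the grading of the target if one insists on degree-preserving conventions). Composing with $H(\rho)$ produces a graded algebra homomorphism $\varphi\colon L_k(Q)\to H({\rm End}_A(\I^{\bu})^{\rm opp})$ with $\varphi(e_i)=H(\rho)(e_i)$ and with the same triviality of kernel as $H(\rho)$. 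By Lemma \ref{uniquethm}, $\varphi$ — and therefore $H(\rho)$ — is injective as soon as $H(\rho)(e_i)\neq 0$ for every $i\in Q_0$, so this nonvanishing is the only thing requiring proof.

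The real content is thus to show, for each vertex $i$, that the degree-zero cocycle $\rho(e_i)=r_{e_i}$ is \emph{not} a coboundary. By $(\ref{action1})$, $r_{e_i}$ is the idempotent endomorphism $x\zeta_{(p,q)}\mapsto \delta_{i,s(p)}\,x\zeta_{(p,q)}$, i.e. the projection onto those summands indexed by admissible pairs $(p,q)$ with $s(p)=i$. The key leverage is Lemma \ref{coboundary}: every coboundary $f\in C^{0}$ satisfies $f(\I^{l})\subseteq\Ke\partial^{l}$ for all $l$. Hence it suffices to exhibit a single integer $l$ and a single element $v\in\I^{l}$ with $r_{e_i}(v)=v$ but $\partial^{l}(v)\neq 0$; such a $v$ witnesses $r_{e_i}(\I^{l})\not\subseteq\Ke\partial^{l}$, so $\rho(e_i)\notin C^{0}$ and consequently $H(\rho)(e_i)=[\rho(e_i)]\neq 0$. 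Here I would invoke Lemma \ref{partialker}, which identifies $\Ke\partial^{l}$ with the span of the basis vectors $e_j^{\s}\zeta_{(p,q)}$; this guarantees that \emph{any} basis vector of the form $\alpha^{\s}\zeta_{(p,q)}$ automatically lies outside $\Ke\partial^{l}$, reducing the task to producing an $\alpha^{\s}$-type vector fixed by $r_{e_i}$.

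To build $v$ I would use finiteness together with the no-sinks hypothesis. Following arrows forward from $i$ yields an infinite walk that can never stop, so it must repeat a vertex; thus there is a vertex $C$ lying on an oriented cycle and reachable from $i$ by a (possibly trivial) path $p$ with $s(p)=i$ and $t(p)=C$. Since $C$ sits on a cycle, some arrow $\alpha$ satisfies $t(\alpha)=C$. Set $v=\alpha^{\s}\zeta_{(p,\,e_{C})}$, which is a legitimate basis vector because $(p,e_{t(p)})$ is admissible by Definition \ref{da} and $\alpha^{\s}\in I_{C}=I_{s(e_C)}$. Then $r_{e_i}(v)=v$ because $s(p)=i$, while reading the differential in Definition \ref{definj} with $q=e_C$ gives $\partial(v)\neq 0$ in both branches: the non-special branch returns the single basis vector $e_{s(\alpha)}^{\s}\zeta_{(p,\alpha)}$, and the special branch returns a combination whose term $e_{s(\alpha)}^{\s}\zeta_{(\widehat p,\,e_{s(\alpha)})}$ cannot cancel. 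This yields the desired $v$ and completes the argument.

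The main obstacle is precisely this last nonvanishing step. The bookkeeping with the two opposite conventions and the reduction via Lemma \ref{uniquethm} are essentially formal; what carries the weight is combining the explicit combinatorics of $\partial$ with the guaranteed existence of a cycle reachable from $i$ — the point at which the assumption that $Q$ has no sinks is genuinely used — and then converting ``$\partial^{l}(v)\neq0$'' into non-membership in $\Ke\partial^{l}$ through Lemma \ref{partialker}. Once the explicit element $v$ is in hand, the rest follows immediately.
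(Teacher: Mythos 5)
Your proposal is correct and follows essentially the same route as the paper: reduce via Lemma \ref{uniquethm} to showing $\rho(e_i)\notin C^0$ for every vertex $i$, then combine Lemma \ref{coboundary} with Lemma \ref{partialker} and exhibit a basis vector of $\alpha^{\s}$-type that is fixed by $\rho(e_i)$. The only (harmless) difference is that your witness is produced by walking forward to a cycle, which is unnecessary: the paper simply takes an arrow $\alpha$ with $s(\alpha)=i$ (using that $Q$ has no sinks) and uses $\alpha^{\s}\zeta_{(\alpha,e_{t(\alpha)})}\in\I^{-1}$, so the requirement $t(\alpha)=t(p)$ holds automatically with $p=\alpha$.
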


\begin{proof} Recall $B=L_k(Q)^{\rm op}$. By Lemma \ref{uniquethm}, it suffices to prove that $H(\rho)(e_i)\neq 0$, or equivalently,
$\rho(e_i)\notin C^0$ for all $i\in Q_0$.
Since $Q$ has no sinks, for each $i\in Q_0$ there exists an arrow $\aa$ in $Q$ such that $s(\aa)=i$. Observe that $\aa^{\s}\z_{(\aa, e_{t(\aa)})}\in \I^{-1}$.
Then $\rho(e_i)(\aa^{\s}\zeta_{(\aa, e_{t(\aa)})})=\aa^{\s}\zeta_{(\aa, e_{t(\aa)})}\cdot e_i=\aa^{\s}\zeta_{(\aa, e_{t(\aa)})}$. By Lemma \ref{partialker}
we have $\aa^{\s}\zeta_{(\aa, e_{t(\aa)})}\notin \Ke\p^{-1}$.
It follows from Lemma \ref{coboundary} that $\rho(e_i)\notin C^0$.
\end{proof}

We will prove that the graded algebra homomorphism $H(\rho)$ is surjective.
For each $y\in Z^n$, we will find an element $x\in B^n$ with $y-\rho(x)\in C^n$.
The argument is quite involved.

In what follows, we fix $y\in Z^{n}$ for some $n\in \Z$.
Then $y:\I^{\bu}\xra \I^{\bu}$ satisfies $\p^{\bullet} \circ y-(-1)^{n}y\circ \partial^{\bullet}=0$. Recall that $\I^{l}=\bigoplus_{i\in Q_0}I_i^{(\Bb^l_i)}$ for each $l\in \Z$. The set
$\{e_i^{\s}\z_{(p, q)}, \aa^{\s}\z_{(p, q)}\;|\;i\in Q_0, (p, q)\in \Bb^l_i \;\text{and}\; \aa\in Q_1 \;\text{with} \;t(\aa)=i\}$ is a $k$-basis of $\I^{l}$. For each $i\in Q_{0}$ and $(p, q)\in\Bb^l_i$,
we have that
\begin{equation}
\label{eq:l'}
\begin{cases}
(\partial^{n+l}\circ y)(e_{i}^{\s}\zeta_{(p, q)})=0 & \\
(\partial^{n+l}\circ y)(\alpha^{\s}\zeta_{(p, q)})=(-1)^{n}(y\circ \partial^{l})(\alpha^{\s}\zeta_{(p, q)}),&
\end{cases}
\end{equation} where $\alpha \in Q_{1}$ with $t(\aa)=i$.

Observe that $y$ is an $A$-module morphism. By Lemma \ref{a} we may assume that
\begin{equation}\label{eq:par}
\begin{cases}
y(e_{i}^{\s}\zeta_{(p, q)})=\psi(\nu_{(p, q)})\\
y(\alpha^{\s}\zeta_{(p, q)})=\psi(\mu^{\alpha}_{(p, q)})
+\psi_{\alpha}(\nu_{(p, q)})
\end{cases}
\end{equation} for some $\nu_{(p, q)}\in L_k(Q)^{n+l}e_i$ and
$\mu^{\alpha}_{(p, q)}\in L_k(Q)^{n+l}e_{s(\aa)}$. We refer to $(\ref{psipsi})$ for the
right $B$-module morphisms $\psi$ and $\psi_{\aa}$.

By (\ref{eq:par}) and Lemma \ref{partialmodule} we have that \begin{equation*}(\partial^{n+l}\circ y)(\alpha^{\s}\zeta_{(p, q)})
=(\partial^{n+l}\c\psi_{\alpha})(\nu_{(p, q)})=\psi(\nu_{(p, q)}\aa),\end{equation*} and that
\begin{equation*}(y\circ \partial^{l})(\alpha^{\s}\zeta_{(p, q)})
=y((\partial^{l}\c\psi_{\alpha})(p^*q))=y(\psi(p^*q\aa)).\end{equation*}
Here, we implicitly use the fact that $\p^{\bu}\c \psi=0$.
By $(\ref{eq:l'})$, we obtain the equality $\psi(\nu_{(p, q)}\alpha)
=(-1)^{n}y(\psi(p^*q\aa))$.
Recall that $\psi$ is injective. By (\ref{eq:o}) and $(\ref{eq:par})$, we infer that \begin{equation}
\label{eq:l''}
\begin{split}
\nu_{(p, q)}\alpha
&=\begin{cases}
(-1)^{n}(\nu_{(\widehat{p}, e_{s(\alpha)})}-\sum\limits_{\beta\in S(\alpha)} \nu_{(\beta\widehat{p}, \beta)}), &
\begin{matrix}\text{if~} q=e_{i}, ~~p=\alpha \widehat{p}\\ \text{~and~} \alpha \text{~is special};\end{matrix}\\
(-1)^{n}\nu_{(p, q\alpha)},& \text{otherwise}.
\end{cases}
\end{split}
\end{equation} Here, we recall from $(\ref{eq:vv})$ the definition of $S(\aa)$.

\begin{lem} \label{rule} Keep the notation as above. Take                                                                                                                                                     $x=\sum_{j\in Q_{0}}\nu_{(e_{j}, e_{j})}\in B^n=L_k(Q)^{n}$.
Then we have $\nu_{(p, q)}=(-1)^{nl}xp^{*}q$ in $L_k(Q)$ for each $(p, q)\in\Bb^l_i$.
\end{lem}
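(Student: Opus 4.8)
The plan is to establish the closed form in two stages: a horizontal reduction that trivialises the $q$-coordinate, and then an induction on $l(p)$ for pairs of the shape $(p,e_{t(p)})$. Throughout I take the recursion $(\ref{eq:l''})$ as given, since it was already deduced from the cocycle condition and the injectivity of $\psi$.

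First I would record the following one-step relation, read off from the ``otherwise'' branch of $(\ref{eq:l''})$: for any admissible pair $(p,q)$ with $l(q)\geq 1$, writing $q=\widetilde{q}\beta_1$ with $\beta_1$ the initial arrow of $q$, one has $\nu_{(p,q)}=(-1)^n\nu_{(p,\widetilde{q})}\beta_1$. The only way this step could instead fall into the special branch of $(\ref{eq:l''})$ is if $\widetilde{q}=e_{s(\beta_1)}$, $p=\beta_1\widehat{p}$ and $\beta_1$ is special; but then $(p,q)=(\beta_1\widehat{p},\beta_1)$ would violate Definition \ref{da}, contrary to admissibility. Induction on $l(q)$ using this relation, together with $\widetilde{q}\beta_1=q$, propagates the formula: if $\nu_{(p,\widetilde{q})}=(-1)^{n(l-1)}xp^*\widetilde{q}$ then $\nu_{(p,q)}=(-1)^{nl}xp^*q$. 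Hence it suffices to treat the case $l(q)=0$, i.e.\ to prove $\nu_{(p,e_{t(p)})}=(-1)^{nl(p)}xp^*$, noting that $p^*e_{t(p)}=p^*$ and $l=-l(p)$ there.

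I would then prove the case $l(q)=0$ by induction on $m=l(p)$. The base case $m=0$ is immediate, since $\nu_{(e_i,e_i)}\in L_k(Q)^{n}e_i$ forces $xe_j=\nu_{(e_j,e_j)}$, which matches $xe_j^*e_j=xe_j$. For the inductive step, write $p=\gamma_0\widehat{p}$ with terminal arrow $\gamma_0$ and set $s=s(\gamma_0)=t(\widehat{p})$; as $\gamma_0$ issues from $s$, the vertex $s$ is not a sink and carries a special arrow $\delta$. Applying the special branch of $(\ref{eq:l''})$ to the sibling pair $(\delta\widehat{p},e)$ and rewriting each $\nu_{(\beta\widehat{p},\beta)}$ for $\beta\in S(\delta)$ by the one-step relation above (legitimate, since every such $\beta$ is non-special), the signs cancel and the identity collapses to
\begin{equation*}
\sum_{\{\gamma\in Q_1\,\mid\,s(\gamma)=s\}}\nu_{(\gamma\widehat{p},e)}\,\gamma=(-1)^n\nu_{(\widehat{p},e_s)},
\end{equation*}
the sum running over all arrows at $s$ because $\{\delta\}\cup S(\delta)$ is exactly that set. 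Multiplying on the right by $\gamma_0^*$ and invoking the Cuntz--Krieger relation $\gamma\gamma_0^*=\delta_{\gamma,\gamma_0}e_{t(\gamma_0)}$ (relation $(3)$ of Definition \ref{defleavitt}) annihilates every summand except $\gamma=\gamma_0$, leaving $\nu_{(\gamma_0\widehat{p},e)}=(-1)^n\nu_{(\widehat{p},e_s)}\gamma_0^*$. The induction hypothesis $\nu_{(\widehat{p},e_s)}=(-1)^{n(m-1)}x\widehat{p}^*$ and the identity $\widehat{p}^*\gamma_0^*=p^*$ then give $\nu_{(p,e)}=(-1)^{nm}xp^*$, as required.

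The step I expect to be the main obstacle is exactly the extraction of $\nu_{(\gamma_0\widehat{p},e)}$ for an arbitrary terminal arrow $\gamma_0$. The recursion $(\ref{eq:l''})$ supplies only one equation per vertex $s$, relating the entire family $\{\nu_{(\gamma\widehat{p},e)}\}_{s(\gamma)=s}$, so the individual values appear underdetermined by path combinatorics alone; the resolution is that right multiplication by the ghost arrow $\gamma_0^*$, through the Cuntz--Krieger orthogonality $\gamma\gamma_0^*=\delta_{\gamma,\gamma_0}e_{t(\gamma_0)}$, isolates precisely one member of the family. This is the point at which the Leavitt relations are genuinely used. The only remaining care is the bookkeeping of the signs $(-1)^n$ accumulated at each reduction and the equality $l=-l(p)$ in the $l(q)=0$ case, both of which are routine.
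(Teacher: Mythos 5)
Your proposal is correct and follows essentially the same route as the paper's proof: the same one-step reduction in $q$ via the ``otherwise'' branch of $(\ref{eq:l''})$, and for the case $q=e_{t(p)}$ the same vertex identity $\sum_{\{\gamma\;|\;s(\gamma)=s\}}\nu_{(\gamma\widehat{p},e_{t(\gamma)})}\gamma=(-1)^{n}\nu_{(\widehat{p},e_{t(\widehat{p})})}$, resolved by right multiplication with $\gamma_0^{*}$ and relation $(3)$ of Definition \ref{defleavitt}. The only difference is organizational (you reduce $l(q)$ to zero for arbitrary $p$ before inducting on $l(p)$, while the paper treats $l(p)=0$ first), and your explicit check that the admissibility of $(p,q)$ rules out the special branch during the $q$-reduction is a worthwhile detail the paper leaves implicit.
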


\begin{proof} We clearly have $\nu_{(e_{i}, e_{i})}=xe_{i}$ in $L_k(Q)$.
If $l(p)=0$ and $l(q)>0$, by $(\ref{eq:l''})$ we have
$\nu_{(e_{t(q)},q)}=(-1)^{n}
\nu_{(e_{t(q)},\widetilde{q})}\b$ with $q=\widetilde{q}\b$
and $\widetilde{q}$ the truncation of $q$.
By induction on $l(q)$, we obtain that $\nu_{(e_{t(q)},q)}=(-1)^{nl(q)}xq$ in $L_k(Q)$.

If $l(p)>0$, we write $p=\aa \widehat{p}$ in $Q$ with $\aa\in Q_1$.
Let $\beta\in Q_{1}$ with $s(\b)=s(\aa)$. If $\b$ is not special, by $(\ref{eq:l''})$ we have
$\nu_{(\beta\widehat{p}, e_{t(\beta)})}\beta
=(-1)^{n}\nu_{(\b\widehat{p}, \b)}$.
If $\b$ is special, by $(\ref{eq:l''})$ we have
$\nu_{(\beta\widehat{p}, e_{t(\beta)})}\beta
=(-1)^{n}(\nu_{(\widehat{p}, e_{t(\widehat{p})})}-\sum_{\g\in S(\b)}\nu_{(\g\widehat{p}, \g)})$. We
put the two cases together to obtain the following identity $$\sum\limits_{\{\beta\in Q_{1}\;|\;s(\beta)=s(\aa)\}}\nu_{(\beta\widehat{p}, e_{t(\beta)})}\beta
=(-1)^{n}\nu_{(\widehat{p}, e_{t(\widehat{p})})}.$$ Multiplying $\aa^{*}$ from the right and using the relation $(3)$ in Definition \ref{defleavitt}, we obtain
$\nu_{(p, e_{t(p)})}=(-1)^{n} \nu_{(\widehat{p}, e_{t(\widehat{p})})}\aa^{*}$.
By induction on $l(p)$, we have $\nu_{(p, e_{t(p)})}=(-1)^{nl(p)}xp^*$ in $L_k(Q)$.
This proves the case with $l(p)>0$ and $l(q)=0$.

If $l(p)>0$ and $l(q)>0$, we have $\nu_{(p, q)}=(-1)^{n l(q)}\nu_{(p, e_{t(p)})}q$ by iterating $(\ref{eq:l''})$. Then we are done by $\nu_{(p, e_{t(p)})}=(-1)^{n l(p)}xp^*$.
\end{proof}

We will construct a map $h:\I^{\bu}\xra \I^{\bu}$ of degree $n-1$,
which will prove that $y-\rho(x)\in C^n$; see Lemma \ref{last}.
For the construction of $h$, we assign to each admissible pair
$(p, q)$ an element $\o_{(p, q)}$ in $L_k(Q)$.

For each vertex $i\in Q_0$, set $\o_{(e_{i}, e_{i})}=\sum_{\{\beta\in Q_{1}\;|\; s(\beta)=i\}}\mu^{\beta}_{(\beta, e_{t(\beta)})}\in L_k(Q)^{n-1}e_i$.
Here, we recall from (\ref{eq:par}) the element $\mu^{\beta}_{(\beta, e_{t(\beta)})}$.
We define $\o_{(p, e_{t(p)})}$ inductively by \begin{equation}\label{admpair1}\o_{(p, e_{t(p)})}=(-1)^{n-1}\o_{( \widehat{p}, e_{t(\widehat{p})})}
\g^{*}+\sum\limits_{\{\beta\in Q_{1}\;|\; s(\b)=s(\g)\}
}\mu^{\b}_{(\b \widehat{p}, e_{t(\b)})}\g^{*}\in L_k(Q)^{n+l-1}e_{t(p)},
\end{equation} where the path
$p =\g \widehat{p}$ with $\g\in Q_1$ is of length $-l\geq 1$.
Let $(p, q)$ be an admissible pair in $Q$ with $l(q)>0$.
We define $\o_{(p, q)}$ by induction on the length of $q$ as follows \begin{equation}\label{admpair22}\o_{(p, q)}=(-1)^{n-1}(\o_{(p, \widetilde{q})}\d-\mu^
{\d}_{(p,\widetilde{q})})\in L_k(Q)^{n+l-1}e_{s(q)},
\end{equation} where the path $q=\widetilde{q}\d$ with $\d\in Q_1$ is of length $l+l(p)$.

\begin{lem}\label{lateruse} Let $(p, q)$ be an admissible pair in $Q$. For $\aa\in Q_1$ with $t(\aa)=s(q)$, we have
\begin{equation*}
\o_{(p, q)}\alpha-\mu^{\alpha}_{(p, q)}
=\begin{cases}
(-1)^{n-1}(\o_{(\widehat{p}, e_{t(\widehat{p})})}-\sum\limits_{\beta\in S(\alpha)}\o_{(\beta\widehat{p}, \beta)}), &
\begin{matrix}\text{if~} l(q)=0, ~~p=\alpha \widehat{p}\\ \text{~and~} \alpha \text{~is special};\end{matrix}\\
(-1)^{n-1}\o_{(p, q\alpha)},& \text{otherwise}.
\end{cases}
\end{equation*}
\end{lem}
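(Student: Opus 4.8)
The plan is to verify the identity directly, by unwinding the recursive definitions $(\ref{admpair1})$ and $(\ref{admpair22})$ of the elements $\o_{(p,q)}$ and invoking the Cuntz--Krieger relation $(4)$ of Definition \ref{defleavitt}; both sides are elements of $L_k(Q)$, so this is a purely algebraic computation. First I would observe that the two cases of the statement are governed by whether $(p, q\aa)$ is admissible. The hypothesis $t(\aa)=s(q)$ makes $q\aa$ a path with $t(q\aa)=t(q)=t(p)$, and one checks from Definition \ref{da} that $(p, q\aa)$ fails to be admissible exactly when $l(q)=0$, $p=\aa\widehat{p}$, and $\aa$ is special, i.e.\ precisely in the first case.

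In the \emph{otherwise} case, $q\aa$ has starting arrow $\aa$, so its truncation is $\widetilde{q\aa}=q$ and the distinguished arrow in $(\ref{admpair22})$ is $\aa$. Applying $(\ref{admpair22})$ to the admissible pair $(p, q\aa)$ gives $\o_{(p, q\aa)}=(-1)^{n-1}\bigl(\o_{(p,q)}\aa-\mu^{\aa}_{(p,q)}\bigr)$, and multiplying by $(-1)^{n-1}$ yields the claim. The only point to note is that this is valid whether $l(q)>0$ or $l(q)=0$; in the latter case $q\aa=\aa$ and $\widetilde{q\aa}=e_{t(\aa)}=q$, and the admissibility of $(p,\aa)$ is exactly the statement that we are not in the special case.

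The special case is the main obstacle, as it requires combining the signs with relation $(4)$. Writing $p=\aa\widehat{p}$ with $\aa$ special and $q=e_{t(p)}$, I would apply $(\ref{admpair1})$ to each pair $(\g\widehat{p}, e_{t(\g)})$ with $s(\g)=s(\aa)$ and factor out $\g^{*}$ on the right, obtaining $\o_{(\g\widehat{p}, e_{t(\g)})}=W\g^{*}$ for the single common element
\[
W=(-1)^{n-1}\o_{(\widehat{p}, e_{t(\widehat{p})})}+\sum_{\{\b\in Q_1\;|\;s(\b)=s(\aa)\}}\mu^{\b}_{(\b\widehat{p}, e_{t(\b)})}\in L_k(Q)e_{s(\aa)}.
\]
In particular $\o_{(p, e_{t(p)})}=W\aa^{*}$, so the left-hand side equals $W\aa^{*}\aa-\mu^{\aa}_{(p, e_{t(p)})}$. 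On the right-hand side I would expand $\o_{(\b\widehat{p}, \b)}$ by $(\ref{admpair22})$ and then substitute $\o_{(\b\widehat{p}, e_{t(\b)})}=W\b^{*}$, so that the sum over $\b\in S(\aa)$ produces $W\sum_{\b\in S(\aa)}\b^{*}\b$.

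The computation then closes by using relation $(4)$ in the form $\sum_{\b\in S(\aa)}\b^{*}\b=e_{s(\aa)}-\aa^{*}\aa$ together with $We_{s(\aa)}=W$: the $W$-contributions recombine to $W\aa^{*}\aa$, while the $\mu$-terms---after splitting the full sum $\sum_{s(\b)=s(\aa)}$ defining $W$ into the distinguished summand $\b=\aa$ and the summands $\b\in S(\aa)$---cancel all but $-\mu^{\aa}_{(p, e_{t(p)})}$. Thus both sides reduce to $W\aa^{*}\aa-\mu^{\aa}_{(p, e_{t(p)})}$. The delicate part of the argument is exactly this sign-and-bookkeeping cancellation, where relation $(4)$ is what turns the single product $\aa^{*}\aa$ arising from the left-hand side into the complementary sum appearing on the right.
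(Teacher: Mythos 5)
Your proof is correct and takes essentially the same route as the paper's: the ``otherwise'' case is read off directly from $(\ref{admpair22})$ applied to the admissible pair $(p,q\alpha)$, and the special case combines $(\ref{admpair1})$, $(\ref{admpair22})$ and relation $(4)$ of Definition \ref{defleavitt} exactly as the paper does, your common factor $W$ merely packaging the right factor that the paper's chain of equalities carries along term by term.
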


\begin{proof} In the first case, we have \begin{equation*}
\begin{split}
&\o_{(\widehat{p}, e_{t(\widehat{p})})}-\sum\limits_{\b\in S(\aa)}\o_{(\b \widehat{p}, \b)}\\
=&\o_{(\widehat{p}, e_{t(\widehat{p})})}-(-1)^{n-1}\sum\limits_{\b\in S(\aa)}(\o_{(\b \widehat{p}, e_{t(\b)})}\b-\mu^{\b}_{(\b \widehat{p}, e_{t(\b)})})\\
=&\o_{(\widehat{p}, e_{t(\widehat{p})})}-\sum\limits_{\b\in S(\aa)}\o_{(\widehat{p}, e_{t(\widehat{p})})}\b^*\b\\
&-(-1)^{n-1}\sum\limits_{\b\in S(\aa)}(\sum\limits_{\{\g\in Q_1\;|\;s(\g)=s(\b)\}}\mu^{\g}_{(\g \widehat{p}, e_{t(\g)})}\b^*\b-\mu^{\b}_{(\b \widehat{p}, e_{t(\b)})})\\
=&\o_{(\widehat{p}, e_{t(\widehat{p})})}\aa^*\aa+(-1)^{n-1}(\sum\limits_{\{\b\in Q_1\;|\;s(\b)=s(\aa)\}}\mu^{\b}_{(\b\widehat{p}, e_{t(\b)})}\aa^*\aa-\mu^{\aa}_{(p, q)})\\
=&(-1)^{n-1}(\o_{(\aa \widehat{p}, e_{t(\aa)})}\aa-\mu^{\aa}_{(\aa \widehat{p}, e_{t(\aa)})}).
\end{split}
\end{equation*} Here, the first equality uses $(\ref{admpair22})$,
the second and the last equalities use $(\ref{admpair1})$
for the admissible pairs
$(\b\widehat{p}, e_{t(\b)})$ and $(\aa \widehat{p}, e_{t(\aa)})$, respectively,
and the third equality uses the relation $(4)$ in Definition \ref{defleavitt}.
The second case follows directly from $(\ref{admpair22})$.
\end{proof}

We define a $k$-linear map $h:\mathcal{I}^{\bullet}\xra\I^{\bu}$ such that
\begin{equation}\label{hdef}
h(e_{i}^{\s}\zeta_{(p, q)})=
\psi(\omega_{(p, q)})
\text{\qquad and \qquad}h(\alpha^{\s}\zeta_{(p, q)})
=\psi_{\alpha}(\o_{(p, q)})
\end{equation} for each $i\in Q_{0}$, $l\in\Z$, $(p, q)\in\Bb^l_i$ and $\alpha\in Q_{1}$ with $t(\aa)=i$.
By Lemma \ref{a} the map $h$ is a left $A$-module morphism.
We observe that $h$ is of degree $n-1$, and thus $h\in \End_A(\I^{\bu})^{n-1}$.

\begin{lem}\label{last} Let $x$ be the element in Lemma \ref{rule} and $h$ be the morphism given by $(\ref{hdef})$. For each $i\in Q_{0}$, $l\in\Z$ and $(p, q)\in\Bb^l_i$, we have that
\begin{equation*}
\begin{cases}(y-\rho(x))(e_{i}^{\s}\zeta_{(p, q)})
=(\p^{n+l-1}\c h-(-1)^{n-1}h\c \p^l)(e_{i}^{\s}\zeta_{(p, q)})=0\\
(y-\rho(x))(\alpha^{\s}\zeta_{(p, q)})
=(\p^{n+l-1} \circ h-(-1)^{n-1}h
\circ \p^l)(\alpha^{\s}\zeta_{(p, q)}),
\end{cases}
\end{equation*} where $\aa$ is an arrow with $t(\aa)=i$.
\end{lem}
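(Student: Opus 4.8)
The plan is to verify the two displayed identities by evaluating each side on the basis vectors $e_i^{\s}\z_{(p,q)}$ and $\aa^{\s}\z_{(p,q)}$ of $\I^l$ directly. The conceptual point is that the right-hand expression is precisely $(\p^{\bu}\c h-(-1)^{n-1}h\c\p^{\bu})$ applied to these generators, that is, the value of the differential $d(h)$ of $h$ in the dg algebra $\End_A(\I^{\bu})$; hence once both identities are established we will have $y-\rho(x)=d(h)\in C^n$, which together with Proposition \ref{embedding} shows that $H(\rho)$ is bijective and thereby completes the proof of Theorem \ref{rightq}.

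First I would dispose of the generator $e_i^{\s}\z_{(p,q)}$, for which both claimed values are zero. Since $\p^l(e_i^{\s}\z_{(p,q)})=0$ by Definition \ref{definj} and $\p^{\bu}\c\psi=0$, the expression $(\p^{n+l-1}\c h-(-1)^{n-1}h\c\p^l)(e_i^{\s}\z_{(p,q)})=\p^{n+l-1}(\psi(\o_{(p,q)}))$ vanishes. For the other side I would use $e_i^{\s}\z_{(p,q)}=\psi(p^*q)$ from $(\ref{psipsi})$ (as $s(q)=i$) together with the fact that $\psi$ is a right $B$-module morphism to compute $\rho(x)(e_i^{\s}\z_{(p,q)})=(-1)^{nl}\psi(p^*q)\cdot x=(-1)^{nl}\psi(xp^*q)$, and compare with $y(e_i^{\s}\z_{(p,q)})=\psi(\nu_{(p,q)})=(-1)^{nl}\psi(xp^*q)$, where the last equality is Lemma \ref{rule}. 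Thus $(y-\rho(x))(e_i^{\s}\z_{(p,q)})=0$.

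The substantive case is the generator $\aa^{\s}\z_{(p,q)}$. On the left, $y(\aa^{\s}\z_{(p,q)})=\psi(\mu^{\aa}_{(p,q)})+\psi_{\aa}(\nu_{(p,q)})$ by $(\ref{eq:par})$, while writing $\aa^{\s}\z_{(p,q)}=\psi_{\aa}(p^*q)$ and invoking Lemma \ref{rule} gives $\rho(x)(\aa^{\s}\z_{(p,q)})=(-1)^{nl}\psi_{\aa}(xp^*q)=\psi_{\aa}(\nu_{(p,q)})$; the two $\psi_{\aa}(\nu_{(p,q)})$ terms cancel and the left-hand side reduces to $\psi(\mu^{\aa}_{(p,q)})$. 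On the right, Lemma \ref{partialmodule} gives $\p^{n+l-1}(h(\aa^{\s}\z_{(p,q)}))=\p^{n+l-1}(\psi_{\aa}(\o_{(p,q)}))=\psi(\o_{(p,q)}\aa)$. For the remaining term I would substitute the explicit formula for $\p^l(\aa^{\s}\z_{(p,q)})$ from Definition \ref{definj} and apply $h$ via $(\ref{hdef})$; the two cases of Definition \ref{definj} align with the two cases of Lemma \ref{lateruse}, yielding $h(\p^l(\aa^{\s}\z_{(p,q)}))=(-1)^{n-1}\psi(\o_{(p,q)}\aa-\mu^{\aa}_{(p,q)})$ in either case. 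Hence the right-hand side equals $\psi(\o_{(p,q)}\aa)-\psi(\o_{(p,q)}\aa-\mu^{\aa}_{(p,q)})=\psi(\mu^{\aa}_{(p,q)})$, matching the left-hand side.

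The main obstacle is sign bookkeeping rather than any conceptual difficulty: the factor $(-1)^{nl}$ produced by $\rho$ must cancel against the sign in Lemma \ref{rule}, and the factor $(-1)^{n-1}$ carried by the differential $d(h)$ must cancel against the identical factor in Lemma \ref{lateruse}. The genuinely delicate input, namely that $\o_{(p,q)}$ was defined so that $\o_{(p,q)}\aa-\mu^{\aa}_{(p,q)}$ obeys exactly the recursion dictated by $\p^{\bu}$, has already been isolated in Lemma \ref{lateruse}; so the remaining care is simply to align its case distinction with that of Definition \ref{definj} and to track the signs faithfully.
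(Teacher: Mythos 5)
Your proposal is correct and follows essentially the same route as the paper: reduce both identities to the generators via $(\ref{eq:par})$, $(\ref{psipsi})$ and Lemma \ref{rule}, then compute $\p^{n+l-1}\circ h$ by Lemma \ref{partialmodule} and $h\circ\p^l$ by matching the case distinction of Definition \ref{definj} with that of Lemma \ref{lateruse}. The only cosmetic difference is that the paper routes the second term through $h(\psi(p^*q\alpha))$ and $(\ref{eq:o})$ rather than substituting Definition \ref{definj} directly, which amounts to the same computation.
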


\begin{proof} Recall from $(\ref{psipsi})$ the right $B$-module morphisms $\psi$ and $\psi_{\b}$ for $\b\in Q_1$. Because of $\p^l\c \psi=0$ for $l\in\Z$, we have $(\p^{n+l-1}\c h-(-1)^{n-1}h\c \p^l)(e_{i}^{\s}\zeta_{(p, q)})=0$.
By (\ref{eq:par}) and Lemma \ref{rule}, we have
\begin{equation*}
\begin{cases}y(e_{i}^{\s}\zeta_{(p, q)})
=(-1)^{nl}\psi(xp^{*}q)=\rho(x)(e_{i}^{\s}\zeta_{(p, q)})\\
\psi_{\aa}(\nu_{(p, q)})=\rho(x)(\aa^{\s}\zeta_{(p, q)}).
\end{cases}
\end{equation*} It follows that
$$(y-\rho(x))(e_{i}^{\s}\zeta_{(p, q)})=0
=(\p^{n+l-1}\c h-(-1)^{n-1}h\c \p^l)(e_{i}^{\s}\zeta_{(p, q)}),$$
and by (\ref{eq:par}) we have
$(y-\rho(x))(\alpha^{\s}\zeta_{(p, q)})
=\psi(\mu^{\alpha}_{(p, q)})$.

It remains to prove $(\p^{n+l-1}\circ h-(-1)^{n-1}h
\circ \p^l)(\alpha^{\s}\zeta_{(p, q)})=\psi(\mu^{\alpha}_{(p, q)})$.
We have \begin{equation*}
\begin{split}&(\p^{n+l-1}\circ h-(-1)^{n-1}h
\circ \p^l)(\alpha^{\s}\zeta_{(p, q)})\\
&=(\p^{n+l-1} \circ \psi_{\aa})(\o_{(p, q)})-(-1)^{n-1}h((\p^l\c\psi_{\aa})(p^*q))\\
&=\psi(\o_{(p, q)}\aa)-(-1)^{n-1}h(\psi(p^*q\aa))\\
&=\psi(\o_{(p, q)}\aa)-\psi(\o_{(p, q)}\aa-\mu^{\aa}_{(p, q)})\\
&=\psi(\mu^{\aa}_{(p, q)}).
\end{split}
\end{equation*} Here, the second equality uses Lemma \ref{partialmodule} and the third
equality uses $(\ref{eq:o})$ and Lemma \ref{lateruse}.
\end{proof}

\begin{prop} \label{surjprop}The graded algebra homomorphism $H(\rho)$ in (\ref{coho}) is surjective. 
\end{prop}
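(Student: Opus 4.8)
The plan is to obtain surjectivity as an immediate consequence of the explicit computation already performed in Lemma \ref{last}, so that the genuine content of the statement has in fact been assembled in Lemmas \ref{rule} and \ref{last}. Recall that $H(\rho)$ is the map induced on cohomology by the canonical homomorphism $\rho\colon B\to \mathrm{End}_{A}(\mathcal{I}^{\bullet})^{\mathrm{opp}}$ of (\ref{coho}); since $B$ carries the trivial differential, each $x\in B^{n}$ is a cocycle, $\rho(x)\in Z^{n}$, and $H(\rho)$ sends the class of $x$ to $[\rho(x)]$. Thus surjectivity reduces to showing that for every $n$ and every cocycle $y\in Z^{n}$ there is an $x\in B^{n}$ with $y-\rho(x)\in C^{n}$.

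First I would fix $n$ and an arbitrary $y\in Z^{n}$, and take $x=\sum_{j\in Q_{0}}\nu_{(e_{j},e_{j})}\in B^{n}$ as produced in Lemma \ref{rule}. Next I would bring in the degree $(n-1)$ map $h\colon\mathcal{I}^{\bullet}\to\mathcal{I}^{\bullet}$ of (\ref{hdef}); by Lemma \ref{a} this $h$ is a graded $A$-module morphism, so $h\in\mathrm{End}_{A}(\mathcal{I}^{\bullet})^{n-1}$. The key observation is that the expression $\partial^{n+l-1}\circ h-(-1)^{n-1}h\circ\partial^{l}$ occurring on the right-hand side of Lemma \ref{last} is exactly the value of the differential $d(h)$ in the dg endomorphism algebra: the differential sends a map $f$ of degree $n-1$ to $\partial\circ f-(-1)^{n-1}f\circ\partial$, and the opposite dg algebra shares the same differential. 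Since the elements $e_{i}^{\sharp}\zeta_{(p,q)}$ and $\alpha^{\sharp}\zeta_{(p,q)}$ form a $k$-basis of $\mathcal{I}^{\bullet}$, Lemma \ref{last} yields the global identity $y-\rho(x)=d(h)$, whence $y-\rho(x)\in C^{n}$. Therefore $[y]=[\rho(x)]=H(\rho)(x)$, and as every cohomology class is represented by a cocycle, $H(\rho)$ is surjective.

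Because Lemmas \ref{rule} and \ref{last} already carry the burden of the argument, the remaining step is essentially bookkeeping. The only points that require care are the verification that $h$ is well defined as an element of the dg endomorphism algebra and that the sign conventions match, both of which are handled by the construction (\ref{hdef}), the recursion building the $\omega_{(p,q)}$, and Lemma \ref{lateruse}. I expect the genuine difficulty of the whole surjectivity statement to have been concentrated upstream — in guessing the correct formula for $x$ and for the homotopy $h$, and in the inductive identity of Lemma \ref{lateruse} — rather than in this concluding assembly.
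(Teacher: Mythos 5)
Your argument is correct and is essentially identical to the paper's own proof: the paper likewise fixes $y\in Z^{n}$, takes $x=\sum_{i\in Q_{0}}\nu_{(e_{i},e_{i})}$, and invokes Lemma \ref{last} to conclude $y-\rho(x)\in C^{n}$. Your added remark that the right-hand side in Lemma \ref{last} is precisely $d(h)$ for $h\in\mathrm{End}_{A}(\mathcal{I}^{\bullet})^{n-1}$ is exactly the (implicit) bookkeeping the paper relies on.
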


\begin{proof} For any $n\in\Z$, we need to prove that $H^{n}(\rho)$ is surjective. For any element $\overline{y}=y+C^{n}$ with $y\in Z^{n}$,
take $x=\sum_{i\in Q_{0}}\nu_{(e_{i}, e_{i})}\in B^n=L_k(Q)^{n}$. By Lemma \ref{last}, we have $y-\rho(x)\in C^{n}$. Then it follows that $\overline{y}=\overline{\rho(x)}$ in $H^n(\End_A(\I^{\bu})^{\rm opp})$.
\end{proof}

\vskip 10pt

\noindent{\emph{Proof of Theorem \ref{rightq}.}}  By Proposition \ref{embedding} and Proposition \ref{surjprop}, the graded algebra homomorphism $H(\rho): B\xra H({\rm End}_{A}(\mathcal{I}^{\bullet})^{\rm opp})$ in (\ref{coho}) is an isomorphism. Recall that $B=L_k(Q)^{\rm op}$ is a dg algebra with trivial differential. There is a graded algebra isomorphism $B^{\rm op}\xrightarrow[]{\sim}B^{\rm opp}$ sending $b$ to $(-1)^{\frac{n(n+1)}{2}}b$ for $b\in B^n$, where $B^{\rm op}$ is the usual opposite algebra of $B$. Then the dg endomorphism algebra $\End_A(\I^{\bu})$ is quasi-isomorphic to
$B^{\rm opp}$, and also to the Leavitt path algebra $L_k(Q)=B^{\rm op}$.
\hfill $\square$

\section{Acknowledgements} The author thanks her supervisor Professor Xiao-Wu Chen for inspiring discussions and encouragement. This project was supported by the National Natural Science Foundation of China (No.s 11522113 and 11571329). The author also gratefully acknowledges the support of Australian Research Council grant DP160101481.

\vskip 10pt

{\footnotesize\noindent Huanhuan ${\rm Li}^{1}$ \\
1. Centre for Research in Mathematics, Western Sydney University, Australia.\\
E-mail: h.li@westernsydney.edu.au}

\end{document}